\documentclass[10pt, a4paper]{article}


\usepackage[utf8]{inputenc}
\usepackage{amsmath}
\usepackage{amssymb}
\usepackage{enumerate}
\usepackage{amsthm}


\newtheorem{theorem}{Theorem}[section]
\newtheorem{proposition}[theorem]{Proposition}
\newtheorem{lemma}[theorem]{Lemma}
\newtheorem{corollary}[theorem]{Corollary}
\newtheorem{remark}[theorem]{Remark}
\newtheorem{assumption}[theorem]{Assumption}

\newcommand{\R}{\mathbb{R}}
\newcommand{\N}{\mathbb{N}}

\newcommand{\norm}[1]{\left\| #1 \right\|}
\newcommand{\normsup}[1]{\left\|#1\right\|_{\infty}}
\newcommand{\normH}[1]{\left\|#1\right\|_{H^1(1+\rho)}}

\newcommand{\tw}{u^{TW}} 
\newcommand{\twp}{\hat{u}} 


\title{A Multiscale Analysis of Traveling Waves in Stochastic Neural Fields}

\author{Eva Lang\footnotemark[1]}


\begin{document}

\footnotetext[1]{Institut f\"ur Mathematik, Technische Universit\"at Berlin, D-10623 Berlin, Germany (lang@math.tu-berlin.de). The work was supported by the DFG RTG 1845.}

\maketitle

\begin{abstract}

We analyze the effects of noise on the traveling wave dynamics in neural fields. 
The noise influences the dynamics on two scales: first, it causes fluctuations in the wave profile, and second, it causes a random shift in the phase of the wave.  We formulate the problem in a weighted $L^2$-space, allowing us to separate the two spatial scales. By tracking the stochastic solution with a reference wave we obtain an expression for the stochastic phase. We derive an expansion of the stochastic wave, describing the influence of the noise to different orders of the noise strength. To first order of the noise strength, the phase shift is roughly diffusive and the fluctuations are given by a stationary Ornstein-Uhlenbeck process orthogonal to the direction of movement. This also expresses the stability of the wave under noise.

\end{abstract}

%

\pagestyle{myheadings}
\thispagestyle{plain}
\markboth{Eva Lang}{A Multiscale Analysis of Stochastic Traveling Waves}

\allowdisplaybreaks[2]

\section{Introduction}

Since their introduction in \cite{amari} and \cite{wilsoncowan72}, \cite{wilsoncowan73}, neural field equations have been used widely as a phenomenological model for the evolution of the activity in networks of populations of neurons in the continuum limit. While they are of a relatively simple form, they exhibit a variety of interesting spatio-temporal patterns.
For an overview of the many interesting questions associated to and the analysis of neural fields we refer to the books \cite{ermentroutbook}, \cite{bressloffbook}, and \cite{coombesbook}, and to the topical reviews \cite{ermentroutreview}, \cite{bressloffreview}, and \cite{coombeswaves}.

We will here consider the one-dimensional Amari-type neural field equation
\begin{equation}
\label{eq:nfe}
\frac{\partial}{\partial t} u(x,t) = -u(x,t) + \int_{-\infty}^{\infty} w(x-y) F(u(y,t)) dy,
\end{equation}
where $u(x,t)$ is the average membrane potential in the population of neurons located at $x$ at time $t$, the kernel $w$ describes the strength of the synaptic connections, and the gain function $F$ relates the potential to the activity in the population. We will be interested in traveling wave solutions to this equation, which have been proven to exist in \cite{ermentroutmcleod}. 

The communication of neurons is subject to noise and it is therefore interesting to study stochastic versions of neural field equations. Since they have not been derived from single neuron models it is in particular not clear how noise translates from the single neuron level to the level of populations. We will here sum up all possible stochastic influence in an additive Gaussian noise term and consider the stochastic neural field equation
\begin{equation}
\label{eq:snfe}
du(x,t) = \Big(-u(x,t) + \int w(x-y) F(u(y,t)) dy \Big) dt + \epsilon  dW(x,t),
\end{equation}
where $W$ is a $Q$-Wiener process on a suitable function space, and $\epsilon$ is a small parameter describing the strength of the noise.

In this article, it is our main goal to provide a mathematically rigorous analysis of the influence of the noise on the traveling wave dynamics on multiple scales. The term multiscale refers mainly to two different spatial scales: first, shifts in the phase of the wave, that is, displacements of the wave profile from its uniformly translating position, and second, fluctuations in the wave profile. 
In order to determine an expression for the stochastic phase $C(t)$, we track the stochastic solution $u$ with a reference wave profile. Here the phase is defined as a minimizer of an $L^2$-distance between $u$ and all possible translations of the wave profile. Roughly, tracking $u$ is then achieved by dynamically adapting the speed of the reference wave $\hat{u}$ and thereby moving along the gradient of $\|u-\hat{u}(\cdot-C(t))\|$ towards the minimum. This can be seen as a dynamic version of the freezing of traveling waves applied by 
Lord and Thümmler \cite{lordthuemmler}.
We derive an expansion of $u$ of the form
\begin{align*}
	u(x,t) & = \twp(x-\varphi_k(t)) + \epsilon v_0(x,t) + \epsilon^2 v_1(x,t) + ... + \epsilon^k v_{k-1}(x,t) + r_{k}(x,t) \\
	\varphi_k(t) & = ct + \epsilon C_0(t) + \epsilon^2 C_1(t) + ... + \epsilon^k C_{k-1}(t),
\end{align*}
where the coefficients $v_k$ and $C_k$ are independent of $\epsilon$ and where the rest terms $r_k$ are of higher order in $\epsilon$. The term multiscale may thus also refer to the different orders of the noise strength, and the $v_k$ and $C_k$ describe the influence of the noise on the scale $\epsilon^{k+1}$. Here we have separated the two spatial scales: the $C_k$ describe the effects of the noise on the phase, and the $v_k$ on the wave profile. The expansion is valid up to a stopping time $\tau$ which can be shown to be large with high probability converging to $1$ as $\epsilon$ goes to $0$.

An analysis of the properties of the coefficients then allows to describe the effects of the noise. To first order of the noise strength, the phase shift, given by $C_0$, is roughly diffusive. Using the spectral properties of the linearized system we find that the fluctuations are to first order given by a stationary Ornstein-Uhlenbeck process that is orthogonal to the direction of movement, expressing in particular the stability of the traveling wave under the noise.

The question of how noise influences the traveling wave dynamics in neural fields was first considered by Bressloff and Webber in \cite{bressloffwebber}. They identified the two effects and then formally derived a decomposition of the solution. They obtained a stochastic differential equation as an approximate description of the shift of the phase to first order of the noise strength, and found that the noise causes diffusive wandering of the front. 

In \cite{kruegerstannat}, Kr\"{u}ger and Stannat made first steps towards a mathematically rigorous derivation of a decomposition of the solution and our results can be seen as an extension of their work.

The problem is also considered by Inglis and MacLaurin in \cite{inglismaclaurin}. Under assumptions on the spectral properties of the dynamics they analyze the local stability and long-time behavior of the stochastic solution. While we dynamically adapt the speed of the reference wave such that its distance to the stochastic wave becomes minimal, they derive a stochastic differential equation for the phase whose solution realizes the minimum exactly. 

The main novelty of our approach is that we work in a weighted $L^2$-space. 
The density $\rho$ that we introduce seems natural for the analysis in two aspects.
First, it allows us to separate the two spatial scales. The dynamics of the phase and of the fluctuations in the wave profile decouple, such that we can obtain a separate description of the effects of the noise on the two scales (cf. section \ref{subsection:expansion}).
Second, we can describe the spectral properties of the system in the space $L^2(\rho)$ (cf. section \ref{subsection:phaseofthewave}). This will allow us to derive stability properties of the dynamics.

The method of obtaining a description of the stochastic dynamics by separating the dynamics on different spatial (or temporal) scales and approximating to a certain order of $\epsilon$ is related to Bl\"{o}mker's work on amplitude equations (see for example \cite{bloemkercubic} or \cite{bloemkerquadratic}) .

It would also be interesting to consider the problem on larger time-scales. When adding external input to the neural field equation, interesting phenomena, such as stimulus-locking, occur. An investigation of the effects of noise in such situations has been started in \cite{bressloffwebber} and recently continued in \cite{bressloffkilpatrick}. It would be desirable to obtain a mathematically rigorous description, in particular taking care of the different time scales involved. 
We plan to address this question in future work.

Our methods do not rely on the particular neural field and traveling wave setting and it should be possible to extend the results to other types of equations and other types of patterns such as bumps or traveling pulses.

The article is structured as follows. In section \ref{section:setting} we outline the mathematical setting, explain our approach to determining the phase of the wave, and introduce our assumptions on the spectrum. The main results are presented in section \ref{section:multiscaleanalysis}, where we derive the expansion of the solution with respect to the noise strength (Thms. \ref{thm:expansion1}, \ref{thm:expansion2}, \ref{thm:immediaterelax}) and analyze properties of the coefficients (subsection \ref{subsection:immediaterelaxation}). Finally, in section \ref{section:asymptoticbehavior}, we show that our results apply in the case of exponential synaptic decay. In particular we analyze the asymptotic rates of decay of the derivative of the wave $\twp_x$ and the associated adjoint eigenfunction, which are given explicitly as the roots of a third degree polynomial.

\section{Setting}
\label{section:setting}

\subsection{Assumptions on the Parameters}
\label{subsection:parameters}

We denote by $\|\cdot\|$ and $\langle \cdot, \cdot \rangle$ the norm and scalar product in $L^2(\R)$, and for a density $\mu$ we denote by $\|\cdot\|_{\mu}$ and $\langle \cdot, \cdot \rangle_{\mu}$ the norm and scalar product in $L^2(\mu)$, the space of measurable functions $f$ such that $\|f\|_{\mu}^2 = \int f^2(x) \mu(x) dx< \infty$.
Let $H^1$ denote the Sobolev space of functions $h \in L^2(\R)$ that have a weak derivative $h_x \in L^2$ with norm $\|h\|_{H^1} = \big(\|h\|^2 + \|h_x\|^2\big)^{\frac{1}{2}}$.
The norm and scalar product in the Sobolev spaces $H^1$ and $H^1(\mu)$ are denoted by $\|\cdot\|_{H^1}$, $\langle \cdot, \cdot \rangle_{H^1}$ and $\|\cdot\|_{H^1(\mu)}$, $\langle \cdot, \cdot \rangle_{H^1(\mu)}$ , respectively.

As usual, we take the gain function $F:\R \rightarrow [0,1]$ to be a sigmoid function, for example $F(x) = \frac{1}{1+ e^{-\gamma(x-\kappa)}}$ for some $\gamma > 0$, $0<\kappa<1$. In particular we assume that
\begin{enumerate}[(i)]
	\item $F \geq 0, \lim_{x \downarrow -\infty}F(x)=0, \lim_{x\uparrow\infty}F(x)=1$
	\item $ F(x)-x$ has exactly three zeros $0 < a_1 < a < a_2<1$
	\item $F \in \mathcal{C}^k$ for some $k\geq 2$ and the derivatives are bounded
	\item  $F' > 0, F'(a_1) < 1, F'(a_2) < 1, F'(a)>1$
\end{enumerate}
Our assumptions on the synaptic kernel $w$ are the following
\begin{enumerate}[(i)]
	\item $w(x) = \bar{w}(|x|)$ for some $\bar{w} \in \mathcal{C}^1(\R_+, \R_+)$
	\item $\int_{-\infty}^{\infty} w(x) dx = 1$ and $\Big\|\frac{w_x}{w}\Big\|_{\infty} < \infty$
\end{enumerate} 
In Section \ref{section:asymptoticbehavior} on the asymptotic behavior of $\twp_x$ we assume exponential synaptic decay, i.e. $w(x) = \frac{1}{2\sigma} e^{-\frac{|x|}{\sigma}}$ for some $\sigma > 0$.

Assumption (iv) on $F$ implies that $a_1$ and $a_2$ are stable fixed points of (\ref{eq:nfe}), while $a$ is an unstable fixed point.
It has been shown in \cite{ermentroutmcleod} that under these assumptions there exists a unique monotone traveling wave solution to (\ref{eq:nfe}) connecting the stable fixed points (and in \cite{chenfengxin}, that traveling wave solutions are necessarily monotone). That is, there exists a unique wave profile $\hat{u}:\R \rightarrow [0,1]$ and a unique wave speed $c \in \R$ such that $u^{TW}(t,x) := \hat{u}(x-ct)$ is a solution to (\ref{eq:nfe}), i.e.
\begin{equation}
\label{eq:tw}
-c\partial_x \tw_t (x) = \partial_t \tw_t(x)  = -\tw_t(x) + w \ast F(\tw_t)(x),
\end{equation}
and 
\[\lim_{x \rightarrow -\infty} \hat{u}(x) = a_1, \qquad \lim_{x\rightarrow\infty} \hat{u}(x) = a_2.\]  
As also pointed out in \cite{ermentroutmcleod}, we can without loss of generality assume that $c\geq 0$.
Note that by (\ref{eq:tw}), the regularity of $F$ and $w$ determine that of $\twp$. If $F \in \mathcal{C}^k$, $\bar{w}\in \mathcal{C}^1$, then $\twp\in\mathcal{C}^{k+2}$.
Furthermore, $\hat{u}_x \in L^2(\R)$. 

As in \cite{kruegerstannat}, in order to solve (\ref{eq:snfe}), we will consider the stochastic neural field equation for the difference $v= u-\tw$ . In order to be able to `freeze' the wave (cf. subsection \ref{subsection:frozenwave}), we want that
$v(t)\in H^1$.
Let $(W_t)$ be a $Q$-Wiener process on $L^2$ with non-negative, symmetric covariance operator $Q$. We assume that $Q^{\frac{1}{2}}:L^2 \rightarrow H^1$ is Hilbert-Schmidt, that is, for any orthonormal basis $(e_k)$ of $L^2$, $\sum_k \| Qe_k \|_{H^1}^2 < \infty$. $Q^{\frac{1}{2}}$ may for example be given by a symmetric integral kernel, $Q^{\frac{1}{2}}u = \int q(x,y) u(y) dy$, with $\int \|q(x,\cdot)\|_{H^1}^2 dx < \infty$. 
Details on the theory of $Q$-Wiener processes can be found in \cite{daprato} or \cite{prevotroeckner}.

\begin{proposition}
\label{prop:exuni}
For any initial condition $\eta \in H^1$, there exists a unique strong $H^1$-valued solution 
to the stochastic neural field equation
\begin{equation}
\label{eq:snfedifference}
 \begin{split}
	dv(t) & = \big(-v(t) + w\ast(F(\twp(\cdot-ct)+v(t))-F(\twp(\cdot-ct)))\big) dt + \epsilon dW_t,\\
  v(0) & = \epsilon \eta.
\end{split}
\end{equation}
$v$ has a continuous modification and for all $p\geq 1$, $E(\sup_{0\leq t\leq T} \|v(t)\|_{H^1}^p)<\infty$. 
$u(x,t) = \twp(x-ct) + v(x,t)$ is a solution to (\ref{eq:snfe}).
\end{proposition}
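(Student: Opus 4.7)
The plan is to reduce the SPDE for $v$ to a random ODE in $H^1$ by subtracting the stochastic convolution. Define
\[
Z(t) := \int_0^t e^{-(t-s)} dW_s,
\]
which is the mild solution of $dZ = -Z\, dt + dW$. Because $Q^{1/2}: L^2 \to H^1$ is Hilbert-Schmidt and the semigroup $e^{-t}I$ is a trivial contraction on $H^1$, the factorization method (see, e.g., Da Prato--Zabczyk) shows that $Z$ has a continuous modification with paths in $H^1$ and that $E(\sup_{t\leq T} \|Z(t)\|_{H^1}^p) < \infty$ for every $p\geq 1$. Setting $\tilde v(t) := v(t) - \epsilon Z(t)$, the equation (\ref{eq:snfedifference}) becomes the random ODE
\[
\frac{d\tilde v}{dt} = -\tilde v + B(\tilde v + \epsilon Z(t),\, t), \qquad \tilde v(0) = \epsilon \eta,
\]
with $B(v,t) := w\ast\bigl[F(\twp(\cdot-ct)+v) - F(\twp(\cdot-ct))\bigr]$.

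The crux is to verify that $B(\cdot,t): H^1 \to H^1$ is locally Lipschitz with at most linear growth, uniformly in $t$. By Young's inequality, $\|w\ast g\|_{H^1} \leq \|w\|_{L^1}\|g\|_{H^1}$, so it suffices to control $g := F(\twp+v_1) - F(\twp+v_2)$ in $H^1$. The $L^2$-bound is immediate since $F'$ is bounded. For $g_x$ one expands
\[
g_x = \bigl[F'(\twp+v_1) - F'(\twp+v_2)\bigr]\bigl(\twp_x + v_{1,x}\bigr) + F'(\twp+v_2)(v_{1,x}-v_{2,x}),
\]
and uses the 1D Sobolev embedding $H^1(\R) \hookrightarrow L^\infty(\R)$ together with the boundedness of $F''$ and $\twp_x \in L^2$ to bound the first term by $C\|v_1-v_2\|_{H^1}(1+\|v_1\|_{H^1})$. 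Putting these pieces together yields
\[
\|B(v_1,t)-B(v_2,t)\|_{H^1} \leq C\bigl(1+\|v_1\|_{H^1}+\|v_2\|_{H^1}\bigr)\|v_1-v_2\|_{H^1}, \quad \|B(v,t)\|_{H^1} \leq C\bigl(1+\|v\|_{H^1}\bigr).
\]

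With these estimates, I would solve the random ODE pathwise on $[0,T]$: local existence and uniqueness follow from Picard iteration in $C([0,t_0];H^1)$, while the linear growth of $B$ combined with Gronwall's inequality (applied pathwise, using the finiteness of $\sup_{t\leq T}\|Z(t)\|_{H^1}$) rules out blow-up, producing a global $C^1(H^1)$ solution $\tilde v$. Setting $v := \tilde v + \epsilon Z$ returns a continuous $H^1$-valued process which satisfies the mild (and hence, since $A = -I$ is bounded, strong) formulation of (\ref{eq:snfedifference}); the moment bound $E(\sup_{t\leq T}\|v(t)\|_{H^1}^p) < \infty$ is inherited from the corresponding bound on $Z$ via Gronwall. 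Finally, since $\twp(\cdot-ct)$ deterministically satisfies (\ref{eq:tw}), direct substitution shows that $u = \twp(\cdot-ct) + v$ solves (\ref{eq:snfe}).

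The main obstacle is the $H^1$-to-$H^1$ Lipschitz estimate for $B$: controlling the product $[F'(\twp+v_1)-F'(\twp+v_2)]\twp_x$ in $L^2$ forces one to pass through an $L^\infty$-norm of $v_1-v_2$, which is only made available by the (1D-specific) Sobolev embedding and by the assumption $F\in \mathcal{C}^2$ with bounded second derivative. All other steps are standard once this estimate is in hand.
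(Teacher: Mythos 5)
Your argument is correct, but it takes a genuinely different and noticeably heavier route than the paper. The paper observes that for the convolution nonlinearity the derivative can be placed on the kernel rather than on the composed function: since $\partial_x(w\ast g)=w_x\ast g$ and $|w_x|\leq \normsup{\tfrac{w_x}{w}}\,w$ pointwise, one gets $\norm{w\ast g}_{H^1}\leq \big(1+\normsup{\tfrac{w_x}{w}}\big)\norm{g}_{L^2}$, so that $v\mapsto w\ast\big(F(\twp(\cdot-ct)+v)-F(\twp(\cdot-ct))\big)$ is \emph{globally} Lipschitz from $L^2$ (a fortiori from $H^1$) into $H^1$ with constant $\big(1+\normsup{\tfrac{w_x}{w}}\big)\normsup{F'}$; existence, uniqueness, continuity and the $p$-th moment bounds then follow in one stroke from the standard theorem for semilinear SPDEs with Lipschitz coefficients (Da Prato--Zabczyk, Thm.~7.4, with $A=0$). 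You instead differentiate $F(\twp+v_1)-F(\twp+v_2)$ directly, which forces you through the product term $[F'(\twp+v_1)-F'(\twp+v_2)]\twp_x$, the 1D Sobolev embedding, and the hypothesis $F''$ bounded, and leaves you with only a \emph{local} Lipschitz estimate; you then need the subtraction of the stochastic convolution, a pathwise Picard iteration, and a Gronwall continuation argument to recover global existence, uniqueness and the moment bounds. Everything you do is sound --- the a priori linear growth bound does rule out blow-up and does transfer the moments of $Z$ to $v$ --- but the ``main obstacle'' you identify at the end is precisely what the assumption $\normsup{w_x/w}<\infty$ is there to dissolve; exploiting it gives a shorter proof that also carries over verbatim to the weighted space $H^1(1+\rho)$ in Proposition~\ref{prop:exunirho}, where your Sobolev-embedding route would require weighted analogues.
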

\begin{proof}
$B(t,v) := -v + w\ast(F(\twp(\cdot-ct)+v)-F(\twp(\cdot-ct)))$ is Lipschitz continuous in $v$ since for $v_1, v_2 \in H^1$, 
\begin{align*}
	& \norm{w\ast(F(\twp(\cdot-ct)+v_1)-F(\twp(\cdot-ct)+v_2))}_{H^1} \\
	& \leq \Big( 1+ \normsup{\frac{w_x}{w}}\Big) \norm{w\ast(F(\twp(\cdot-ct)+v_1)-F(\twp(\cdot-ct)+v_2))}\\
	& \leq \Big( 1+ \normsup{\frac{w_x}{w}}\Big) \|F'\|_{\infty} \|v_1-v_2\|.
\end{align*}
Now the claim follows for example from Thm. 7.4 in \cite{daprato} (with $A=0$).
\qquad
\end{proof}

$v(t)$ satisfies
\[dv(t) = \big( L_t v(t) + R(t, v(t)) \big) dt + \epsilon dW_t,\]
where the time-dependent linear operator $L_t$ is given as
\begin{equation}
\label{eq:Lt}
L_t v = -v + w\ast (F'(\twp(\cdot-ct))v),
\end{equation}
and
\[R(t,v) = w\ast\big(F(\twp(\cdot-ct)+v)-F(\twp(\cdot-ct))-F'(\twp(\cdot-ct))v\big).\]

\subsection{The frozen wave equation}
\label{subsection:frozenwave}

It will be useful to work in the moving frame picture. That is, 
we can freeze the wave by moving instead the coordinates.
For $h:[0,T] \rightarrow H^1$ set $h^{\#}(x,t):= \Phi_t h(x,t) := h(x+ct, t)$. 
For $g \in H^1$ , by It\^{o}'s lemma,
\begin{align*}
	& \langle  v^{\#}(t), g\rangle  = \langle v(t), g(\cdot-ct)\rangle \\
	& = \langle v(0), g\rangle - c \int_0^t \langle v(s), g_x(\cdot-cs)\rangle ds \\
	& \qquad + \int_0^t \langle L_sv(s)+R(s,v(s)), g(\cdot-cs)\rangle ds  + \epsilon \int_0^t \langle g(\cdot-cs), dW_s\rangle \\
	& = \langle v^{\#}(0), g \rangle + c \int_0^t \langle \partial_x v^{\#}(s), g \rangle ds + \int_0^t \langle - v^{\#}(s) + w\ast(F'(\twp)v^{\#}(s)), g \rangle ds \\
	& \qquad {} + \int_0^t \langle R^{\#}(v^{\#}(s)),g\rangle ds  + \epsilon \int_0^t \langle g, \Phi_s dW_s \rangle \\
	& = \langle v^{\#}(0), g \rangle + \int_0^t \langle L^{\#} v^{\#}(s), g \rangle ds + \int_0^t \langle R^{\#}(v^{\#}(s)), g \rangle ds + \epsilon \int_0^t \langle g, \Phi_s dW_s \rangle ds,
\end{align*}
where $L^{\#}$ is the frozen wave operator given by
\begin{align*}
	L^{\#} v & = -v + c\partial_x v + w\ast(F'(\twp) v) , \\
	\mathcal{D}(L^{\#}) & = H^1,
\end{align*}
and $R^{\#}(v) = w\ast (F(\twp+v)-F(\twp)-F'(\twp)v)$.
That is, $v^{\#}(t)$ is the weak solution in $L^2$ to the frozen wave equation
\[ dv^{\#}(t) = \big(L^{\#} v^{\#}(t) + R^{\#}( v^{\#}(t)) \big) dt + \epsilon \Phi_t dW_t.\]

\noindent Note that $L^{\#}(\twp_x) = 0$. 
The adjoint of $L^{\#}$ is given as
\begin{align*}
	L^{\#,*}v &= -v - c\partial_x v + F'(\twp) w\ast v \\
	\mathcal{D}(L^{\#,*}) &= H^1.
\end{align*}

\begin{proposition}
There exists a unique $\psi \in H^1$, $\psi>0$, such that $L^{\#,*}\psi = 0$.
\end{proposition}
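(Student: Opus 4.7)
The approach combines Fredholm theory for $L^{\#}$ (transferred to $L^{\#,*}$ by duality) with a Perron--Frobenius style argument for positivity.

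First I would show that $L^{\#}: H^1 \to L^2$ is Fredholm of index zero. Write $L^{\#} = (-I + c\partial_x) + K$ with $Kv := w \ast (F'(\twp) v)$. The principal part $-I + c\partial_x$ is a bounded isomorphism from $H^1$ onto $L^2$: its $L^2$-spectrum is the vertical line $-1 + ic\R$, disjoint from $0$. The essential spectrum of $L^{\#}$ is governed by the two asymptotic constant-coefficient operators obtained by replacing $F'(\twp(x))$ with its limits $F'(a_1)$, $F'(a_2)$ at $\pm\infty$. Since $w$ is symmetric with $\int w = 1$, passing to the Fourier side shows that $0$ lies in their spectra only if both $ck = 0$ and $F'(a_{1,2}) = 1$, which is excluded by $c > 0$ together with $F'(a_{1,2}) < 1$. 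Hence $0 \notin \sigma_{\mathrm{ess}}(L^{\#})$ and $L^{\#}$ is Fredholm of index zero.

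Because $L^{\#} \twp_x = 0$ with $\twp_x \in H^1 \setminus \{0\}$, we have $\ker L^{\#} \neq \{0\}$, and the index-zero property gives $\dim \ker L^{\#,*} = \dim \ker L^{\#} \geq 1$. For one-dimensionality I would run a spatial-dynamics / shooting argument: any $v \in \ker L^{\#}$ satisfies $c v_x = v - w \ast (F'(\twp)v)$, and the hyperbolic dichotomy at $\pm\infty$ that follows from the essential-spectrum computation above admits at most a one-parameter family of $L^2$ solutions, already exhausted by $\twp_x$. This yields existence and uniqueness of $\psi$ up to scalar.

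For positivity I would recast $L^{\#,*}\psi = 0$ as the fixed-point problem $\psi = T\psi$ with $T := (I + c\partial_x)^{-1} M_{F'(\twp)} (w \ast \cdot)$. The resolvent $(I + c\partial_x)^{-1}$ is convolution with the non-negative kernel $\tfrac{1}{c} e^{-s/c} \mathbf{1}_{\{s \geq 0\}}$, while multiplication by $F'(\twp) > 0$ and convolution with $w > 0$ strictly preserve positivity, so $T$ is a strictly positivity-preserving operator. A Perron--Frobenius / Krein--Rutman argument then forces the unique (up to scalar) element of $\ker L^{\#,*}$ to have a definite sign, and we normalize $\psi > 0$. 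The main obstacle will be this final positivity step: classical Krein--Rutman requires compactness of the positive operator, which fails for $T$ on $L^2(\R)$ because $F'(\twp)$ does not vanish at $\pm\infty$. I would circumvent this either by passing to a space with exponential weights (consistent with the paper's later $L^2(\rho)$ framework, in which $T$ regains compactness at infinity) or by a direct argument using strict positivity of $T$ to rule out sign changes in elements of the one-dimensional kernel.
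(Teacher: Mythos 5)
Your overall architecture (Fredholm property of index zero for $L^{\#}$, duality to get a one-dimensional kernel for $L^{\#,*}$, then a positivity argument) matches the paper's, but two of your steps have genuine gaps. On the Fredholm part: the paper does not compute essential spectra of asymptotic operators; it splits $w\ast(F'(\twp)\,\cdot\,) = K + B$ with $K$ the integral over $[-M,M]$ (compact) and $\|B\|\leq 1-\delta$, so that $A=-I+c\partial_x+B$ is invertible and $A^{-1}L^{\#}=I+A^{-1}K$ is a compact perturbation of the identity. Your route via asymptotic constant-coefficient operators is standard for differential operators, but for this nonlocal operator the statement that Fredholmness is equivalent to invertibility of the asymptotic operators is itself a nontrivial theorem (the convolution term is not relatively compact with respect to $-I+c\partial_x$ in any obvious way), so you would need to import that machinery; the paper's decomposition is the elementary, self-contained substitute. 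More seriously, your ``spatial-dynamics / shooting'' argument for $\dim\ker L^{\#}=1$ does not transfer to the integro-differential equation $cv_x = v - w\ast(F'(\twp)v)$: there is no finite-dimensional phase space in which to count intersections of stable and unstable subspaces. The paper invokes Thm.~4.2 of Ermentrout--McLeod for simplicity of the kernel; some such external input (or a genuinely nonlocal argument) is required here.

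On positivity: you correctly identify that Krein--Rutman fails for lack of compactness, but neither of your escape routes works as stated. Passing to $L^2(\rho)$ does not make $T=(I+c\partial_x)^{-1}M_{F'(\twp)}(w\ast\,\cdot\,)$ compact, because $F'(\twp)$ tends to the nonzero constants $F'(a_1)$, $F'(a_2)$ at $\pm\infty$; and ``use strict positivity of $T$ to rule out sign changes'' is precisely the point that needs an argument, since strictly positive non-compact operators can in general have sign-changing fixed points. The missing ingredient is the one the paper uses: set $\psi^+=\psi\vee 0$, check pointwise that $L^{\#,*}\psi^+\geq 0$ (on $\{\psi\geq0\}$ because $w\ast\psi^+\geq w\ast\psi$, and on $\{\psi<0\}$ because there $L^{\#,*}\psi^+=F'(\twp)\,w\ast\psi^+\geq0$), and then pair against the known strictly positive kernel element $\twp_x$ of $L^{\#}$: from $0=\langle L^{\#}\twp_x,\psi^+\rangle=\langle\twp_x,L^{\#,*}\psi^+\rangle$ and $\twp_x>0$ one gets $L^{\#,*}\psi^+\equiv0$, hence $\psi^+=\psi$ by one-dimensionality of the kernel, and strict positivity then follows from the fixed-point equation $\psi=T\psi$. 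This duality pairing with $\twp_x$ is the substitute for Krein--Rutman; without it your positivity step is incomplete.
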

\begin{proof}
The proof is similar to that of Thm. 4.2 and 4.3 in \cite{ermentroutmcleod}.
There exist $\delta, M>0$ such that for all $x$ with $|x|\geq M$, $F'(\twp(x)) \leq 1-\delta$.
Consider the operator on $L^2$ \[w\ast(F'(\twp)v) = Kv + Bv \]
where \[Kv(x) = \int_{-M}^{M} w(x-y) F'(\twp(y)) v(y) dy\]
and \[Bv(x) = \bigg( \int_{-\infty}^{-M} + \int_M^{\infty} \bigg) w(x-y) F'(\twp(y)) v(y) dy.\]
Then $\|B\| \leq 1-\delta$ and $K$ is compact.
We have \[L^{\#} - (-I+c\partial_x +B) = K.\]
$A:=-I+c\partial_x + B$ has a bounded inverse.

We have $A^{-1}L^{\#} = I + A^{-1}K$ and $A^{-1}K$ is compact. Thus, $A^{-1}L^{\#}$ is a Fredholm operator with index $0$.
By the Fredholm alternative, since $A^{-1}L^{\#}\twp_x = 0$ and since there are no other eigenfunctions with eigenvalue $0$ as proven in Thm. 4.2 in \cite{ermentroutmcleod}, there exists a unique $\tilde{\psi} \not\equiv 0$ such that $L^{\#,*}(A^*)^{-1}\tilde{\psi}=0$,
hence $L^{\#,*}\psi=0$ where $\psi := (A^*)^{-1} \tilde{\psi} \in H^1$. 

Since $c\psi_x = -\psi + F'(\twp) w\ast\psi$, we actually have $\psi \in \mathcal{C}^1$. 
We show that $\psi$ is of one sign. Assume without loss of generality that there exists $x$ such that $\psi(x) > 0$. Set $\psi^+(x) = \psi(x) \vee 0$.
Then $\psi^+\in H^1$ with $\psi^+_x \equiv \psi_x$ on $\{\psi \geq 0\}$ and $\psi^+_x \equiv 0$ on $\{\psi < 0\}$.
Thus, on $\{\psi \geq 0\}$,  $L^{\#,*}\psi^+ = -\psi -c\psi_x + F'(\twp)w\ast\psi^+ \geq L^{\#,*}\psi(x) = 0$, and on $\{\psi<0\}$, $L^{\#,*}\psi^+ = F'(\twp)w\ast\psi^+ \geq 0$.
Since $\twp_x>0$ and
\[0 = \langle L^{\#} \twp_x, \psi^+ \rangle = \langle \twp_x, L^{\#,*}\psi^+\rangle, \]
it follows that $L^{\#,*}\psi^+ \equiv 0$ and hence $\psi^+ \equiv \psi$. 
\qquad
\end{proof}

We normalize $\psi$ such that $\langle \twp_x, \psi \rangle = 1$.
Set $\rho(x) = \frac{\psi(x)}{\twp_x(x)}$.
Note that for $h \in H^1$,
\begin{equation}
\label{eq:projectionproperty}
\langle L^{\#} h, \twp_x \rangle_{\rho} = \langle L^{\#}h, \psi\rangle = \langle h, L^{\#,*} \psi \rangle = 0,
\end{equation}
that is, $L^{\#}(H^1) \subset \twp_x^{\perp}$, where we denote by $\twp_x^{\perp}$ the orthogonal complement of $\twp_x$ in $L^2(\rho)$.
In $L^2(\rho)$, the direction of movement of the wave $\twp_x$ and the orthogonal directions are thus naturally separated by $L^{\#}$, which makes it a natural choice of function space to work in.

\subsection{The phase of the wave}
\label{subsection:phaseofthewave}
We loosely define the phase $\varphi$ of a `wave-like' function $u$ to be a minimizer of 
\begin{equation}
\label{eq:minphase}
a \mapsto \|u-\twp(\cdot-a)\|_{\rho}, 
\end{equation}
the $L^2(\rho)$-distance between $u$ and all possible translations of the deterministic wave profile $\twp$.

In order to determine the phase shift caused by the noise, we dynamically adapt the phase of a reference wave to match that of the stochastic solution. The idea is to move along the gradient of (\ref{eq:minphase}) towards the minimum.
If we let $a$ depend on a parameter $s$ and differentiate, we obtain
\[\frac{d}{ds} \norm{u-\twp(\cdot-a(s))}_{\rho}^2 = 2 \dot{a}(s) \langle u-\twp(\cdot-a(s)), \twp_x(\cdot-a(s))\rangle_{\rho}.\]
If we now choose $a$ such that $\dot{a}(s) = -\langle  u-\twp(\cdot-a(s)), \twp_x(\cdot-a(s)) \rangle_{\rho}$,\\ then $\frac{d}{ds}  \norm{u-\twp(\cdot-a(s))}_{\rho}^2 \leq 0$, which means $a$ should move towards the right phase.

This motivates the following dynamics which were first introduced in \cite{stannatnagumo} and \cite{stannatreactiondiffusion}.
Let $C^m(t)$ be the solution to the pathwise ordinary differential equation
\begin{equation}
\label{eq:adaptationwavespeed}
\begin{split}
 c^m(t) 
  &:= \dot{C}^m(t)\\
	&: = -m \langle u(t)- \twp(\cdot-ct-C^m(t)), \twp_x(\cdot-ct-C^m(t)) \rangle_{\rho(\cdot-ct-C^m(t))} \\
	& = -m\langle u(t)-\twp(\cdot-ct-C^m(t)), \psi(\cdot-ct-C^m(t)) \rangle,
\end{split}
\end{equation}
which can be shown to exist analogously to \cite{kruegerstannat}, Prop. 3.5.
Here $m>0$ is a parameter that determines the rate of relaxation to the right phase. 

It cannot in general be expected that there exists a unique global minimum of (\ref{eq:minphase}) as discussed in \cite{inglismaclaurin}. Here $C^m$ is designed to follow the local minimum that is closest to the initial phase.

In \cite{inglismaclaurin}, Inglis and MacLaurin derive an SDE describing the dynamics of this local minimum exactly.
Our approach gives an approximate description in terms of an ODE. In particular it provides a way of calculating the phase of the stochastic wave from a realization without explicit knowledge of the noise.

Recall that $\twp_x \in H^1(\rho)$ is an eigenfunction with eigenvalue $0$ of the frozen wave operator $L^{\#}$. We assume that the neural field dynamics is contractive on $\twp_x^{\perp}$.
\begin{assumption}
\label{ass:spectralgap}
$L^{\#}$ has a spectral gap in $L^2(\rho)$, that is, there exists $\kappa > 0$ such that for $h\in H^1(\rho)$,
\begin{equation}
\label{eq:spectralgap}
\langle L^{\#} h, h\rangle_{\rho} \leq - \kappa \big( \norm{h}_{\rho}^2 - \langle h, \twp_x \rangle_{\rho}^2\big).
\end{equation}
\end{assumption}
This assumption reflects that perturbations in the front profile that are orthogonal to the direction of movement $\twp_x$ should be damped by the neural field dynamics, while perturbations in the direction of movement will cause a random shift in the phase of the wave. 
An article containing the proof is in preparation.

Note that in contrast to \cite{kruegerstannat} and \cite{inglismaclaurin}, here we assume the spectral gap in $L^2(\rho)$ instead of $L^2$. To our knowledge, there is so far no proof of the $L^2$-version for arbitrary values of the wave speed $c$.

Under Assumption \ref{ass:spectralgap}, using (\ref{eq:projectionproperty}), $L^{\#}$ generates a contraction semigroup $(P^{\#}_t)$ on $\twp_x^{\perp}$ that satisfies 
\begin{equation}
\label{eq:contractionsg} 
\big\|P^{\#}_t h\big\|_{\rho} \leq e^{-\kappa t}\norm{h}_{\rho}.
\end{equation}

\section{A Multiscale Analysis}
\label{section:multiscaleanalysis}

\subsection{The Measure $\rho$}
\label{subsection:measurerho}

In order to be able to control the $L^2$-norm as well as the $L^2(\rho)$-norm of $v$, we will from now on work in the space $H^1(1+\rho) = H^1 \cap H^1(\rho)$ equipped with the norm $\|h\|_{H^1(1+\rho)} = \Big( \|h\|_{H^1}^2 + \|h\|_{H^1(\rho)}^2\Big)^{\frac{1}{2}}$.
To obtain a solution to (\ref{eq:snfedifference}) in $H^1(1+\rho)$ we adapt our assumptions on the noise.
We assume that $Q^{\frac{1}{2}}$ is Hilbert-Schmidt as an operator from $L^2$ into $H^1(1+\rho)$. $Q^{\frac{1}{2}}$ may for example be given by a symmetric integral kernel, $Q^{\frac{1}{2}}u = \int q(x,y) u(y) dy$, with $\int \norm{q(x,\cdot)}_{H^1(1+\rho)}^2 dx < \infty$. 
Then for any orthonormal basis $(e_k)$ of $L^2$, using Parseval's identity,
\begin{align*}
 \sum_k \|Q^{\frac{1}{2}}e_k\|_{H^1(1+\rho)}^2 
& = \sum_k \int \Big( \int q(x,y)e_k(y)dy \Big)^2 \\
& \qquad  + \Big(\int q_x(x,y) e_k(y) dy\Big)^2 (1+\rho(x)) dx \\
& = \int \big( \|q(x,\cdot)\|^2 + \|q_x(x,\cdot)\|^2 \big) (1+\rho(x)) dx \\
& = \int \|q(\cdot,y)\|_{H^1(1+\rho)}^2 dy < \infty
\end{align*}

We make the following assumptions on $\rho$.
\begin{assumption}
\label{ass:rho}
\begin{enumerate}[(i)]
	\item There exists a constant $L_{\rho}$ such that for all $x\in \R$ and $y>0$,
	\begin{equation}
	\label{eq:rho1}
	\rho(x-y) \leq L_{\rho} \rho(x).
	\end{equation}
	\item There exists a constant $K_{\rho}>0$ such that
		\begin{equation}
		\label{eq:rho2}
			w\ast\rho(x) \leq K_{\rho} \rho(x).
		\end{equation}
\end{enumerate}
\end{assumption}

Condition (i) says that $\rho$ should be roughly increasing. We have an a priori bound on the growth of $\rho$ since
$L^{\#,*}\psi =0$ implies that
\[\psi(x) = \int_0^{\infty} e^{-s} F'(\twp(x-cs)) w\ast\psi(x-cs) ds\]
and thus
\[|\psi_x(x)| \leq \Big( \Big\| \frac{F''(\twp)\twp_x}{F'(\twp)}\Big\| + \Big\| \frac{w_x}{w}\Big\| \Big) \psi(x),\]
and similarly \[|\twp_{xx}(x)| \leq \Big\|\frac{w_x}{w}\Big\| \twp_x(x), \]
such that
\begin{equation}
\label{eq:boundrhox}
|\rho_x| = \Big| \frac{\psi_x}{\psi} - \frac{\twp_{xx}}{\twp_x}\Big| \rho \leq \Big(\Big( \Big\| \frac{F''(\twp)\twp_x}{F'(\twp)}\Big\| + 2 \Big\| \frac{w_x}{w}\Big\| \Big) \rho =:M\rho.
\end{equation}

Condition (ii) says roughly that $\rho$ should neither grow nor decay too quickly relative to $w$. 	It has already been noted in \cite{faugerasinglis} that this is a sufficient condition for the existence of an $L^2(\rho)$-	valued solution.

In section \ref{section:asymptoticbehavior} we prove that the conditions are satisfied in the case of exponential synaptic decay.

\begin{proposition}
\label{prop:exunirho}
Assume that $\rho$ satisfies (\ref{eq:rho2}). Then
for any $\eta \in H^1(1+\rho)$ there exists a unique strong $H^1(1+\rho)$-valued solution $v$
to the stochastic evolution equation (\ref{eq:snfedifference}).
$v$ admits a continuous modification and for all $p\geq 1$, 
\[E(\sup_{0\leq t\leq T} \|v(t)\|_{H^1(1+\rho)}^p)<\infty.\]
\end{proposition}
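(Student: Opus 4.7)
The plan is to mimic the proof of Proposition \ref{prop:exuni} but now in the Hilbert space $H^1(1+\rho)$, invoking the existence theorem for SPDEs with Lipschitz drift (Thm.~7.4 in \cite{daprato} with $A=0$). Two ingredients are needed: the noise must take values in $H^1(1+\rho)$, and the drift $B(t,v) = -v + w\ast(F(\twp(\cdot-ct)+v)-F(\twp(\cdot-ct)))$ must be Lipschitz as a map $H^1(1+\rho) \to H^1(1+\rho)$, uniformly in $t\in [0,T]$. The first point is immediate from the standing Hilbert--Schmidt assumption on $Q^{\frac{1}{2}}:L^2\to H^1(1+\rho)$ stated in the paragraph preceding the proposition (the calculation there shows $\sum_k \|Q^{\frac{1}{2}} e_k\|_{H^1(1+\rho)}^2 < \infty$).

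The core of the argument is therefore the Lipschitz estimate. The linear term $-v$ is trivially $1$-Lipschitz, so I would focus on $w\ast D$ with $D := F(\twp(\cdot-ct)+v_1)-F(\twp(\cdot-ct)+v_2)$. Pointwise $|D|\leq \|F'\|_\infty|v_1-v_2|$, hence $\|D\|\leq \|F'\|_\infty\|v_1-v_2\|$ and $\|D\|_\rho\leq \|F'\|_\infty\|v_1-v_2\|_\rho$. The unweighted $L^2$ and $H^1$ bounds for $w\ast D$ are obtained exactly as in Proposition \ref{prop:exuni}, using $\|w\|_1=1$ together with $|w_x|\leq \|w_x/w\|_{\infty} w$ and the identity $(w\ast D)_x = w_x\ast D$. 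The new ingredient is the weighted $L^2(\rho)$-bound, which is precisely where assumption (\ref{eq:rho2}) is needed. Applying Cauchy--Schwarz to the probability measure $w(x-y)\,dy$ gives
\[(w\ast D)^2(x) \leq (w\ast D^2)(x),\]
and then Fubini combined with the symmetry of $w$ and (\ref{eq:rho2}) yields
\[\int (w\ast D)^2(x)\rho(x)\,dx \leq \int D^2(y)(w\ast\rho)(y)\,dy \leq K_\rho\|D\|_\rho^2.\]
The same two tricks ($(w\ast D)_x = w_x\ast D$ and $|w_x|\leq \|w_x/w\|_{\infty} w$) reduce the $H^1(\rho)$-estimate for $w\ast D$ to the one just established. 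Assembling the four pieces gives $\|B(t,v_1)-B(t,v_2)\|_{H^1(1+\rho)}\leq L\|v_1-v_2\|_{H^1(1+\rho)}$ with $L$ independent of $t$.

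Given Lipschitz continuity of the drift in $H^1(1+\rho)$ and the Hilbert--Schmidt property of $Q^{\frac{1}{2}}$ into the same space, Thm.~7.4 in \cite{daprato} produces the unique strong $H^1(1+\rho)$-valued solution, its continuous modification, and the moment bounds $E(\sup_{t\leq T}\|v(t)\|_{H^1(1+\rho)}^p)<\infty$ for every $p\geq 1$, in verbatim analogy with Proposition \ref{prop:exuni}. The only genuinely new step is the weighted convolution inequality above; it is essentially immediate once (\ref{eq:rho2}) is assumed, which is exactly why that hypothesis was designed. Note that condition (\ref{eq:rho1}) plays no role at this stage and will only be needed for subsequent estimates in the multiscale expansion.
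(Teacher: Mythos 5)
Your proposal is correct and follows essentially the same route as the paper: reduce everything to the Lipschitz continuity of the drift in $H^1(1+\rho)$, prove the weighted bound via $(w\ast D)^2 \leq w\ast D^2$, Fubini, the symmetry of $w$, and (\ref{eq:rho2}), handle the derivative with $|w_x|\leq \|w_x/w\|_\infty w$, and conclude with Thm.~7.4 of \cite{daprato}. The paper's proof is just a more compressed version of the same computation, and your observation that (\ref{eq:rho1}) is not needed here is also consistent with the paper.
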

\begin{proof} As in the proof of Proposition \ref{prop:exuni}, it is enough to show that $B(t,v) := -v + w\ast(F(\twp(\cdot-ct)+v)-F(\twp(\cdot-ct)))$ is Lipschitz continuous in $v$. This follows from the fact that for $v_1, v_2 \in H^1$, , using (\ref{eq:rho2}),
\begin{align*}
	& \Big\| w\ast \Big(F(\twp(y-ct)+v_1(y)) - F(\twp(y-ct)+v_2(y))\Big) \Big\|_{H^1(1+\rho)}^2\\
	& \leq \Big(1+ \normsup{\frac{w_x}{w}}^2\Big) \norm{F'}_{\infty}^2 \int \int w(x-y) (1+\rho(x)) dx (v_1(y)-v_2(y))^2 dy \\
	& \leq \Big(1+ \normsup{\frac{w_x}{w}}^2\Big) \norm{F'}_{\infty}^2 \int (1 + K_{\rho}\rho(y))) (v_1(y)-v_2(y))^2 dy \\
	& \leq \Big(1+ \normsup{\frac{w_x}{w}}^2\Big) \norm{F'}_{\infty}^2 (1+ K_{\rho}) \|v_1-v_2\|_{1+\rho}^2.
\end{align*}
\end{proof}

The family of linear operators defined in (\ref{eq:Lt}) satisfies 
\[\|L_th\|^2_{H^1(1+\rho)} \leq 2 \Big( 1+\Big( 1+\Big|\frac{w_x}{w}\Big\|_{\infty}^2 \Big) (1+K_{\rho}) \|F'\|_{\infty}^2 \Big) \|h\|_{H^1(1+\rho)}^2 =: L_*^2 \|h\|_{H^1(1+\rho)}^2.\]
It generates an evolution semigroup $(P_{t,s})_{0\leq s\leq t\leq T}$ with 
\[\|P_{t,s}h\|_{H^1(1+\rho)} \leq e^{L_*(t-s)} \|h\|_{H^1(1+\rho)}.\]
$v$ can thus be represented as a mild solution
\[v(t) = \epsilon P_{t,0} \eta + \int_0^t P_{t,s} R(s, v(s)) ds + \epsilon \int_0^t P_{t,s} dW_s.\]

\subsection{An SDE for the wave speed}

Set $v^m(x,t) = u(x,t)-\twp(x-ct-C^m(t))$ to be the difference between the solution $u$ to the stochastic neural field equation (\ref{eq:snfe}) and the deterministic wave profile moving at the dynamically adapted speed $c+ c^m(t)$ as defined in (\ref{eq:adaptationwavespeed}).
$v^m$ satisfies the stochastic evolution equation
\begin{equation}
\label{eq:vm}
\begin{split}
	dv^m(t) 
	& = \Big( -v^m(t) + w\ast (F'(\twp(\cdot-ct-C^m(t))) v^m(t)) + R^m(t,v^m(t)) \\
	& \qquad {} +c^m(t) \twp_x(\cdot-ct-C^m(t))  \Big) dt + \epsilon dW_t,
\end{split}
\end{equation}
where
\begin{align*}
R^m(t, v^m(t)) 
& =  w\ast\Big( F(\twp(\cdot-ct-C^m(t))+v^m(t))-F(\twp(\cdot-ct-C^m(t)))\\
& \qquad {} -F'(\twp(\cdot-ct-C^m(t)))v^m(t)\Big).
\end{align*}
Set $\varphi^m(t) = ct+C^m(t)$.
\begin{lemma}
\label{lemma:sdewavespeed}
The adaptation of the wave speed $c^m(t)=-m\langle v^m(t), \psi(\cdot-\varphi^m(t))\rangle$ solves the SDE
\begin{align*}
dc^m(t) 
& = \Big( -mc^m(t) + m \langle  v^m(t), \psi_x(\cdot- \varphi^m(t)) \rangle c^m(t)  \\
& \qquad {} - m \langle R^m(t, v^m(t)), \psi(\cdot- \varphi^m(t)) \rangle \Big) dt  -\epsilon m \langle \psi(\cdot- \varphi^m(t)), dW_t \rangle, \\
c^m(0) &= -\epsilon m \langle \eta, \psi\rangle.
\end{align*}
\end{lemma}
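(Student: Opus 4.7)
The plan is to obtain the SDE by applying an Itô product rule to the pairing $c^m(t)=-m\langle v^m(t),\psi(\cdot-\varphi^m(t))\rangle$, viewing $v^m(t)$ as the $H^1(1+\rho)$-valued Itô process governed by (\ref{eq:vm}), and $g(t):=\psi(\cdot-\varphi^m(t))$ as an $H^1$-valued process of finite variation (since $\varphi^m$ is absolutely continuous with $\dot\varphi^m=c+c^m$, pathwise). Since $g$ has no martingale part, no Itô correction term appears and the bracket $\langle v^m(t),g(t)\rangle$ satisfies the classical integration-by-parts identity
\begin{align*}
d\langle v^m(t),g(t)\rangle &= \langle dv^m(t),g(t)\rangle + \langle v^m(t),dg(t)\rangle,\\
dg(t) &= -\psi_x(\cdot-\varphi^m(t))\,(c+c^m(t))\,dt.
\end{align*}

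First I would substitute (\ref{eq:vm}) into the first term, obtaining a drift consisting of four summands plus the stochastic integral $\epsilon\langle \psi(\cdot-\varphi^m(t)),dW_t\rangle$. The key algebraic step is then to recognize, via the evenness of $w$ and a change of variables $z=x-\varphi^m(t)$, that
\[
\langle w\ast(F'(\twp(\cdot-\varphi^m))v^m),\psi(\cdot-\varphi^m)\rangle = \langle v^m, F'(\twp(\cdot-\varphi^m))\,(w\ast\psi)(\cdot-\varphi^m)\rangle,
\]
which by the defining identity $L^{\#,*}\psi=0$, namely $F'(\twp)(w\ast\psi)=\psi+c\psi_x$, collapses to $\langle v^m,\psi(\cdot-\varphi^m)\rangle+c\langle v^m,\psi_x(\cdot-\varphi^m)\rangle$. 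Combined with the $-v^m$ term, this gives the drift contribution $-mc\langle v^m,\psi_x(\cdot-\varphi^m)\rangle$. The $c^m(t)\twp_x(\cdot-\varphi^m)$ term contributes $-mc^m(t)\langle\twp_x,\psi\rangle=-mc^m(t)$ using the normalization $\langle\twp_x,\psi\rangle=1$, and the $R^m$-term yields $-m\langle R^m(t,v^m(t)),\psi(\cdot-\varphi^m)\rangle$.

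Next I would add the contribution $-m\langle v^m(t),dg(t)\rangle = m(c+c^m(t))\langle v^m(t),\psi_x(\cdot-\varphi^m(t))\rangle\,dt$ coming from the motion of the translated test function. The two $mc\langle v^m,\psi_x(\cdot-\varphi^m)\rangle$ terms then cancel, leaving precisely the drift
\[
-mc^m(t) + mc^m(t)\langle v^m(t),\psi_x(\cdot-\varphi^m(t))\rangle - m\langle R^m(t,v^m(t)),\psi(\cdot-\varphi^m(t))\rangle,
\]
together with the noise term $-\epsilon m\langle \psi(\cdot-\varphi^m(t)),dW_t\rangle$, matching the claimed SDE. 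The initial condition follows from $C^m(0)=0$ and $v^m(0)=\epsilon\eta$.

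The main technical obstacle is justifying the product rule at the required level of regularity: $v^m$ lives in $H^1(1+\rho)$, $\psi\in H^1$, and $\varphi^m$ is a random absolutely continuous process. One would verify the formula for smooth simple approximations of $v^m$ and $g$, using $\psi\in\mathcal{C}^1$ (noted after the existence proposition for $\psi$) and the $H^1$-bounds of Proposition \ref{prop:exunirho} to pass to the limit; alternatively, one can apply Itô's formula to the functional $(v,a)\mapsto \langle v,\psi(\cdot-a)\rangle$ on $H^1\times\R$, noting that the $a$-component is of finite variation so its quadratic variation vanishes and no cross term is generated. Once this is in place, every further step is algebraic manipulation using the identity $L^{\#,*}\psi=0$ and the normalization of $\psi$.
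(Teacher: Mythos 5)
Your proposal is correct and follows essentially the same route as the paper: Itô's formula applied to $\langle v^m(t),\psi(\cdot-\varphi^m(t))\rangle$ with the finite-variation translation contributing $+m(c+c^m)\langle v^m,\psi_x(\cdot-\varphi^m)\rangle\,dt$, followed by the cancellation via $L^{\#,*}\psi=0$ (the paper groups this as $\langle v^m,L^{\#,*}\psi(\cdot-\varphi^m)\rangle=0$, which is the same identity you use) and the normalization $\langle\twp_x,\psi\rangle=1$. Nothing essential is missing.
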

\begin{proof}
By It\^{o}'s lemma, 
\begin{align*}
	 c^m(t)  
	& = -\epsilon m \langle \eta, \psi \rangle  + m \int_0^t (c + c^m(s))\langle v^m(s), \psi_x(\cdot-\varphi^m(s)) \rangle ds \\
	& \qquad {} -m \int_0^t \langle - v^m(s)+ w\ast\Big(F'(\twp(\cdot- \varphi^m(s)))v^m(s)\Big) , \psi(\cdot-\varphi^m(s)) \rangle ds \\
	& \qquad {} - m \int_0^t \langle R^m(s, v^m(s)), \psi(\cdot-\varphi^m(s))\rangle ds - m \int_0^t c^m(s) ds \langle \twp_x, \psi \rangle \\
	& \qquad {}- \epsilon m \int_0^t \langle \psi(\cdot-\varphi^m(s)), dW_s\rangle \\
	& = c^m(0) -m \int_0^t \langle v^m(s), L^{\#,*}\psi (\cdot- \varphi^m(s)) \rangle ds  \\
	& \qquad {} - m \int_0^t \langle R^m(s,v^m(s)), \psi(\cdot-\varphi^m(s))\rangle ds \\
	& \qquad {} + m  \int_0^t c^m(s) \langle v^m(s),  \psi_x(\cdot-\varphi^m(s)) \rangle ds -m \int_0^t c^m(s) ds \\
	& \qquad {} -\epsilon m \int_0^t \langle \psi(\cdot-\varphi^m(s)),  dW_s \rangle \\
	& =  c^m(0) - m \int_0^t c^m(s) ds  + m  \int_0^t c^m(s) \langle v^m(s), \psi_x (\cdot-\varphi^m(s)) \rangle ds \\
	& \qquad {} - m \int_0^t \langle R^m(s, v^m(s)), \psi(\cdot-\varphi^m(s))\rangle ds   - \epsilon m \int_0^t \langle \psi(\cdot-\varphi^m(s)),  dW_s \rangle 
\end{align*}
\end{proof}

\subsection{Expansion with respect to the noise strength}
\label{subsection:expansion}

As outlined in section \ref{subsection:phaseofthewave} we expect $\twp(\cdot-ct-C^m(t))$ to track the stochastic solution, which means $v^m$ should describe the fluctuations in the wave profile. As long as $m$ is finite, this can however only be an approximate description.

We prove an expansion of the solution $u$ to (\ref{eq:snfe}) that allows to analyze the behavior of the coupled system $(v^m, C^m)$ to arbitrary order of $\epsilon$. In Section \ref{subsection:immediaterelaxation} we will derive the expansion in the limit $m\rightarrow\infty$ and analyze properties of the coefficients in the expansion. In particular, we will show that the limiting regime indeed corresponds to immediate relaxation to the right phase, thereby justifying the expansion as a description of the effects of the noise.

Set $\rho_t(x) = \rho(x-ct)$.
For $h\in \mathcal{C}([0,T], H^1(1+\rho))$ set 
\[\|h\|_T = \sup_{0\leq t\leq T} \|h(t)\|_{H^1(1+\rho_t)},\]
 and for $f\in C([0,T])$ set $|f|_T = \sup_{0 \leq t\leq T} |f(t)|$.
Here we move the measure with the wave such that for all $t\geq 0$,
\[\norm{\partial_x u^{TW}_t}_{H^1(1+\rho_t)}= \norm{\twp_x(\cdot-ct)}_{H^1(1+\rho_t)} = \norm{\twp_x}_{H^1(1+\rho)}.\]

Note that there exists a constant $K>0$ such that $\normsup{h} \leq K \norm{h}_{H^1(1+\rho)}$ for all $h \in H^1(1+\rho)$.

We start by formally identifying the highest order terms in $c^m(t)$ using Lemma \ref{lemma:sdewavespeed}. Since we expect both $C^m$ and $v^m$ to be of order $\epsilon$ (up to the time horizon $T$) that leads us to define $c^m_0(t)$ to be the unique strong solution to
\begin{equation}
\label{eq:cm0}
dc^m_0(t) = -mc^m_0(t)dt - m \langle \psi(\cdot-ct), dW_t \rangle, \qquad c^m_0(0) = -m\langle \eta, \psi \rangle.
\end{equation}
Set $C^m_0(t) = \int_0^t c^m_0(s) ds$ and $\varphi^m_0(t) = ct + \epsilon C^m_0(t)$.

Formally identifying the highest order terms in (\ref{eq:vm}) we define $v^m_0$ to be the unique strong solution to
\[dv^m_0(t) = \big(L_t v^m_0(t) + c^m_0(t) \twp_x(\cdot-ct) \big) dt + dW_t, \qquad v^m_0(0) = \eta,\]
where $L_t$ is as defined in (\ref{eq:Lt}).

\begin{remark}
Note that, to first order, the dynamics of $C^m$ decouple from those of $v^m$. This would not be the case if we had defined the phase in the unweighted space $L^2$. It is by defining the phase adaptation in $L^2(\rho)$ that we can achieve a separate description of the influence of the noise on the two scales.
\end{remark}

For $\epsilon>0$, $0\leq q < 1$, set
\begin{equation}
\label{eq:tau}
\tau_{q, \epsilon} = \inf \{ 0\leq t \leq T: \|v(t)\|_{H^1(1+\rho_t)} \geq \epsilon^{1-q}\},
\end{equation}
where $v$ is the solution from Prop. \ref{prop:exunirho}, and
\[\tau_{q, \epsilon}^{m} = \inf \{ 0 \leq t\leq T: \ |C^m_0(t)|\geq \epsilon^{-q}\}.\]

\begin{theorem}
\label{thm:expansion1}
Let $q<\frac{1}{2}$. 
Then on $\{\tau_{q,\epsilon} \wedge \tau_{q,\epsilon}^{m} = T\}$,
\[u(x,t) = \twp(x-ct-\epsilon C^m_0(t)) + \epsilon  v^m_0(t) + \epsilon r_1^m(t)\]
with
\[\|r^m_1\|_T \leq \alpha_1(T) \epsilon^{1-2q}\]
for a constant $\alpha_1(T)$ independent of $\epsilon$ and $m$,
and
\[P(\tau_{q,\epsilon} \wedge \tau^{m}_{q,\epsilon} =T) \xrightarrow{\epsilon\rightarrow 0} 1.\]
\end{theorem}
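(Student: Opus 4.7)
My proof plan is to define the remainder directly and split it into two pieces that can be bounded separately. Set
\[\epsilon r^m_1(x,t) := u(x,t) - \twp(x-ct-\epsilon C^m_0(t)) - \epsilon v^m_0(x,t),\]
use $u = \twp(\cdot-ct)+v$, and Taylor-expand $\twp(\cdot - ct -\epsilon C^m_0(t))$ around $x-ct$ to second order (integral form). This yields
\[\epsilon r^m_1(t) = s(t) - (\epsilon C^m_0(t))^2\, g(\cdot,t),\]
where $s(t) := v(t) - \epsilon v^m_0(t) + \epsilon C^m_0(t)\,\twp_x(\cdot -ct)$ and $g$ is the Taylor remainder, which is bounded in $H^1(1+\rho_t)$ uniformly in $t$ using the translation estimate (\ref{eq:rho1}) and the regularity of $\twp$ (via (\ref{eq:tw}) and (\ref{eq:boundrhox})). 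On the event $\{t\leq \tau^m_{q,\epsilon}\}$ the prefactor $(\epsilon C^m_0)^2$ is at most $\epsilon^{2-2q}$, so this piece contributes at most $\alpha \epsilon^{2-2q}$ to $\|\epsilon r^m_1\|_T$.

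The main work is to control $s(t)$. Applying It\^o's formula to $v$ (from (\ref{eq:snfedifference})) and to $\epsilon v^m_0$, and the pathwise product rule to $\epsilon C^m_0(t)\twp_x(\cdot-ct)$, together with the identity $L_t \twp_x(\cdot-ct) = -c\,\twp_{xx}(\cdot-ct)$ (which follows by differentiating the traveling wave equation (\ref{eq:tw})), the $\epsilon\,dW_t$ terms and the $\epsilon c^m_0\twp_x$ terms cancel and leave the purely deterministic evolution
\[ds(t) = L_t s(t)\,dt + R(t,v(t))\,dt, \qquad s(0)=0.\]
Representing $s$ by the evolution semigroup $(P_{t,r})$ on $H^1(1+\rho)$ and using the quadratic bound $\|R(t,v)\|_{H^1(1+\rho_t)}\le C\|v\|_{H^1(1+\rho_t)}^2$ (obtained from Taylor's theorem applied to $F$, the embedding $\|v\|_\infty\le K\|v\|_{H^1(1+\rho)}$, and Assumption \ref{ass:rho}(ii), as in the Lipschitz estimate of Proposition \ref{prop:exunirho}), I get on $\{t\leq\tau_{q,\epsilon}\}$
\[\|s(t)\|_{H^1(1+\rho_t)} \leq \int_0^t e^{L_*(t-r)}\,C\|v(r)\|_{H^1(1+\rho_r)}^2\,dr \leq C T e^{L_* T}\epsilon^{2(1-q)}.\]
Combining both pieces and dividing by $\epsilon$ yields $\|r^m_1\|_T \leq \alpha_1(T)\epsilon^{1-2q}$; inspection shows the constants are independent of $\epsilon$ and $m$.

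For the probability statement I would argue separately. For $\tau_{q,\epsilon}$, factoring the $\epsilon$ out of the initial condition and noise in (\ref{eq:snfedifference}) reveals that $v/\epsilon$ admits the moment bound $E\|v/\epsilon\|_T^p \leq C_p$ of Proposition \ref{prop:exunirho}, so Markov's inequality gives $P(\tau_{q,\epsilon}<T)\leq C_p\,\epsilon^{pq}$ for any $p\geq 1$, which vanishes for any $q>0$. For $\tau^m_{q,\epsilon}$, observe that $c^m_0$ and hence $C^m_0$ defined by (\ref{eq:cm0}) are Gaussian processes whose law does \emph{not} depend on $\epsilon$; hence $P(|C^m_0|_T \ge \epsilon^{-q})\to 0$ by Gaussian tail estimates. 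Since $q<1/2$, both probabilities vanish as $\epsilon\downarrow 0$. The main obstacle is the derivation of the clean equation $ds = L_t s\,dt + R(t,v)\,dt$: it requires carefully verifying the exact cancellation of the stochastic increments and of the $\epsilon c^m_0\twp_x$ drift, which in turn hinges on the identity $L_t\twp_x(\cdot-ct) = -c\twp_{xx}(\cdot-ct)$ and the pathwise regularity of $C^m_0$. Once this equation is in place, the rest of the argument is a standard Gronwall-plus-Markov estimate carried out in the moving weighted norm.
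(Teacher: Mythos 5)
Your proposal is correct, and it reaches the remainder bound by a genuinely different decomposition than the paper. The paper writes down the evolution equation for $r^m_1$ itself and splits the source into three terms $r^m_{1,1},r^m_{1,2},r^m_{1,3}$ (the quadratic Taylor remainder of $F$ around the \emph{shifted} profile, the difference $F'(\twp(\cdot-\varphi^m_0))-F'(\twp(\cdot-ct))$ acting on $\tilde v^m_0$, and $c^m_0$ times the difference of the translated derivatives); the third term carries the factor $c^m_0$, which is not bounded uniformly in $m$, and the paper must remove it by the integration-by-parts device $r^m_{1,3}=(\frac{d}{dt}+c\partial_x)R^m_{1,3}$. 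Your choice of $s(t)=v(t)-\epsilon v^m_0(t)+\epsilon C^m_0(t)\twp_x(\cdot-ct)$ makes the noise increments and the $\epsilon c^m_0\twp_x$ drift cancel exactly — the identity $L_t\twp_x(\cdot-ct)=-c\,\twp_{xx}(\cdot-ct)$ does hold by translating $L^{\#}\twp_x=0$ — so the only source left is the single quadratic term $R(t,v)$, centered moreover at the \emph{unshifted} profile, which is directly controlled by $\tau_{q,\epsilon}$ without the intermediate estimate (\ref{eq:tildevm0}); the second-order Taylor remainder of the profile is then handled purely algebraically and is $O(\epsilon^{2-2q})$ on $\{t\le\tau^m_{q,\epsilon}\}$. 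This is cleaner and manifestly $m$-independent; the price is that the exact cancellation is special to first order, whereas the paper's three-term template is what iterates to Theorem \ref{thm:expansion2}. Your probability argument also differs: the paper evaluates the expansion at the hitting time and applies Chebyshev to $v^m_0$ and $C^m_0$, while you prove a uniform-in-$\epsilon$ moment bound on $v/\epsilon$ from the Lipschitz drift vanishing at $0$ plus Gronwall; that bound is indeed available here and gives the (stronger) rate $\epsilon^{pq}$ for every $p$. Two small points to make explicit: the stopping time is defined through the moving-weight norm $\norm{\cdot}_{H^1(1+\rho_t)}$, which is dominated by the fixed-weight norm via (\ref{eq:rho1}) since $c\ge 0$, so Proposition \ref{prop:exunirho} really does control $\|v/\epsilon\|_T$; and the semigroup estimate transported between $\rho_r$ and $\rho_t$ picks up the harmless constant $L_{\rho}^{1/2}$, as in the paper's proof.
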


\begin{proof}
Set $\tilde{v}^m_0(t) = u(t)-\twp(\cdot-\varphi^m_0(t)) =v(t) + \twp(\cdot-ct)-\twp(\cdot-\varphi^m_0(t))$. 
Note that by Taylor's formula, there exists $\xi(x,t)$ with $|\xi|\leq \epsilon |C^m_0|$ such that $\tilde{v}^m_0(x,t) = v(x,t) + \epsilon C^m_0(t) \twp_x(x-ct+\xi(x,t))$ and thus
\[\|\tilde{v}^m_0(t)\|_{H^1(1+\rho_t)} \leq \|v(t)\|_{H^1(1+\rho_t)} + \epsilon|C^m_0(t)| \|\twp_x\|_{H^1(1+\rho(\cdot-\xi(t))}. \]
Using (\ref{eq:rho1}) and (\ref{eq:boundrhox}), 
\begin{equation}
\label{eq:rho3}
\rho(x-\xi)\leq (L_{\rho} \vee e^{M\xi}) \rho(x),
\end{equation}
and thus on $\{\tau_{q,\epsilon}\wedge\tau_{q,\epsilon}^{m}=T\}$,
\begin{equation}
\label{eq:tildevm0}
\|\tilde{v}^m_0\|_T \leq \epsilon^{1-q} \big(1+ (L_{\rho} \vee e^{M\epsilon^{1-q}})^{\frac{1}{2}} \|\twp_x\|_{H^1(1+\rho)}\big)
\end{equation}

$r^m_1$ satisfies the pathwise evolution equation
\begin{align*}
dr^m_1(t) 
& = \Big( L_t r^m_1(t)  \\
& \quad + \frac{1}{\epsilon} w\ast \big( F(\twp(\cdot - \varphi^m_0(t)) + \tilde{v}^m_0(t) )-F(\twp(\cdot-\varphi^m_0(t)))\\
& \qquad - F'(\twp(\cdot-\varphi^m_0(t)))\tilde{v}^m_0(t)\big) \\
& \quad + \frac{1}{\epsilon} w\ast \big( ( F'(\twp(\cdot-\varphi^m_0(t)))-F'(\twp(\cdot-ct))) \tilde{v}^m_0(t)\big) \\
& \quad + c^m_0(t) (\twp_x(\cdot-\varphi^m_0(t))-\twp_x(\cdot-ct)) \Big) dt \\
& =: \big( L_t r^m_1(t) + r^m_{1,1}(t) + r^m_{1,2}(t) + r^m_{1,3}(t) \big) dt.
\end{align*}

\noindent By Taylor's theorem there exist $\xi_{1,1}(x,t)$, $\xi_{1,2}(x,t)$ such that
\begin{align*}
\epsilon r_{1,1}^m(t) & = \frac{1}{2} w\ast \big(F''(\twp(\cdot-\varphi^m_0(t))+\xi_{1,1}(t))(\tilde{v}^m_0(t))^2\big),\\
\epsilon r_{1,2}^m(t) & = - \epsilon C^m_0(t) w\ast \big(F''(\twp(\cdot-ct+\xi_{1,2}(t)))\twp_x(\cdot-ct+\xi_{1,2}(t))\tilde{v}^m_0(t)\big).
\end{align*}
We therefore have, using the Cauchy-Schwarz inequality, that
\begin{align*}
\|r_{1,1}^m(t)\|_{1+\rho_t}^2
& \leq \frac{1}{4\epsilon^2}  \norm{F''}_{\infty}^2 \int \int w^2(x-y) (\tilde{v}^m_0)^2(y,t)dy (1+\rho_t(x)) dx\\
& \qquad \int (\tilde{v}^m_0)^2(y,t) dy  \\
& \leq \frac{1}{4\epsilon^2}  \norm{F''}_{\infty}^2 \|w\|_{\infty}(1+ K_{\rho} ) \|\tilde{v}^m_0(t)\|^2 \|\tilde{v}^m_0(t)\|_{1+\rho_t}^2
\end{align*}
and
\begin{align*}
\|r_{1,2}^m(t)\|_{1+\rho_t}^2 
& \leq |C^m_0(t)|^2 \|F''(\twp)\twp_x\|_{\infty}^2 (1+K_\rho ) \|\tilde{v}^m_0(t)\|_{1+\rho_t}^2.
\end{align*}

\noindent Recall that $r^m_1$ can be represented as a mild solution,
\[r^m_1(t) = \int_0^t P_{t,s} \big( r^m_{1,1}(s)+r^m_{1,2}(s) + r^m_{1,3}(s)\big) ds.\]
Set $R^m_{1,3}(t) = \frac{1}{\epsilon} \big(- \twp(\cdot-\varphi^m_0(t))+\twp(\cdot-ct)-\epsilon C^m_0(t)\twp_x(\cdot-ct)\big)$.
We have $r^m_{1,3}(t) = \big(\frac{d}{dt}+c\partial_x) R^m_{1,3}(t)$ and therefore
\begin{align*}
\int_0^t P_{t,s}r^m_{1,3}(s)ds 
& = \int_0^t \frac{d}{ds} \big[ P_{t,s}R^m_{1,3}(s)\big] + P_{t,s}(L_s+c\partial_x)R^m_{1,3}(s) ds \\
& = R^m_{1,3}(t) + \int_0^t P_{t,s}(L_s+c\partial_x )R^m_{1,3}(s) ds .
\end{align*}
Recall that $\|P_{t,s}h\|_{H^1(1+\rho_s)} \leq e^{L_*(t-s)} \|h\|_{H^1(1+\rho_s)}$. Using Taylor's theorem and (\ref{eq:rho3}) it follows that 
\begin{align*}
& \bigg\|\int_0^t P_{t,s}r^m_{1,3}(s) ds\bigg\|_{H^1(1+\rho_t)} \\
& \leq \frac{\epsilon}{2} |C^m_0(t)|^2 \|\twp_{xx}(\cdot-ct-\xi_{1,3,1}(t))\|_{H^1(1+\rho_t)} \\
& \quad {} + L_{\rho}^{\frac{1}{2}} \int_0^t e^{L_*(t-s)} \frac{\epsilon}{2} |C^m_0(s)|^2 \big( L_*  \|\twp_{xx}(\cdot-cs-\xi_{1,3,1}(s))\|_{H^1(1+\rho_s)} \\
& \quad {} + c \|\twp_{xxx}(\cdot-cs-\xi_{1,3,2}(s))\|_{H^1(1+\rho_s)} \big) ds \\
& \leq \frac{\epsilon}{2}  |C^m_0|_T^2 (L_{\rho} \vee e^{M\epsilon^{1-q}})^{\frac{1}{2}} \Big( (1+L_{\rho}^{\frac{1}{2}} (e^{L_*T}-1)) \|\twp_{xx}\|_{H^1(1+\rho)}\\
& \quad {} + \frac{cL_{\rho}^{\frac{1}{2}}}{L_*}  (e^{L_*T}-1)\|\twp_{xxx}\|_{H^1(1+\rho)}\Big).
\end{align*}
Since for $i=1,2$
\begin{align*}
\|P_{t,s} r^m_{1,i}(s)\|_{H^1(1+\rho_t)}^2 
& \leq L_{\rho} e^{2L_*(t-s)} \|r_{1,i}^m(s)\|^2_{H^1(1+\rho_s)} \\
& \leq  L_{\rho}  e^{2L_*(t-s)} \Big(1+\Big\| \frac{w_x}{w} \Big\|^2_{\infty} \Big)\|r^m_{1,i}(s)\|_{1+\rho_s}^2,
\end{align*}
we conclude that there exists a constant $\alpha_1(T)$ independent of $m$ and $\epsilon$ such that
\[\|r^m_1\|_T \leq \alpha_1(T) \epsilon^{1-2q}.\]

If $\tau_{q,\epsilon}\wedge \tau_{q,\epsilon}^{m} = \tau_{q,\epsilon} < T$, then by continuity, almost surely, there exists $t_0<T$ such that, 
\begin{align*}
\epsilon^{1-q} 
& = \|v(t_0)\|_{H^1(1+\rho_{t_0})} \\
& = \| - \epsilon C^m_0(t_0) \twp_x(\cdot-ct_0+\xi(t_0)) + \epsilon v^m_0(t_0) +\epsilon r^m_1(t_0)\|_{H^1(1+\rho_{t_0})}
\end{align*}
and thus
\[\|v_0^m(t_0) - C^m_0(t_0) \twp_x(\cdot-ct_0+\xi(t_0))\|_{H^1(1+\rho_{t_0})} \geq \epsilon^{-q} -\epsilon \|r^m_1(t_0)\|_{H^1(1+\rho_{t_0})}.\]
We therefore have that
\begin{align*}
& P(\tau_{q,\epsilon} \wedge \tau_{q,\epsilon}^m = \tau_{q,\epsilon} < T)\\
& \leq P\big(\|v^m_0 - C^m_0 \twp_x(\cdot-ct+\xi(t))\|_T  \geq \epsilon^{-q} - \alpha_1(T) \epsilon^{1-2q}\big) \\
& \leq \frac{2 \epsilon^{2q}}{(1-\alpha_1(T)\epsilon^{1-q})^2}  \big( E(\|v^m_0\|_T^2) + E(|C^m_0|_T^2) (L_{\rho}\vee e^{M\epsilon^{1-q}})\|\twp_x\|_{H^1(1+\rho)}^2\big)  \\
& \xrightarrow{\epsilon\rightarrow 0} 0.
\end{align*}
Since
\[P(\tau^{m}_{q,\epsilon} < T) \leq P(|C^m_0|_T \geq \epsilon^{-q}) \leq \epsilon^{2q} E(|C^m_0|_T^2) \xrightarrow{\epsilon\rightarrow 0} 0,\]
it follows that
\[P(\tau_{q,\epsilon} \wedge \tau_{q,\epsilon}^{m} < T) \leq P(\tau_{q,\epsilon} \wedge \tau_{q,\epsilon}^{m}  = \tau_{q,\epsilon} < T) + P(\tau^{m}_{q,\epsilon} < T) \xrightarrow{\epsilon\rightarrow 0} 0. \]
\end{proof}

Analogously we can obtain an expansion to higher order of $\epsilon$.
Formally identifying the highest order terms in $\frac{1}{\epsilon} c^m(t)- c^m_0(t)$ we define $c^m_1(t)$ to be the unique strong solution to 
\begin{equation}
\label{eq:sdec1m}
\begin{split}
	dc^m_1(t) & = \big( -mc^m_1(t) - \frac{1}{2} m \langle w\ast(F''(\twp(\cdot-ct))(v^m_0)^2(t)), \psi(\cdot-ct) \rangle\\
	& \qquad {} + m c^m_0(t) \langle v^m_0(t), \psi_x(\cdot-ct) \rangle \big) dt  + m C^m_0(t) \langle \psi_x(\cdot-ct), dW_t\rangle, \\
	c^m_1(0) & = 0.
\end{split}
\end{equation}
Set $C^m_1(t) = \int_0^t c^m_1(s) ds$ and $\varphi^m_1(t) = ct + \epsilon C^m_0(t)+ \epsilon^2 C^m_1(t)$.
Identifying the highest order terms in $\frac{1}{\epsilon}v^m-  v^m_0$, set $v^m_1$ to be the unique strong solution to
\begin{equation}
\label{eq:sdev1m}
\begin{split}
	dv^m_1(t) 
	& = \Big(L_tv^m_1(t) + w\ast \Big(F''(\twp(\cdot-ct))\big(\frac{1}{2}(v^m_0)^2(t)-C^m_0(t)\twp_x(\cdot-ct) v^m_0(t)\big)\Big) \\
	& \qquad {}- c^m_0(t) C^m_0(t) \twp_{xx}(\cdot-ct)  + c^m_1(t) \twp_x(\cdot-ct)  \Big)dt,\\
	v^m_1(0) & = 0.
\end{split}
\end{equation}
For arbitrary $k \in \N$, if $F \in \mathcal{C}^{k+1}$ we can iterate the procedure and define $c^m_{k-1}$, and $v^m_{k-1}$ by successively identifying the highest order terms in $\frac{1}{\epsilon^{k-1}} (c^m-\epsilon c^m_0 - \ldots - \epsilon^{k-1}c^m_{k-2})$ and $ \frac{1}{\epsilon^{k-1}}(\tilde{v}^m(t)-\epsilon v^m_0(t) - \ldots - \epsilon^{k-1} v^m_{k-2}(t)$. This way we obtain an expansion of $u$ up to order $\epsilon^k$.
Set \[\varphi_{k-1}^m(t) = ct + \epsilon C^m_0(t) + \ldots + \epsilon^k C^m_{k-1}(t).\]
\begin{theorem}
\label{thm:expansion2}
Assume that $F \in \mathcal{C}^{k+1}$ for some $k\geq 1$. Let $q< \frac{1}{k+1}$. 
Then on $\{ \tau_{q,\epsilon} \wedge \tau_{q,\epsilon}^m=T\}$,
\[u(x,t) = \twp(\cdot-\varphi^m_{k-1}(t)) + \epsilon v^m_0(t) + \ldots + \epsilon^{k} v^m_{k-1}(t) + \epsilon^k r^m_k(t)\]
with
\[\|r^m_k\|_T \leq \alpha_k(T) \epsilon^{1-(k+1)q}\]
for some constant $\alpha_k(T)$ independent of $\epsilon$ and $m$.
\end{theorem}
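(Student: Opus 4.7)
The plan is to proceed by induction on $k$. The base case $k=1$ is exactly Theorem \ref{thm:expansion1}. For the inductive step, assume the expansion holds up to order $k-1$ with $\|r^m_{k-1}\|_T \leq \alpha_{k-1}(T) \epsilon^{1-kq}$. I would define
\[ r^m_k(t) := \frac{1}{\epsilon^k}\Big(u(t) - \twp(\cdot - \varphi^m_{k-1}(t)) - \epsilon v^m_0(t) - \cdots - \epsilon^k v^m_{k-1}(t)\Big), \]
apply It\^o's lemma to $u$, the chain rule to $\twp(\cdot - \varphi^m_{k-1}(t))$, and the defining SDEs for the $v^m_i$ (the noise in $u$ cancels exactly against the noise in $\epsilon v^m_0$), to derive a pathwise evolution equation of the form
\[ dr^m_k(t) = \big( L_t r^m_k(t) + \textstyle\sum_j r^m_{k,j}(t) \big) dt, \qquad r^m_k(0) = 0, \]
then represent $r^m_k$ via the mild solution formula against the evolution semigroup $(P_{t,s})$ and bound each forcing term in $H^1(1+\rho_t)$-norm.

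The forcing terms group, as in the $k=1$ proof, into three families. \emph{First}, Taylor expanding $F(\twp(\cdot - \varphi^m_{k-1}(t)) + \tilde v^m_{k-1}(t))$ in its second argument to order $k+1$, where $\tilde v^m_{k-1} := u - \twp(\cdot-\varphi^m_{k-1})$; the first $k$ orders are absorbed into $v^m_0, \ldots, v^m_{k-1}$ by construction, while the $(k+1)$-th Taylor remainder has $H^1(1+\rho_t)$-norm of order $\|\tilde v^m_{k-1}\|_T^{k+1} \lesssim \epsilon^{(k+1)(1-q)}$ on $\{\tau_{q,\epsilon}\wedge\tau^m_{q,\epsilon}=T\}$, so that after division by $\epsilon^k$ we gain exactly $\epsilon^{1-(k+1)q}$. \emph{Second}, Taylor expanding $F'(\twp(\cdot-\varphi^m_{k-1}(t)))$ and $\twp(\cdot-\varphi^m_{k-1}(t))$ around the values at $\cdot - ct$ in powers of $\varphi^m_{k-1}(t)-ct = \epsilon C^m_0 + \cdots + \epsilon^k C^m_{k-1}$; the low-order terms cancel against the defining SDEs for $v^m_i$ and $c^m_i$, while the $(k+1)$-th remainders are controlled via (\ref{eq:rho3}), the bound (\ref{eq:boundrhox}), and the fact that $\sup_{0\leq i \leq k-1} E(|C^m_i|_T^p) < \infty$ for all $p \geq 1$ (uniform in $\epsilon$, since the $v^m_i, c^m_i$ SDEs do not depend on $\epsilon$). \emph{Third}, the phase-velocity residual, analogous to $r^m_{1,3}$, is rewritten as $(\tfrac{d}{dt}+c\partial_x) R^m_{k,3}$ for an explicit Taylor remainder $R^m_{k,3}$ involving $\twp^{(j)}(\cdot - ct - \xi(t))$ for some $|\xi|\leq |\varphi^m_{k-1}-ct|$; integration by parts against $P_{t,s}$, followed by the bound $\|P_{t,s}h\|_{H^1(1+\rho_t)} \leq L_\rho^{1/2} e^{L_*(t-s)}\|h\|_{H^1(1+\rho_s)}$, produces the desired estimate in terms of $\|\twp_{xx}\|_{H^1(1+\rho)}$ and $\|\twp_{xxx}\|_{H^1(1+\rho)}$ (and, for general $k$, derivatives up to order $k+2$, which is why $F\in\mathcal{C}^{k+1}$ is assumed).

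Combining the three families and Gronwalling, or rather assembling the mild-solution bound pointwise in $t$ exactly as in the proof of Theorem \ref{thm:expansion1}, yields $\|r^m_k\|_T \leq \alpha_k(T) \epsilon^{1-(k+1)q}$ with a constant $\alpha_k(T)$ depending on $\alpha_{k-1}(T)$, the $L^p$-moments of the $C^m_i$ and $v^m_i$, and Sobolev norms of $\twp$; the condition $q<\tfrac{1}{k+1}$ guarantees $1-(k+1)q>0$ so the bound is genuinely small in $\epsilon$. The claim on $P(\tau_{q,\epsilon}\wedge\tau^m_{q,\epsilon}=T)$ follows verbatim from the $k=1$ case, which established it directly.

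The main obstacle is the combinatorial bookkeeping. At each intermediate order $\epsilon^j$ for $1 \leq j \leq k$, contributions arrive simultaneously from (a) the nonlinear Taylor expansion of $F$ in its second argument, (b) the geometric Taylor expansion of $F, F', \twp, \twp_x$ in the phase displacement, and (c) the matching of the phase-velocity term $c^m(t)\twp_x(\cdot-\varphi^m(t))$ against $\sum_i \epsilon^i c^m_i \twp_x(\cdot-ct)$; one must verify that, at every order below $k+1$, these contributions cancel precisely by the recursive definitions of $v^m_j$ and $c^m_j$. This is essentially a Fa\`a di Bruno expansion of the nonlinear wave equation in the noise strength, and while each individual cancellation is routine, organizing them cleanly is the only nontrivial part of the argument; the individual remainder estimates are, by contrast, straightforward iterates of the bounds already carried out in the $k=1$ case.
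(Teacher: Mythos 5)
Your overall architecture matches the paper's: a pathwise evolution equation for $r^m_k$, the mild-solution representation against $(P_{t,s})$, three families of forcing terms (nonlinear Taylor remainder of $F$, geometric Taylor remainder in the phase displacement, phase-velocity residual rewritten as $(\frac{d}{dt}+c\partial_x)R^m_{k,3}$ and integrated by parts), and the exponent count $\epsilon^{(k+1)(1-q)}/\epsilon^{k}=\epsilon^{1-(k+1)q}$. The paper in fact only writes out $k=2$ and declares larger $k$ analogous, so your inductive framing is, if anything, more explicit about the structure.

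There is, however, a genuine gap in how you control the higher-order phase coefficients. The claimed estimate $\|r^m_k\|_T\leq\alpha_k(T)\epsilon^{1-(k+1)q}$ is a \emph{pathwise} bound on the event $\{\tau_{q,\epsilon}\wedge\tau^m_{q,\epsilon}=T\}$ by a deterministic constant independent of $\epsilon$ \emph{and} $m$. You propose to control the geometric remainders via $\sup_i E(|C^m_i|_T^p)<\infty$ ``uniform in $\epsilon$''; but finiteness of moments cannot produce a pathwise bound by a constant, and the stopping time only constrains $|C^m_0|_T\leq\epsilon^{-q}$ --- it says nothing about $C^m_1,\dots,C^m_{k-1}$. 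Worse, the SDE (\ref{eq:sdec1m}) for $c^m_1$ has drift and diffusion coefficients proportional to $m$, so a bound read off naively from that SDE is not uniform in $m$. The paper's resolution is Lemma \ref{lemma:C1m}: an It\^{o}/integration-by-parts rewriting of $C^m_1$ in which $m$ enters only through the probability kernels $me^{-m(t-s)}\,ds$ and $(1-e^{-m(t-s)})\,ds$, combined with the pathwise chain $\|v^m_0\|_T\leq\beta_1(T)\epsilon^{-q}$ (from $v^m_0=\frac{1}{\epsilon}\tilde v^m_0-r^m_1$, (\ref{eq:tildevm0}) and Theorem \ref{thm:expansion1}) and hence $|C^m_1|_T\leq\beta_2(T)\epsilon^{-2q}$ on the event. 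These $\epsilon^{-q}$ and $\epsilon^{-2q}$ growth rates are precisely what make the order-counting close at $1-(k+1)q$ (note $q<\frac{1}{k+1}$ is needed because the coefficients themselves grow as negative powers of $\epsilon$, not because they are $O(1)$). Without an $m$-uniform, pathwise representation of each $C^m_i$, $i\geq 1$, in terms of $C^m_0$ and $v^m_0$, your second and third families of remainders are not actually estimated; producing those representations for general $k$ is the real content of the ``combinatorial bookkeeping'' you defer.
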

We will prove the expansion for $k=2$. For larger $k$ the analysis can be carried out analogously, but the formulas become unwieldy.

We start by deriving a useful representation of $C^m_1$.
\begin{lemma}
\label{lemma:C1m}
\begin{align*}
C_1^m(t) 
& = -  \int_0^t (1-e^{-m(t-s)})\langle w\ast\Big(F''(\twp(\cdot-cs))\big(\frac{1}{2} (v^m_0)^2(s) \\
& \qquad {} -C^m_0(s) \twp_x(\cdot-cs) v_0^m(s)\big)\Big), \psi(\cdot-cs)\rangle  ds\\
& \quad + \int_0^t m e^{-m(t-s)} C^m_0(s) \langle v^m_0(s), \psi_x(\cdot-cs) \rangle ds\\
& \quad - \frac{1}{2}\int_0^t me^{-m(t-s)} (C^m_0)^2(s) ds \langle \psi_x, \twp_x\rangle.
\end{align*}
\end{lemma}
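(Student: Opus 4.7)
The plan is to treat the SDE (\ref{eq:sdec1m}) as a linear equation driven by a deterministic forcing plus a stochastic forcing, solve it by variation of constants, integrate once more in time, and then convert the $c_0^m$-forcing into $C_0^m$ via integration by parts and It\^{o}'s formula.

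First, multiplying (\ref{eq:sdec1m}) by the integrating factor $e^{mt}$, I obtain
\[
c_1^m(t) = \int_0^t e^{-m(t-s)}\Bigl(-\tfrac{m}{2}\langle w\ast(F''(\twp(\cdot-cs))(v^m_0)^2(s)),\psi(\cdot-cs)\rangle + mc^m_0(s)\langle v^m_0(s),\psi_x(\cdot-cs)\rangle\Bigr)ds + \int_0^t m e^{-m(t-s)} C^m_0(s)\langle \psi_x(\cdot-cs),dW_s\rangle.
\]
Integrating $c_1^m$ over $[0,t]$ and applying (ordinary and stochastic) Fubini to switch the order of integration, I pick up a factor $\frac{1}{m}(1-e^{-m(t-s)})$ in each term, producing three integrals against $ds$ or $dW_s$ with weight $(1-e^{-m(t-s)})$. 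This is the raw form; the remaining work is to massage the $c_0^m$-integral and the stochastic integral into the shape stated in the lemma.

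Second, I rewrite the middle summand $\int_0^t(1-e^{-m(t-s)})c_0^m(s)\langle v^m_0(s),\psi_x(\cdot-cs)\rangle\,ds$ using $c_0^m\,ds = dC_0^m$ (pathwise, since $C_0^m$ has $C^1$ paths as a time integral of an It\^{o} process) and integrate by parts against $dC_0^m$. The boundary terms vanish (at $s=0$ since $C_0^m(0)=0$, at $s=t$ since the weight vanishes), and differentiating the weight produces an $m e^{-m(t-s)} C_0^m(s)\langle v^m_0(s),\psi_x(\cdot-cs)\rangle\,ds$ term, matching the second summand in the lemma. The remaining piece requires computing $d\langle v^m_0(s),\psi_x(\cdot-cs)\rangle$ via It\^{o}: the SDE for $v^m_0$ contributes $\langle L_s v^m_0(s)+c^m_0(s)\twp_x(\cdot-cs),\psi_x(\cdot-cs)\rangle\,ds + \langle\psi_x(\cdot-cs),dW_s\rangle$, and the chain rule from the shift $\cdot-cs$ contributes $-c\langle v^m_0(s),\psi_{xx}(\cdot-cs)\rangle\,ds$.

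Third, the crux is simplifying $\langle L_s v^m_0,\psi_x(\cdot-cs)\rangle$ by passing to the adjoint. Relating $L_s$ to the frozen-wave operator via $L_s^* = L^{\#,*}+c\partial_x$ (in the moving frame) and differentiating $L^{\#,*}\psi=0$ yields $L^{\#,*}\psi_x = -F''(\twp)\twp_x\,(w\ast\psi)$, so
\[
\langle L_s v^m_0(s),\psi_x(\cdot-cs)\rangle = -\langle w\ast(F''(\twp(\cdot-cs))\twp_x(\cdot-cs)v^m_0(s)),\psi(\cdot-cs)\rangle + c\langle v^m_0(s),\psi_{xx}(\cdot-cs)\rangle,
\]
and the $c\,\psi_{xx}$ terms cancel exactly with the chain-rule contribution. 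Crucially, the $\langle\psi_x(\cdot-cs),dW_s\rangle$ term that appears after integration by parts comes with weight $-(1-e^{-m(t-s)})C_0^m(s)$, exactly cancelling the third (stochastic) summand from step one, so no stochastic integral survives in the final formula.

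Finally, the remaining term from the integration by parts is $-\int_0^t(1-e^{-m(t-s)})C_0^m(s)c_0^m(s)\langle\twp_x,\psi_x\rangle\,ds$; using $C_0^m c_0^m\,ds = \tfrac{1}{2}d((C_0^m)^2)$ and one more integration by parts (with vanishing boundary terms) converts this into $-\tfrac{1}{2}\int_0^t m e^{-m(t-s)}(C_0^m)^2(s)\,ds\,\langle\psi_x,\twp_x\rangle$, which is the third summand in the lemma. Combining the $\tfrac{1}{2}(v_0^m)^2$ and $C_0^m\twp_x v_0^m$ contributions into a single weighted bracket gives the stated identity. The main obstacle is executing step three carefully: tracking the sign in the adjoint identity, verifying the $c\,\psi_{xx}$ cancellation, and checking that the stochastic integrals cancel exactly rather than producing a leftover It\^{o} correction.
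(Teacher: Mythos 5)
Your proposal is correct and follows essentially the same route as the paper: variation of constants plus Fubini to produce the $(1-e^{-m(t-s)})$ weights, an It\^{o}/integration-by-parts step on the $c^m_0\langle v^m_0,\psi_x\rangle$ term with the identity $(L_s^*-c\partial_x)\psi_x(\cdot-cs)=-F''(\twp(\cdot-cs))\twp_x(\cdot-cs)\,w\ast\psi(\cdot-cs)$ obtained by differentiating $L^{\#,*}\psi=0$, exact cancellation of the stochastic integrals, and a final integration by parts turning $C^m_0 c^m_0\,ds=\tfrac12 d(C^m_0)^2$ into the last summand. All signs, boundary terms, and the absence of quadratic-covariation corrections check out.
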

{\em Proof}.
Integrating (\ref{eq:sdec1m}) we obtain
\begin{align*}
C^m_1(t)
& = \int_0^t (1-e^{-m(t-s)}) \Big(- \frac{1}{2} \langle w\ast(F''(\twp(\cdot-cs))(v^m_0)^2(s)), \psi(\cdot-cs)\rangle \\ 
& \quad +  c^m_0(s)\langle v^m_0(s),\psi_x(\cdot-cs)\rangle \Big)  ds \\
& \quad + \int_0^t (1-e^{-m(t-s)}) C^m_0(s) \langle \psi_x(\cdot-cs), dW_s\rangle.
\end{align*}
By It\^{o}'s Lemma,
\begin{align*}
& \int_0^t (1-e^{-m(t-s)}) c^m_0(s) \langle v^m_0(s), \psi_x(\cdot-cs) \rangle ds \\
& = \int_0^t m e^{-m(t-s)} C^m_0(s) \langle v^m_0(s), \psi_x(\cdot-cs) \rangle ds  \\
& \quad - \int_0^t (1-e^{-m(t-s)})  C^m_0(s)\Big( \langle v^m_0(s), (L_s^*-c\partial_x) \psi_x(\cdot-cs)\rangle \\
& \qquad + c^m_0(s) \langle \twp_x,\psi_x \rangle \Big) ds \\
& \quad - \int_0^t (1-e^{-m(t-s)}) C^m_0(s) \langle \psi_x(\cdot-cs), dW_s\rangle.
\end{align*}
Using integration by parts we obtain that
\[- \int_0^t (1-e^{-m(t-s)}) c^m_0(s) C^m_0(s) ds = \frac{1}{2}\int_0^t - me^{-m(t-s)} (C^m_0)^2(s) ds,\]
and since $ (L_s^*-c\partial_x) \psi_x(\cdot-cs) = - F''(\twp(\cdot-cs))\twp_x(\cdot-cs)w\ast\psi(\cdot-cs) $, the claim follows.
\qquad\endproof

\begin{proof}[ of Thm. \ref{thm:expansion2}]

Note first that, using Lemma \ref{lemma:C1m}, 
\begin{align*}
|C^m_1|_T 
& \leq  \|\psi\| \int \Big\|w\ast\Big(F''(\twp(\cdot-cs))\big(\frac{1}{2}(v^m_0(s))^2 -C^m_0(s) \twp_x(\cdot-cs) v^m_0(s)\big)\Big)\Big\| ds \\
& \quad + |C^m_0|_T \|\psi_x\| \int_0^t m e^{-m(t-s)} \|v^m_0(s)\| ds + \frac{1}{2} |C^m_0|_T^2 .
\end{align*}
Since 
\[v^m_0(t) = \frac{1}{\epsilon} \big(v(t) + \twp(\cdot-ct)-\twp(\cdot-\varphi^m_0(t))\big) -  r^m_1(t) = \frac{1}{\epsilon} \tilde{v}^m_0(t)-r^m_1(t),\]
using Theorem \ref{thm:expansion1} and (\ref{eq:tildevm0}) it follows that there exists a constant $\beta_1(T)$ such that on $\{\tau_{q,\epsilon}\wedge \tau_{q,\epsilon}^m =T\}$
\[\|v^m_0\|_T \leq \beta_1(T) \epsilon^{-q}.\]
Therefore there exists a constant $\beta_2(T)$ such that
\begin{equation}
\label{eq:Cm1estimate}
|C^m_1|_T \leq \beta_2(T) \epsilon^{-2q}.
\end{equation}
Set $\tilde{v}^m_1(t) = v + \twp(\cdot-ct)-\twp(\cdot-\varphi^m_1(t))$.
By Taylor's theorem there exists $\xi(x,t)$ with $|\xi| \leq |\epsilon C^m_0 + \epsilon^2 C^m_1|$ such that
\[\tilde{v}^m_1(t) = v(t) + (\epsilon C^m_0(t)+ \epsilon^2 C^m_1(t)) \twp_x(\cdot+\xi(t))\]
and it follows that there exists a constant $\beta_3(T)$ such that
\[\|\tilde{v}^m_1\|_T \leq \beta_3(T) \epsilon^{1-q}.\]

We have
\[dr^m_2(t) = \big( L_t r^m_2(t) + r^m_{2,1}(t) + r^m_{2,2}(t) + r^m_{2,3}(t)\big) dt,\]
where
\begin{align*}
\epsilon^2 r^m_{2,1}(t)
& = w\ast\Big(F(\twp(\cdot-\varphi^m_1(t))+\tilde{v}^m_1(t))-F(\twp(\cdot-\varphi^m_1(t)))\\
& \qquad -F'(\twp(\cdot-\varphi^m_1(t)))\tilde{v}^m_1(t) - \frac{1}{2}F''(\twp(\cdot-\varphi^m_1(t)))(\tilde{v}^m_1(t))^2 \Big) \\
& \quad + w\ast\Big( \big(F'(\twp(\cdot-\varphi^m_1(t)))-F'(\twp(\cdot-\varphi^m_0(t)))\big) \tilde{v}^m_1(t)\Big)\\
& \quad + w\ast\Big( \big( F'(\twp(\cdot-\varphi^m_0(t)))-F'(\twp(\cdot-ct)) \\
& \qquad + \epsilon C^m_0(t) F''(\twp(\cdot-ct))\twp_x(\cdot-ct)\big)\tilde{v}^m_1(t)\Big) \\
& \quad - \epsilon C^m_0(t) w\ast\Big( F''(\twp(\cdot-ct))\twp_x(\cdot-ct) (\tilde{v}^m_1(t)-\epsilon v^m_0(t))\Big)\\
& \quad + \frac{1}{2} w\ast\Big(\big( F''(\twp(\cdot-\varphi^m_1(t)))-F''(\twp(\cdot-ct))\big) (\tilde{v}^m_1(t))^2\Big) \\
& \quad + \frac{1}{2} w\ast\Big(F''(\twp(\cdot-ct)) ((\tilde{v}^m_1(t))^2 - \epsilon^2 (v^m_0(t))^2)\Big)\\
& = \sum_{i=1}^{6} r^m_{2,1,i}(t),\\
\epsilon^2 r^m_{2,2}(t)
& = (\epsilon c^m_0(t)+\epsilon^2 c^m_1(t)) (\twp_x(\cdot-\varphi^m_1(t))-\twp_x(\cdot-ct)) \\
& \qquad + \epsilon^2 c^m_0(t) C^m_0(t) \twp_{xx}(\cdot-ct).
\end{align*}
Now
\begin{align*}
 \epsilon^4 \|r^m_{2,1,1}(t)\|_{1+\rho_t}^2
 & \leq  \frac{1}{36} \|F^{(3)}\|_{\infty}^2(1+ K_{\rho}) \|w\|_{\infty} \|\tilde{v}^m_1(t)\|_{1+\rho_t}^2 \int (\tilde{v}^m_1)^4(x,t) dx \\
 & \leq \frac{1}{36} \|F^{(3)}\|_{\infty}^2 (1+ K_{\rho}) \|w\|_{\infty} \|\tilde{v}^m_1(t)\|_{1+\rho_t}^2 \|\tilde{v}^m_1(t)\|_{\infty}^2 \|\tilde{v}^m_1(t)\|^2 \\
 & \leq \frac{1}{36} \|F^{(3)}\|_{\infty}^2 (1+ K_{\rho}) \|w\|_{\infty} K^2 \|\tilde{v}^m_1\|_T^6.
\end{align*}
Concerning $r^m_{2,1,4}$ and $r^m_{2,1,6}$, note that there exists $\tilde{\xi}(x,t)$ such that
\begin{align*}
\tilde{v}^m_1(t) 
& = \tilde{v}^m_0(t)  + \twp(\cdot-\varphi^m_0(t))-\twp(\cdot-\varphi^m_1(t)) \\
& = \epsilon v^m_0(t) + \epsilon r^m_1(t)+\epsilon^2 C^m_1(t) \twp_x(\cdot-\varphi^m_0(t)+\tilde{\xi}),
\end{align*}
and hence, using (\ref{eq:Cm1estimate}), for some constant $\beta_4(T)$.
\[\|\tilde{v}^m_1-\epsilon v^m_0\|_T \leq \beta_4(T) \epsilon^{2-2q}.\]
We have 
\[\epsilon^2 r^m_{2,2}(t) = R^m_{2,2}(t) + \int_0^t P_{t,s}(L_s+c\partial_x)R^m_{2,2}(s)ds\]
with
\begin{align*}
R^m_{2,2}(t) 
& = -\twp(\cdot-\varphi^m_1(t))+\twp(\cdot-ct) - (\epsilon C^m_0(t)+\epsilon^2 C^m_1(t)) \twp_x(\cdot-ct)\\
& \qquad + \epsilon^2 \frac{1}{2}(C^m_0)^2(t)\twp_{xx}(\cdot-ct).
\end{align*}
Now all the terms can be estimated as in the proof of Thm. \ref{thm:expansion1} and
we obtain that there exists 
$\alpha_2(T)$ independent of $m$ and $\epsilon$ such that
\[\|r^m_2\|_T \leq \alpha_2(T) \epsilon^{1-3q}. \]
\end{proof}

\begin{remark}
Note that in the case $k=1$ (Thm. \ref{thm:expansion1}) we could also define the stopping time $\tau_{q,\epsilon}$ as an exit time of the $L^2(1+\rho_t)$-norm of $v$ and obtain that 
$ \sup_{t\leq T} \|r^m_1(t)\|_{1+\rho_t} \leq \alpha_1(T) \epsilon^{1-2q}$. This is not the case in the proof of Thm. \ref{thm:expansion2}, where we need to control $\| \tilde{v}^m_1(t)\|_{\infty}$.
\end{remark}

\subsection{Immediate Relaxation}
\label{subsection:immediaterelaxation}

We now go over to the limit $m\rightarrow \infty$, presumably corresponding to immediate relaxation to the right phase. Since all the estimates in section \ref{subsection:expansion} are independent of $m$, the expansion will translate to the limiting regime once we have computed the limits of the coefficients.

Denote by $\pi_s$ the projection onto the orthogonal complement of $\twp_x(\cdot-cs)$ in $L^2(\rho_s)$, i.e.,
\[\pi_s h = h - \langle h, \twp_x \rangle_{\rho(\cdot-cs)} \twp_x(\cdot-cs).\]
Note that while $C^m_0(t) = \int_0^t c^m_0(s) ds$ is a process of bounded variation, in the limit $m\rightarrow \infty$ we go over to a process of unbounded variation.
The convergence is only locally uniform on $(0,T)$ due to the initial jump to the right phase in the limit.

\begin{lemma}
\label{lemma:convminfty}
For any $\delta>0$, for $i=1,2$, almost surely
\[\sup_{\delta\leq t \leq T}|C^m_i(t)-C_i(t)|\xrightarrow{m\rightarrow\infty}0\]
and
\[\sup_{\delta\leq t \leq T} \norm{v^m_i(t)-v_i(t)}_{H^1(1+\rho_t)} \xrightarrow{m\rightarrow\infty} 0,\]
where $C_0(0)=0$, $v_0(0)=\eta$,  and for $t>0$,
\begin{align*}
C_0(t) & = - \langle \eta, \psi \rangle - \int_0^t \langle \psi(\cdot-cs), dW_s \rangle \\
v_0(t) & = P_{t,0} \pi_0 \eta + \int_0^t P_{t,s} \pi_s dW_s,
\end{align*}
and where 
\begin{align*}
C_1(t) 
& = -  \int_0^t \langle w\ast\Big(F''(\twp(\cdot-cs))\big(\frac{1}{2} v_0^2(s) - C_0(s) \twp_x(\cdot-cs) v_0(s) \big)\Big), \psi(\cdot-cs)\rangle ds \\
& \quad  + C_0(t) \langle v_0(t), \psi_x(\cdot-ct)\rangle -\frac{1}{2} C_0^2(t) \langle \twp_x, \psi_x \rangle,\\
v_1(t)
& = \int_0^t P_{t,s} w\ast\Big( F''(\twp(\cdot-cs)) \big(\frac{1}{2} v_0^2(s) - C_0(s) \twp_x(\cdot-cs) v_0(s)\big)\Big) ds\\
& \quad  + C_1(t) \twp_x(\cdot-ct) - \frac{1}{2}\int_0^t P_{t,s}\twp_{xx}(\cdot-cs) dC_0^2(s)
\end{align*}
\end{lemma}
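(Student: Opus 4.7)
The plan is to derive explicit or mild-solution representations for all the $m$-indexed processes and pass to the limit in them, exploiting the invariance identity $P_{t,s}\twp_x(\cdot-cs) = \twp_x(\cdot-ct)$. This identity holds because differentiating the traveling-wave equation $-c\twp_x = -\twp + w\ast F(\twp)$ in $x$ yields $L_t\twp_x(\cdot-ct) = -c\twp_{xx}(\cdot-ct) = \partial_t\twp_x(\cdot-ct)$, so $t\mapsto\twp_x(\cdot-ct)$ is a solution curve of $L_t$. Solving the linear SDE (\ref{eq:cm0}) by variation of constants and integrating gives
\begin{equation*}
C^m_0(t) = -(1-e^{-mt})\langle\eta,\psi\rangle - \int_0^t(1-e^{-m(t-s)})\langle\psi(\cdot-cs),dW_s\rangle,
\end{equation*}
so that $C^m_0(t) - C_0(t) = e^{-mt}\langle\eta,\psi\rangle + N^m(t)$, where $N^m(t):=\int_0^t e^{-m(t-s)}\langle\psi(\cdot-cs),dW_s\rangle$ satisfies $dN^m = -mN^m dt + \langle\psi(\cdot-ct),dW_t\rangle$ with $N^m(0)=0$. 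It\^o's formula on $(N^m)^2$ gives $E[(N^m(t))^2] \leq C/m$, and BDG applied to the underlying martingale $\int_0^s e^{mr}\langle\psi(\cdot-cr),dW_r\rangle$ yields $E[\sup_{\delta\leq t\leq T}|N^m(t)|^p] = O(m^{-p/2})$ for $p>2$; Borel-Cantelli along $m_k = 2^k$ then produces a.s.\ uniform convergence of $C^m_0$ to $C_0$ on $[\delta,T]$.

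Convergence of $v^m_0$ is then a direct consequence. Using the invariance identity to collapse the drift term, the mild solution reads $v^m_0(t) = P_{t,0}\eta + C^m_0(t)\twp_x(\cdot-ct) + \int_0^t P_{t,s}dW_s$. Expanding $v_0$ via $\pi_s h = h - \langle h,\psi(\cdot-cs)\rangle\twp_x(\cdot-cs)$ and using the definition of $C_0$ produces the identical representation with $C^m_0$ replaced by $C_0$, so $v^m_0(t) - v_0(t) = (C^m_0(t) - C_0(t))\twp_x(\cdot-ct)$ and the claim follows from the first step together with $\|\twp_x\|_{H^1(1+\rho)} < \infty$.

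For $C^m_1$ I pass to the limit term by term in Lemma \ref{lemma:C1m}. In the first integral, $1-e^{-m(t-s)}\to 1$ and the $(v^m_0, C^m_0)$-dependent integrand converges uniformly on $[\delta,T]$ with a uniform-in-$m$ dominating bound coming from the $L^p$-estimates developed in section \ref{subsection:expansion}; dominated convergence applies. The second and third integrals contain the approximate-identity kernel $me^{-m(t-s)}$, which concentrates at $s=t$; after splitting $\int_0^t = \int_0^{\delta/2} + \int_{\delta/2}^t$ to discard the region of non-uniform convergence near $s=0$ (the first piece being $O(e^{-m\delta/2})$), a standard mollification argument delivers the limits $C_0(t)\langle v_0(t),\psi_x(\cdot-ct)\rangle$ and $\tfrac{1}{2}C_0^2(t)\langle\twp_x,\psi_x\rangle$ respectively. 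For $v^m_1$, the mild solution combined with the invariance identity (which turns $c^m_1(s)\twp_x(\cdot-cs)$ into $C^m_1(t)\twp_x(\cdot-ct)$) and the identity $c^m_0 C^m_0\,ds = \tfrac{1}{2}d(C^m_0)^2$ yields
\begin{align*}
v^m_1(t) &= \int_0^t P_{t,s}\,w\ast\!\Bigl(F''(\twp(\cdot-cs))\bigl(\tfrac{1}{2}(v^m_0)^2(s) - C^m_0(s)\twp_x(\cdot-cs)v^m_0(s)\bigr)\!\Bigr)\,ds \\
&\quad + C^m_1(t)\twp_x(\cdot-ct) - \tfrac{1}{2}\int_0^t P_{t,s}\twp_{xx}(\cdot-cs)\,d(C^m_0)^2(s),
\end{align*}
whose first two terms converge by the preceding arguments.

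The main obstacle is the last integral of $v^m_1$: showing
\begin{equation*}
\int_0^t P_{t,s}\twp_{xx}(\cdot-cs)\,d(C^m_0)^2(s) \longrightarrow \int_0^t P_{t,s}\twp_{xx}(\cdot-cs)\,dC_0^2(s)
\end{equation*}
in $H^1(1+\rho_t)$, uniformly on $[\delta,T]$, where the right-hand side is an It\^o differential giving $dC_0^2 = 2C_0\,dC_0 + \|Q^{1/2}\psi(\cdot-cs)\|^2\,ds$. For each $m$ the measure $d(C^m_0)^2$ is absolutely continuous, while the limit is a semimartingale differential; the It\^o correction must emerge from the passage to the limit. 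I plan to integrate by parts to shift $d(C^m_0)^2$ onto the smooth integrand $g(s):=P_{t,s}\twp_{xx}(\cdot-cs)$, obtaining $g(t)(C^m_0)^2(t) - \int_0^t (C^m_0)^2(s)\,\partial_s g(s)\,ds$, where $\partial_s g = -P_{t,s}(L_s+c\partial_x)\twp_{xx}(\cdot-cs)$. Passing to the uniform limit—using $(C^m_0)^2 \to C_0^2$ on $[\delta,T]$ and dominated convergence on $[0,\delta]$—and then applying the It\^o product rule to $g\cdot C_0^2$ on the semimartingale side recovers the It\^o-corrected target integral. The jump of $C_0$ at $t=0$, the initial relaxation to the correct phase $-\langle\eta,\psi\rangle$, appears naturally as the boundary contribution $g(0)\langle\eta,\psi\rangle^2$ in this computation, which is also the reason why the convergence can only be locally uniform on $(0,T]$.
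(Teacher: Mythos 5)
Your proposal is correct in structure and, for the parts concerning $v^m_0$, $C^m_1$, and $v^m_1$, follows essentially the same route as the paper: the identity $P_{t,s}\twp_x(\cdot-cs)=\twp_x(\cdot-ct)$ collapses $v^m_0-v_0$ to $(C^m_0-C_0)\twp_x(\cdot-ct)$; the $C^m_1$ limit is taken term by term in Lemma \ref{lemma:C1m} via the approximate identity $me^{-m(t-s)}$; and for $v^m_1$ the paper does exactly the integration by parts you describe, rewriting $-c^m_0 C^m_0 P_{t,s}\twp_{xx}+c^m_1 P_{t,s}\twp_x$ as $\bigl(\tfrac{d}{ds}P_{t,s}+P_{t,s}(L_s+c\partial_x)\bigr)\bigl(-\tfrac12 (C^m_0)^2\twp_{xx}+C^m_1\twp_x\bigr)$, passing to the limit in the resulting Lebesgue-integral representation, and only then reassembling $\int_0^t P_{t,s}\twp_{xx}(\cdot-cs)\,dC_0^2(s)$ by It\^o's lemma. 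The one step where you genuinely diverge is the a.s.\ uniform convergence of $C^m_0$. The paper argues pathwise: it integrates by parts to express $S_t=\int_0^te^{-m(t-s)}\langle\psi(\cdot-cs),dW_s\rangle$ in terms of the continuous process $t\mapsto\langle\psi(\cdot-ct),W_t\rangle$ and uses its a.s.\ $\beta$-H\"older continuity to get the deterministic-in-$m$ bound $|S_t|\le M_\beta(T,\omega)m^{-\beta}\Gamma(1+\beta)+O(m^{-1}+e^{-mt})$, valid simultaneously for all $m$, which gives convergence along the full family $m\to\infty$. Your moment/BDG/Borel--Cantelli argument is a legitimate alternative, but as written it has two caveats: (a) it only yields a.s.\ convergence along the chosen subsequence $m_k=2^k$ (or along integers if the tail probabilities are summable), whereas the lemma asserts the limit over all $m\to\infty$ --- you would need an interpolation between consecutive $m_k$, or simply the paper's pathwise bound, to close this; and (b) the claimed rate $E[\sup_{\delta\le t\le T}|N^m(t)|^p]=O(m^{-p/2})$ is optimistic for the supremum --- a chaining over intervals of length $1/m$ gives $O(m^{1-p/2})$, which still suffices for $p>4$ but should be stated carefully. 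Neither caveat affects the downstream use in Theorem \ref{thm:immediaterelax}, where a subsequence would do, but the paper's H\"older-continuity argument is both sharper and shorter here.
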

We postpone the proof to the end of this section.

\begin{theorem}
\label{thm:immediaterelax}
Let $\tau_{q,\epsilon}$ be as in (\ref{eq:tau}) and set $\tau_{q,\epsilon}^{\infty} = \inf \{0\leq t\leq T: |C_0(t)|\geq \epsilon^{-q}\}$. Then on $\{\tau_{q,\epsilon} \wedge \tau_{q,\epsilon}^{\infty}=T\}$,
\[u(x,t) = \twp(x-ct-\epsilon C_0(t)) + \epsilon v_0(x,t) + \epsilon r_1(x,t),\]
and if $F\in \mathcal{C}^3$, then 
\[u(x,t) = \twp(x-ct-\epsilon C_0(t)-\epsilon^2 C_1(t)) + \epsilon v_0(x,t) + \epsilon^2 v_1(x,t) + \epsilon^2 r_2(x,t),\]
where for $k=1,2$,
\[\norm{r_k}_T \leq \alpha_k(T) \epsilon^{1-(k+1)q},\]
with $\alpha_k$ as in Thms. \ref{thm:expansion1} and \ref{thm:expansion2}, and \[P(\tau_{q,\epsilon} \wedge \tau_{q,\epsilon}^{\infty}=T) \xrightarrow{\epsilon\rightarrow 0} 1.\]
\end{theorem}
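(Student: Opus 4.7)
The plan is to deduce the theorem from Theorems \ref{thm:expansion1} and \ref{thm:expansion2} by passing to the limit $m\to\infty$, using the $m$-independence of the constants $\alpha_k(T)$ and the locally uniform convergence from Lemma \ref{lemma:convminfty}. Define the remainders pathwise by
\[\epsilon^k r_k(t) := u(t) - \twp\bigl(\cdot - ct - \textstyle\sum_{i=0}^{k-1}\epsilon^{i+1} C_i(t)\bigr) - \textstyle\sum_{i=0}^{k-1}\epsilon^{i+1}v_i(t).\]
Subtracting the identity satisfied by $r^m_k$ and applying Taylor's theorem to $\twp(\cdot-\varphi^m_{k-1}(t)) - \twp(\cdot-\varphi_{k-1}(t))$, combined with the bound (\ref{eq:rho3}), Lemma \ref{lemma:convminfty} immediately gives $\sup_{\delta\leq t\leq T}\|r_k(t)-r^m_k(t)\|_{H^1(1+\rho_t)}\to 0$ almost surely as $m\to\infty$, for every $\delta>0$.

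The key step is to show that on $\Omega_\epsilon:=\{\tau_{q,\epsilon}\wedge\tau_{q,\epsilon}^\infty=T\}$ we have $\tau^m_{q,\epsilon}(\omega)=T$ for all $m$ large enough (depending on $\omega$), so that Theorems \ref{thm:expansion1} and \ref{thm:expansion2} apply and deliver the $m$-uniform bound $\|r^m_k(\omega)\|_T\leq\alpha_k(T)\epsilon^{1-(k+1)q}$. For this I solve (\ref{eq:cm0}) explicitly and apply stochastic integration by parts to get
\[C^m_0(t) = -\langle\eta,\psi\rangle(1-e^{-mt}) - m\int_0^t e^{-m(t-s)} M_s\, ds,\qquad M_s:=\int_0^s\langle\psi(\cdot-cu), dW_u\rangle,\]
which yields the uniform-in-$m$ bound $|C^m_0(t)|\leq |\langle\eta,\psi\rangle|+\sup_{s\leq t}|M_s|$. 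Fix $\omega\in\Omega_\epsilon$ and take $\epsilon$ so small that $|\langle\eta,\psi\rangle|<\epsilon^{-q}/2$; by path-continuity of $M$ there is $\delta=\delta(\omega,\epsilon)>0$ with $\sup_{s\leq\delta}|M_s|<\epsilon^{-q}/2$, which gives $|C^m_0|<\epsilon^{-q}$ on $[0,\delta]$ for every $m$. On $[\delta,T]$, since $|C_0|_T<\epsilon^{-q}$ strictly, Lemma \ref{lemma:convminfty} yields $|C^m_0|<\epsilon^{-q}$ for large $m$. Combining, $\tau^m_{q,\epsilon}(\omega)=T$ for large $m$; passing to the limit via the previous paragraph gives $\|r_k\|_{[\delta',T]}\leq\alpha_k(T)\epsilon^{1-(k+1)q}$ for every $\delta'>0$. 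A direct computation using $v_0(0)=\eta$ and $C_0(0)=0$ shows $r_k(0)=0$, so the bound extends to all of $[0,T]$.

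For the probability statement, bound the two stopping times separately. The Hilbert-Schmidt assumption on $Q^{1/2}$ gives $E[|C_0|_T^2]<\infty$, so by Doob's inequality $P(\tau^\infty_{q,\epsilon}<T)\leq\epsilon^{2q}E[|C_0|_T^2]\to 0$. For $\tau_{q,\epsilon}$, fix any $m>0$ and write $P(\tau_{q,\epsilon}<T)\leq P(\tau_{q,\epsilon}\wedge\tau^m_{q,\epsilon}=\tau_{q,\epsilon}<T)+P(\tau^m_{q,\epsilon}<T)$; both summands tend to $0$ with $\epsilon$ by the Markov-type argument already used at the end of the proof of Theorem \ref{thm:expansion1}. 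Hence $P(\Omega_\epsilon)\to 1$. The main obstacle is the discontinuity of $C_0$, $v_0$ at $t=0$ (absent for the finite-$m$ processes), which forces the noise-dependent choice of $\delta$ in paragraph 2 and rules out any naive uniform-in-$[0,T]$ passage to the limit.
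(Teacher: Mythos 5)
Your proposal is correct and follows essentially the same route as the paper: establish a uniform-in-$m$ bound on $C^m_0$ near $t=0$ via path continuity of the Wiener process (so that $\tau^m_{q,\epsilon}=T$ for large $m$ on the event of interest), then pass to the limit $m\to\infty$ in the expansions of Theorems \ref{thm:expansion1} and \ref{thm:expansion2} using Lemma \ref{lemma:convminfty} and the $m$-independence of $\alpha_k(T)$, and prove the probability statement as in Theorem \ref{thm:expansion1}. The only cosmetic difference is that you obtain the uniform small-time bound by stochastic integration by parts against the martingale $M_s$, whereas the paper rewrites $C^m_0$ via It\^{o}'s lemma in terms of $\langle\psi(\cdot-cs),W_s\rangle$; both rest on the same continuity argument.
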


\begin{proof}
Let $0<t < \tau_{q,\epsilon} \wedge \tau_{q,\epsilon}^{\infty}$.
Integrating (\ref{eq:cm0}) we obtain that
\begin{equation}
\label{eq:Cm0}
C^m_0(t) = -(1-e^{-mt} ) \langle \eta,\psi\rangle - \int_0^t (1-e^{-m(t-s)}) \langle \psi(\cdot-cs),dW_s\rangle .
\end{equation}
By It\^{o}'s Lemma,
\begin{align*}
C^m_0(t) & = -(1-e^{-mt} ) \langle \eta,\psi\rangle  -\int_0^t me^{-m(t-s)}  \langle \psi(\cdot-cs),W_s\rangle ds\\
& \qquad - c \int_0^t  (1-e^{-m(t-s)}) \langle \psi_x(\cdot-cs),W_s\rangle ds.
\end{align*}
Therefore, for $0<\delta<t$,
\[|C^m_0|_{\delta} \leq |\langle \eta,\psi\rangle| + \|\psi\| \|W\|_{\delta} + c \delta \|\psi_x\|\|W\|_{\delta} \xrightarrow{\delta\rightarrow 0} |\langle \eta,\psi\rangle| < \epsilon^{-q}. \]
Since for any $\delta>0$
$\sup_{\delta \leq s\leq t} |C^m_0(s)| \xrightarrow{m\rightarrow\infty} \sup_{\delta\leq s \leq t} |C_0(s)| < \epsilon^{-q} $ it follows that also $ t < \tau_{q,\epsilon} \wedge \tau_{q,\epsilon}^m $ for sufficiently large $m$.
Therefore, using Theorem \ref{thm:expansion1} and Lemma \ref{lemma:convminfty},
\begin{align*}
	 \norm{\epsilon r_1(t)}_{H^1(1+\rho_t)} 
	& \leq  \norm{u(t)-\twp(\cdot-ct-\epsilon C^m_0(t))-\epsilon v^m_0(t)}_{H^1(1+\rho_t)} \\
	& \quad {} + \norm{\twp(\cdot-ct-\epsilon C^m_0(t))-\twp(\cdot-ct-\epsilon C_0(t))}_{H^1(1+\rho_t)}\\
	& \quad+ \epsilon \norm{v^m_0(t)- v_0(t)}_{H^1(1+\rho_t)}\\
	& \leq \alpha_1(T) \epsilon^{2-2q} + \epsilon |C^m_0(t)-C_0(t)| (L_{\rho} \vee e^{2M\epsilon^{1-q}})^{\frac{1}{2}} \normH{\twp_x} \\
	& \quad {} + \epsilon \norm{v^m_0(t)- v_0(t)}_{H^1(1+\rho_t)}\\
	& \xrightarrow{m\rightarrow\infty} \alpha_1(T) \epsilon^{2-2q} , a.s.
\end{align*}
Thus, on $\{\tau_{q,\epsilon} \wedge \tau_{q,\epsilon}^{\infty}=T\}$, $\|r_1\|_T \leq \alpha_1(T) \epsilon^{2-2q}$.

The proof for the higher order expansion is analogous. 
\[P(\tau_{q,\epsilon} \wedge \tau_{q,\epsilon}^{\infty}=T) \xrightarrow{\epsilon\rightarrow 0} 1\]
is proven as in Theorem \ref{thm:expansion1}.
\qquad \end{proof}

The term $- \langle \eta, \psi \rangle$ in $C_0$ accounts for the initial phase difference between $u(0)$ and $\twp$.
We have
\[\ Var(C_0(t)) = \int_0^t \langle \psi(\cdot-cs), Q \psi(\cdot-cs) \rangle ds \approx \langle \psi, Q\psi\rangle t\]
if the correlations are roughly translation invariant (they cannot be translation invariant since $Q$ is of finite trace).
This is in accordance with the analysis of Bressloff and Webber in \cite{bressloffwebber}.

Note that for $t>0$,
\begin{equation}
\label{eq:v0orthogonal}
\begin{split}
\langle v_0(t), \twp_x(\cdot-ct)\rangle_{\rho_t} 
& = \langle P_{t,0} \pi_0 \eta , \psi(\cdot-ct) \rangle + \langle \int_0^t P_{t,s} \pi_s dW_s, \psi(\cdot-ct)\rangle \\
& = \langle \pi_0\eta, P_{t,0}^*(\psi(\cdot-ct))\rangle + \int_0^t \langle P_{t,s}^* \psi(\cdot-ct), \pi_s dW_s\rangle\\
& = \langle \pi_0\eta, \psi\rangle + \int_0^t \langle \psi(\cdot-cs), \pi_s dW_s\rangle =0.
\end{split}
\end{equation}
In the frozen wave setting, $v_0^{\#}$ is thus orthogonal to $\twp_x$ in $L^2(\rho)$. Recall that the frozen wave operator $L^{\#}$ generates a contraction semigroup on $\twp_x^{\perp}$. For $t>0$ we can therefore write
\[v_0^{\#}(t) = P^{\#}_t \pi_0 \eta + \int_0^t P^{\#}_{t-s}\Phi_s \pi_s dW_s = P^{\#}_t \pi_0 \eta + \int_0^t P^{\#}_{t-s} \pi_0 \Phi_s dW_s.\]
Using (\ref{eq:contractionsg}) it follows that
\[\|v_0(t)\|_{\rho_t} = \|v_0^{\#}(t)\|_{\rho} \leq e^{-\kappa t} \|\eta\|_{\rho}  + \bigg\| \int_0^t P^{\#}_{t-s} \pi_0 \Phi_s dW_s\bigg\|_{\rho} \]
and hence
\[	E(\|v_0(t)\|_{\rho_t}^2) \leq 2 e^{-2\kappa t} \norm{\eta}_{\rho}^2  +2 \int_0^t \|P^{\#}_{t-s} \pi_0 \Phi_s Q^{\frac{1}{2}}\|_{L_2(L^2,L^2(\rho))}^2 ds.\]
Let $(e_k)$ be an orthonormal basis of $L^2$. We have
\begin{align*}
\|P^{\#}_{t-s} \pi_0 \Phi_s Q^{\frac{1}{2}}\|_{L_2(L^2,L^2(\rho))}^2
& = \sum_k \|P^{\#}_{t-s} \pi_0 \Phi_s Q^{\frac{1}{2}}e_k\|_{\rho}^2 \leq e^{-\kappa(t-s)} \sum_k \| Q^{\frac{1}{2}}e_k\|_{\rho_s}^2 \\
& \leq L_{\rho} e^{-2\kappa(t-s)} \sum_k \| Q^{\frac{1}{2}}e_k\|_{\rho}^2 \leq L_{\rho} e^{-\kappa(t-s)} tr(Q) 
\end{align*}
and thus
\[	E(\|v_0^{\#}(t)\|_{\rho}^2) \leq 2 e^{-2\kappa t} \norm{\eta}_{\rho}^2  + 2L_{\rho} \frac{1}{2\kappa} (1-e^{-2\kappa t}) tr(Q) \xrightarrow{t\rightarrow \infty} \frac{L_{\rho} tr(Q)}{\kappa}.\]
$v^{\#}_0	$ is thus a stationary Ornstein-Uhlenbeck process on $\twp_x^{\perp}$.

So far it is not clear that the expansion in Theorem \ref{thm:immediaterelax} gives the right description of the influence of the noise on the traveling wave. A different choice of $C_0$ would yield another expansion and we are left to justify that our particular choice of $C_0$ provides the right picture.

Set $\varphi_k(t) = ct+\epsilon C_0(t) + \ldots + \epsilon^k C_{k}(t)$.
The $C_k$ describe the phase shift caused by the noise to order $\epsilon^{k+1}$ in the sense of the following proposition.

\begin{proposition}
\label{prop:optimalphase} For $t< \tau_{q,\epsilon}\wedge \tau_{q,\epsilon}^{\infty}$,
$a \mapsto \|u-\twp(\cdot-ct-\epsilon a)\|_{\rho_t}$ is locally minimal to order $\epsilon$ at $a=C_0(t)$.

$a \mapsto \|u-\twp(\cdot-ct-\epsilon C_0(t) -\epsilon^2 a)\|_{\rho_t(\cdot-\epsilon C_0(t))}$ is locally minimal to order $\epsilon^2$ at $a=C_1(t)$.
\end{proposition}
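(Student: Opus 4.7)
The plan is to exploit the expansion from Theorem~\ref{thm:immediaterelax} together with the orthogonality relation (\ref{eq:v0orthogonal}) and the defining ODEs of $C_0$, $C_1$ to show that, at the claimed phases, the derivative of the squared $L^2(\rho_t)$-distance in $a$ is of strictly higher order than the natural scale. In both parts the key idea is the same: the shift $\epsilon C_0$ (resp.\ $\epsilon^2 C_1$) is precisely designed to cancel the leading-order projection of $u-\twp(\cdot-\varphi)$ onto $\twp_x$ in $L^2(\rho_t)$.

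For the first claim I would differentiate $N_1(a):=\|u-\twp(\cdot-ct-\epsilon a)\|^2_{\rho_t}$ to obtain
\[N_1'(a) = 2\epsilon \langle u-\twp(\cdot-ct-\epsilon a), \twp_x(\cdot-ct-\epsilon a)\rangle_{\rho_t}.\]
Substituting the expansion $u-\twp(\cdot-ct-\epsilon C_0(t)) = \epsilon v_0(t) + \epsilon r_1(t)$ of Theorem~\ref{thm:immediaterelax} gives
\[N_1'(C_0(t)) = 2\epsilon^2\langle v_0(t)+r_1(t), \twp_x(\cdot-ct-\epsilon C_0(t))\rangle_{\rho_t}.\]
A Taylor expansion $\twp_x(\cdot-ct-\epsilon C_0) = \twp_x(\cdot-ct) + O(\epsilon)$, together with the orthogonality relation (\ref{eq:v0orthogonal}), the bound $\|r_1\|_T\le \alpha_1(T)\epsilon^{1-2q}$, and the growth estimate (\ref{eq:rho3}) controlling the shift of $\rho_t$, then yields $N_1'(C_0(t))=O(\epsilon^{3-2q})=o(\epsilon^2)$ on $\{\tau_{q,\epsilon}\wedge\tau_{q,\epsilon}^\infty=T\}$ for $q<1/2$, which is the claimed local minimality to order $\epsilon$.

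For the second claim the setup is analogous but sharper. Writing $\mu_t(x):=\rho_t(x-\epsilon C_0(t))$, I would differentiate $N_2(a):=\|u-\twp(\cdot-ct-\epsilon C_0-\epsilon^2 a)\|^2_{\mu_t}$ and substitute $u-\twp(\cdot-\varphi_1)=\epsilon v_0+\epsilon^2 v_1+\epsilon^2 r_2$ to obtain
\[N_2'(C_1(t)) = 2\epsilon^3\langle v_0,\twp_x(\cdot-\varphi_1)\rangle_{\mu_t} + 2\epsilon^4\langle v_1,\twp_x(\cdot-\varphi_1)\rangle_{\mu_t} + O(\epsilon^{5-3q}).\]
Changing variables $y=x-\epsilon C_0$, Taylor expanding in $\epsilon$, using (\ref{eq:v0orthogonal}), and integrating by parts (noting $\rho\twp_x=\psi$) gives
\[\langle v_0, \twp_x(\cdot-\varphi_1)\rangle_{\mu_t} = -\epsilon C_0(t)\langle v_0(t),\psi_x(\cdot-ct)\rangle + O(\epsilon^2),\]
so that the $v_0$-contribution to $N_2'(C_1(t))$ equals $-2\epsilon^4 C_0(t)\langle v_0(t),\psi_x(\cdot-ct)\rangle + O(\epsilon^5)$.

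The heart of the proof is then to establish the identity
\[\langle v_1(t), \twp_x(\cdot-ct)\rangle_{\rho_t} = C_0(t)\langle v_0(t),\psi_x(\cdot-ct)\rangle,\]
which forces the $v_1$-contribution to cancel the $v_0$-contribution exactly at order $\epsilon^4$, leaving $N_2'(C_1(t))=O(\epsilon^{5-3q})=o(\epsilon^3)$. To derive this identity I would start from the mild representation of $v_1(t)$ in Lemma~\ref{lemma:convminfty} and pair each of its three summands with $\twp_x(\cdot-ct)$ in $L^2(\rho_t)$. The crucial input is that since $L^{\#,*}\psi=0$ the evolution $P_{t,s}$ preserves $\twp_x$ in the weighted inner product, i.e.\ $\langle P_{t,s}h, \twp_x(\cdot-ct)\rangle_{\rho_t} = \langle h, \twp_x(\cdot-cs)\rangle_{\rho_s}$; combined with the normalization $\|\twp_x\|_\rho^2=\langle\twp_x,\psi\rangle=1$ and the integration by parts $\langle\twp_{xx},\twp_x\rangle_\rho=-\langle\twp_x,\psi_x\rangle$, this reduces the three pairings to the three terms appearing in the explicit formula for $C_1(t)$, and their sum collapses to $C_0(t)\langle v_0(t),\psi_x(\cdot-ct)\rangle$. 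I expect this bookkeeping step — matching the three summands in the mild representation of $v_1$ against the three summands in the definition of $C_1$ — to be the main technical obstacle; the remaining estimates are routine Taylor expansions combined with the rest-term bounds of Theorem~\ref{thm:immediaterelax}.
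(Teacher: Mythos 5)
Your proposal follows essentially the same route as the paper: differentiate the squared weighted distance, insert the expansion from Theorem \ref{thm:immediaterelax}, use the orthogonality (\ref{eq:v0orthogonal}) to kill the leading term, and for the second part establish the identity $\langle v_1(t),\psi(\cdot-ct)\rangle = C_0(t)\langle v_0(t),\psi_x(\cdot-ct)\rangle$ by pairing the three summands of the mild representation of $v_1$ from Lemma \ref{lemma:convminfty} against $\psi(\cdot-ct)$ (using $P_{t,s}^*\psi(\cdot-ct)=\psi(\cdot-cs)$, $\langle\twp_x,\psi\rangle=1$, and $\langle\twp_{xx},\psi\rangle=-\langle\twp_x,\psi_x\rangle$) so that it cancels the $-C_0\langle v_0,\psi_x\rangle$ term coming from Taylor-expanding $\twp_x(\cdot-\varphi_1)\rho_t(\cdot-\epsilon C_0)$. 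This is exactly the paper's cancellation. The one omission is that you only verify that the first derivative in $a$ vanishes to the relevant order; to conclude \emph{local minimality} rather than mere criticality you must also check that the second derivative is positive at leading order, namely $\frac{1}{2}\frac{d^2}{da^2}\big|_{a=C_0(t)}\|\cdot\|^2_{\rho_t}=\epsilon^2\langle\twp_x,\psi\rangle+o(\epsilon^2)=\epsilon^2+o(\epsilon^2)$ (and $\epsilon^4+o(\epsilon^4)$ in the second part), as the paper does; note also that against this scale your remainder bound $O(\epsilon^{5-3q})$ should be read as $o(\epsilon^4)$ (which it is for $q<1/3$), not merely $o(\epsilon^3)$.
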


\begin{proof}
Note that, using (\ref{eq:v0orthogonal}) and Thm. \ref{thm:expansion1},
\begin{align*}
 \frac{1}{2}\frac{d}{da}\bigg|_{a=C_0(t)} \|u(t)-\twp(\cdot-ct-\epsilon a)\|_{\rho_t}^2
& = \epsilon^2 \langle v_0(t)+r_1(t), \twp_x(\cdot-\varphi_0(t))\rangle_{\rho_t} \\
& = \epsilon^2 \langle v_0(t), \psi(\cdot-ct) \rangle + o(\epsilon^2) = o(\epsilon^2)
\end{align*}
and
\[\frac{1}{2}\frac{d^2}{da^2}\bigg|_{a=C_0(t)} \|u(t)-\twp(\cdot-ct-\epsilon a)\|_{\rho_t}^2 = \epsilon^2 \langle \twp_x, \psi\rangle + o(\epsilon^2) = \epsilon^2 + o(\epsilon^2).\]
$C_0$ is thus such that $\|u(t)-\twp(\cdot-ct-\epsilon C_0(t))\|_{\rho_t}$ is locally minimal to order $\epsilon$.
Similarly we have that, using Thm. \ref{thm:expansion2},
\begin{align*}
& \frac{1}{2}\frac{d}{da}\bigg|_{a=C_1(t)} \|u(t)-\twp(\cdot-ct-\epsilon C_0(t)-\epsilon^2 a)\|_{\rho_t(\cdot-\epsilon C_0(t))}^2 \\
& = \epsilon^3 \langle v_0(t)+ \epsilon v_1(t)+ \epsilon r_2(t), \twp_x(\cdot-\varphi_1(t))\rangle_{\rho_t(\cdot-\epsilon C_0(t))} \\
& =  \epsilon^4  \big( - C_0(t) \langle v_0(t), \psi_x(\cdot-ct)\rangle + \langle v_1(t), \psi(\cdot-ct)\rangle\big) +  o(\epsilon^4).
\end{align*}
Note that 
\begin{align*}
& \langle v_1(t), \psi(\cdot-ct)\rangle \\
& =\int_0^t \langle w\ast\Big(F''(\twp(\cdot-cs))\big(\frac{1}{2}v_0^2(s)- C_0(s)\twp_x(\cdot-cs)v_0(s)\big)\Big), \psi(\cdot-cs)\rangle ds \\
& \qquad {} + C_1(t)  + \frac{1}{2}\langle \twp_x, \psi_x \rangle C_0^2(t)\\
& = C_0(t) \langle v_0(t), \psi_x(\cdot-ct).
\end{align*}
We thus obtain
\[\frac{1}{2}\frac{d}{da}\bigg|_{a=C_1(t)} \|u(t)-\twp(\cdot-ct-\epsilon C_0(t)-\epsilon^2 a)\|_{\rho_t(\cdot-\epsilon C_0(t))}^2 = o(\epsilon^4)\]
and
\[\frac{1}{2}\frac{d^2}{da^2}\bigg|_{a=C_1(t)} \|u-\twp(\cdot-ct-\epsilon C_0(t)-\epsilon^2 a\|_{\rho_t(\cdot-\epsilon C_0(t))}^2 = \epsilon^4 + o(\epsilon^4). \]
\end{proof}

Together with Proposition \ref{prop:optimalphase} and the properties of $v_0^{\#}$, the expansion expresses the stability of the traveling wave under the noise. With large probability, up to the time horizon $T$, the stochastic solution can be described as a wave profile moving at an adapted speed with stationary fluctuations around it.

\begin{remark}
The spectral gap of $L^{\#} $ expresses linear stability properties. The control over $\|v_0(t)\|_{\rho_t}$ allows us to derive local stability up to the time horizon $T$, since the rest terms are of smaller order. The main problem in going over to larger time scales is that we lose control of the $L^2$-norm in estimates such as
\[ \|w\ast v^2\|_{\rho}^2 \leq K_{\rho} \|v\|_{\rho}\|v\|.\] 
\end{remark}

{\em Proof of Lemma \ref{lemma:convminfty}}.
Using (\ref{eq:Cm0}) we obtain that for $t>0$,
\[C^m_0(t)-C_0(t) = e^{-mt} \langle \eta, \psi \rangle + \int_0^t e^{-m(t-s)} \langle \psi(\cdot-cs), dW_s\rangle =:e^{-mt} \langle \eta, \psi \rangle + S_t.\]
By It\^{o}'s Lemma,
\begin{align*}
	S_t 
	&= \langle \psi(\cdot-ct), W_t \rangle - \int_0^t m e^{-m(t-s)} \langle \psi(\cdot-cs), W_s\rangle ds \\
	& \qquad + \int_0^t c e^{-m(t-s)} \langle \psi_x(\cdot-cs), W_s\rangle ds \\
	&= \int_0^t me^{-m(t-s)} \big( \langle \psi(\cdot-ct), W_t \rangle - \langle \psi(\cdot-cs), W_s\rangle \big) ds + e^{-mt} \langle \psi(\cdot-ct), W_t\rangle \\
	& \qquad {} + \int_0^t c e^{-m(t-s)} \langle \psi_x(\cdot-cs), W_s\rangle ds.
\end{align*}
Note that by the H\"{o}lder continuity of $t\rightarrow \langle \psi(\cdot-ct), W_t\rangle$, for any $0 <\beta < \frac{1}{2}$,  $M_{\beta}(T,\omega):= \sup_{|t-s|\leq T} \frac{|\langle \psi(\cdot-ct), W_t \rangle - \langle \psi(\cdot-cs), W_s\rangle|}{|t-s|^{\beta}} < \infty$ almost surely (cf. \cite{daprato}, Thm. 3.3).
We can thus estimate
\begin{align*}
	& \Big| \int_0^t m e^{-m(t-s)} \big( \langle \psi(\cdot-ct), W_t \rangle - \langle \psi(\cdot-cs), W_s\rangle \big) ds \Big| \\
	& \leq M_{\beta}(T,\omega) \int_0^t m e^{-m(t-s)} (t-s)^{\beta} ds \\
	& \leq M_{\beta}(T,\omega) \frac{1}{m^{\beta}} \int_0^{\infty} e^{-r} r^{\beta} dr = M_{\beta}(T,\omega) \frac{1}{m^{\beta}} \Gamma(1+\beta),
\end{align*}
where $\Gamma(t) =  \int_0^{\infty} x^{t-1} e^{-x} dx$ is the gamma function,
and we obtain that
\begin{align*}
	 |S_t| 
	& \leq M_{\beta}(T,\omega) \frac{1}{m^{\beta}} \Gamma(1+\beta) + (e^{-mt} \norm{\psi} + \frac{c}{m} \norm{\psi_x}) \sup_{0\leq s\leq t}\|W(s)\| .
\end{align*}
Thus,
\begin{align*}
& \sup_{\delta\leq t \leq T}|C^m_0(t)-C_0(t)|\\
& \leq e^{-m\delta} \|\psi\| \big(\|\eta\|  + \sup_{0 \leq t\leq T }\|W(t)\| \big) \\
& \qquad {} + \frac{c}{m} \norm{\psi_x} \sup_{0\leq t\leq T}\|W(t)\| + \frac{1}{m^{\beta}} M_{\beta}(T,\omega)  \Gamma(1+\beta) \xrightarrow{m\rightarrow\infty}0, a.s.
\end{align*}

Now we consider $v^m_0$. Since for $s\leq t$, $P_{t,s}\twp_x(\cdot-cs) = \twp_x(\cdot-ct)$, we have for $t>0$
\begin{align*}
v^m_0(t)
& = P_{t,0} \eta + \int_0^t c^m_0(s) P_{t,s} \twp_x(\cdot-cs) ds + \int_0^t P_{t,s} dW_s \\
& = P_{t,0} \pi_0 \eta + \langle \eta, \psi\rangle \twp_x(\cdot-ct) + C^m_0(t) \twp_x(\cdot-ct)\\
& \quad  + \int_0^t P_{t,s} \pi_s dW_s + \int_0^t \langle \psi(\cdot-cs), dW_s\rangle \twp_x(\cdot-ct) \\
& = v_0(t) + (C^m_0(t)-C_0(t))\twp_x(\cdot-ct),
\end{align*}
and hence
\begin{align*}
	& \sup_{\delta\leq t \leq T} \norm{v^m_0(t)-v_0(t)}_{H^1(1+\rho_t)} \\
	& \leq \sup_{\delta\leq t \leq T} |C^m_0(t)-C_0(t)| \norm{\twp_x}_{H^1(1+\rho)} \xrightarrow{m\rightarrow\infty} 0, a.s.
\end{align*}

The convergence of $C^m_1$ follows from Lemma \ref{lemma:C1m} and the convergence of $C^m_0$ and $v^m_0$.

Using (\ref{eq:sdev1m}) and
\begin{align*}
&   - c^m_0(s) C^m_0(s) P_{t,s} \twp_{xx}(\cdot-cs) + c^m_1(s) P_{t,s}\twp_x(\cdot-cs) \\
& = \big(\frac{d}{ds} P_{t,s} + P_{t,s}(L_s+c\partial_x) \big) \big( - \frac{1}{2}(C^m_0)^2(s) \twp_{xx}(\cdot-cs) + C^m_1(s) \twp_x(\cdot-cs)\big)
\end{align*}
we obtain
\begin{align*}
v^m_1(t)
& = \int_0^t P_{t,s} w\ast\Big(F''(\twp(\cdot-cs)) \big(\frac{1}{2}(v^m_0)^2(s)-C^m_0(s) \twp_x(\cdot-cs)v^m_0(s)\big)\Big) ds \\
& \quad - \frac{1}{2}(C^m_0)^2(t)\twp_{xx}(\cdot-ct) + C^m_1(t) \twp_x(\cdot-ct) \\
& \quad - \frac{1}{2} \int_0^t (C^m_0)^2(s)  P_{t,s} (L_s+c\partial_x) \twp_{xx}(\cdot-cs) ds.
\end{align*}
Using the convergence of $C^m_0, C^m_1$, and $v^m_0$, and the fact that by It\^{o}'s Lemma
\begin{align*}
& - \frac{1}{2}C_0^2(t)\twp_{xx}(\cdot-ct)- \frac{1}{2} \int_0^t C_0^2(s)  P_{t,s} (L_s+c\partial_x) \twp_{xx}(\cdot-cs) ds\\
& =  - \frac{1}{2}C_0^2(t)P_{t,t}\twp_{xx}(\cdot-ct) +\frac{1}{2} \int_0^t C_0^2(s)  \frac{d}{ds} (P_{t,s}  \twp_{xx}(\cdot-cs)) ds\\
& = - \frac{1}{2} \int_0^t P_{t,s}\twp_{xx}(\cdot-cs) dC_0^2(s), 
\end{align*}
the convergence of $v^m_1$ to $v_1$ follows.
\qquad\endproof

\section{Asymptotic behavior of $\twp_x$, $\psi$, and $\rho$}
\label{section:asymptoticbehavior}

In all of this section we assume exponential synaptic decay, i.e. $w(x) = \frac{1}{2\sigma}e^{-\frac{|x|}{\sigma}}$ for some $\sigma > 0$. 
We will show that Assumption \ref{ass:rho}  on the density $\rho$ is satisfied for this choice of kernel.
To this end, we analyze the asymptotic behavior of $\twp_x$, $\psi$, and $\rho$.

We assume that there exist $z_1\leq z_2$ such that $F''(x) \geq 0$ for $x \leq z_1$, $F''(x) \leq 0$ for $x \geq z_2$.

Set $\phi(x) = w\ast\psi(x)$.
\begin{lemma}
\label{lemma:signpsix}
\begin{enumerate}[(i)]
\item There exist $y_1<y_2$ such that $\twp_{xx}(x)\geq0$ for $x < y_1$ and $\twp_{xx}(x)<0$ for $x > y_2$.
\item There exist $\tilde{y}_1<\tilde{y}_2$ such that $\phi_x(x) > 0$ for $ x<\tilde{y}_1$ and $\phi_x(x)<0$ for $x>\tilde{y}_2$.
\item For all $x\leq \bar{y}_1:=\min (\twp^{-1}(z_1), y_1, \tilde{y}_1)$, $\psi_x(x) \geq 0$, while for all $x \geq \bar{y}_2:=\max(\twp^{-1}(z_2), y_2, \tilde{y}_2)$, $\psi_x(x) \leq 0$.
\end{enumerate}
\end{lemma}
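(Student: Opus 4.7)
The strategy exploits that under exponential synaptic decay, $w$ is the Green's function of $1-\sigma^2\partial_x^2$, so applying this operator converts each nonlocal equation for $\twp$, $\phi=w*\psi$, and $\psi$ into a third-order ODE. All three claims then reduce to analyzing the characteristic cubics obtained by linearising at the fixed points $a_1,a_2$.

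For (i), applying $(1-\sigma^2\partial_x^2)$ to the TW equation $c\twp_x=\twp-w*F(\twp)$ gives
\[
c\sigma^2\twp_{xxx}=\sigma^2\twp_{xx}+c\twp_x-\twp+F(\twp).
\]
Linearising at $a_i$ produces the cubic $p_i(\lambda)=c\sigma^2\lambda^3-\sigma^2\lambda^2-c\lambda+(1-F'(a_i))$; since $p_i(0)>0$, $p_i(1/c)=-F'(a_i)<0$, and $p_i(+\infty)=+\infty$, it has two positive and one negative real root. Standard ODE asymptotics (stable/unstable manifold theory applied to the first-order system with exponentially decaying coefficient perturbations) yield $\twp-a_1\sim Ce^{\mu x}$ at $-\infty$ ($C,\mu>0$) and $\twp-a_2\sim -C'e^{-\nu x}$ at $+\infty$ ($C',\nu>0$); differentiating twice gives $\twp_{xx}>0$ near $-\infty$ and $\twp_{xx}<0$ near $+\infty$, producing $y_1,y_2$. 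Part (ii) is the analogous argument for the linear ODE $c\sigma^2\phi_{xxx}+\sigma^2\phi_{xx}-c\phi_x-(1-F'(\twp))\phi=0$ obtained by using $\psi=(1-\sigma^2\partial_x^2)\phi$ in the adjoint equation $c\psi_x+\psi=F'(\twp)\phi$; the corresponding cubic has one positive and two negative real roots, and since $\phi>0$ throughout, the signs of $\phi_x$ near $\pm\infty$ are fixed.

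For (iii), differentiating $c\psi_x+\psi=F'(\twp)\phi$ yields the first-order linear ODE
\[
c(\psi_x)_x+\psi_x = G(x), \qquad G:=F''(\twp)\twp_x\phi+F'(\twp)\phi_x.
\]
On $(-\infty,\bar y_1]$ the hypothesis on $F''$ combined with (i), (ii) and positivity of $\twp_x,\phi,F'(\twp)$ gives $G\ge 0$. Solving via integrating factor $e^{x/c}$ with the boundary condition $\psi_x\to 0$ at $-\infty$ (which follows from $\psi,\phi\to 0$) yields
\[
\psi_x(x)=\tfrac{1}{c}\int_{-\infty}^x e^{(s-x)/c}G(s)\,ds \ge 0.
\]
On $[\bar y_2,\infty)$ the analogous signs give $G\le 0$. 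Here the direct formula from $-\infty$ mixes signs, so I argue by maximum principle: any interior positive maximum $x_1>\bar y_2$ of $\psi_x$ would satisfy $(\psi_x)_x(x_1)=0$, and the ODE would then force $\psi_x(x_1)=G(x_1)\le 0$, contradicting positivity. Together with the strict asymptotic negativity $\psi_x\sim -C\mu e^{-\mu x}<0$ (from the cubic analysis of (ii), since $\psi$ inherits the same decay rates as $\phi$), this excludes $\psi_x>0$ anywhere on $[\bar y_2,\infty)$.

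The principal obstacles are twofold: (a) making the leading-order asymptotic expansions $\twp-a_i,\phi,\psi\sim Ce^{\lambda x}$ rigorous, which I would handle by invoking stable/unstable manifold theorems for third-order nonautonomous ODEs whose coefficients converge exponentially to their limits (a standard reduction via the fact that $\twp-a_i$ itself decays exponentially); and (b) in (iii), the delicate boundary case $x_1=\bar y_2$ in the max-principle argument. The latter is excluded by observing that wherever $\psi_x>0$ and $G\le 0$ the ODE gives $(\psi_x)_x<0$, so once $\psi_x$ becomes non-positive (as it must by the asymptotic) it cannot re-enter the positive region without $G$ vanishing at the crossing; the strict sign conditions on $F''$ and $\phi_x$ beyond $\bar y_2$ rule this out.
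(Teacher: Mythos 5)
Your proof reaches the right conclusions, and part (iii) is essentially the paper's argument in disguise: the paper also differentiates $\psi+c\psi_x=F'(\twp)\phi$ to get $\psi_x+c\psi_{xx}=g$ with $g>0$ left of $\bar y_1$ and $g<0$ right of $\bar y_2$, and then argues by excluding local extrema of $\psi$; your integrating-factor representation $\psi_x(x)=\tfrac1c\int_{-\infty}^x e^{(s-x)/c}G(s)\,ds$ is in fact a cleaner way to handle the left half-line (it mirrors the representation $\psi(x)=\int_0^\infty e^{-s}F'(\twp(x-cs))\,w\ast\psi(x-cs)\,ds$ used elsewhere in the paper). Where you genuinely diverge is in (i) and (ii). The paper never linearises at the fixed points: for (i) it first shows $\sigma^2\twp_{xx}\le \twp-F(\twp)<0$ beyond $\twp^{-1}(a)$ directly from $(1-\sigma^2\Delta)\int_0^\infty e^{-s}w\ast F(\twp)(\cdot+cs)\,ds\ge F(\twp)$, and then rules out a negative dip of $\twp_{xx}$ between two zeros in the region $\{F'(\twp)<1\}$ by a sign bookkeeping in the identity $\twp-\sigma^2\twp_{xx}-c\twp_x+c\sigma^2\twp_{xxx}=F(\twp)$; for (ii) it excludes a positive local maximum of $\phi_x$ on $\{F'(\twp)<1\}$ by the same device. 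This is entirely elementary and needs no asymptotic expansion. Your route via the characteristic cubics is consistent (your cubics are, after rescaling $\lambda=x/\sigma$, exactly the polynomials $f_i(x,c)=cx^3+\sigma x^2-cx-\delta_i\sigma$ whose roots reappear in Theorem \ref{thm:asympbeh}), but it buys the conclusion at the price of real machinery you only gesture at: for (ii) the equation for $\phi$ is linear but \emph{nonautonomous}, so you need a Levinson-type asymptotic integration theorem (with the $L^1$ perturbation condition supplied by the exponential convergence of $F'(\twp)$) to get a two-derivative leading-order expansion with a nonvanishing coefficient, and you must also rule out $c=0$ separately since your evaluation at $\lambda=1/c$ degenerates. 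The paper deliberately avoids all of this.

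One point to be careful about in (iii): your maximum-principle argument, like the paper's, cleanly excludes re-entry of $\psi_x$ into the positive region and excludes interior positive maxima, but neither argument rules out the scenario in which $\psi_x(\bar y_2)>0$ and $\psi_x$ simply decreases monotonically through zero on an initial segment of $[\bar y_2,\infty)$ (your "once $\psi_x$ becomes non-positive it cannot re-enter" observation does not address the segment before it first becomes non-positive). This only affects the precise value of the threshold, not its existence, and the later applications (Proposition \ref{prop:asympbehrho}) only need $\psi$ to be eventually monotone; but if you want the statement with $\bar y_2$ exactly as defined you should either supply a reason that $\psi_x(\bar y_2)\le 0$ or state the conclusion with a possibly larger threshold.
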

\begin{proof}
(i) 
We have
	\begin{align*}
		\twp(x) - \sigma^2 \twp_{xx}(x) & = (1-\sigma^2 \Delta ) \int_0^{\infty} e^{-s} w\ast F(\twp)(x+cs) ds \\
		& = \int_0^{\infty} e^{-s} F(\twp(x+cs)) ds \geq F(\twp(x)),
	\end{align*}
	which implies that $\sigma^2 \twp_{xx} \leq \twp - F(\twp) < 0$ for $x > \twp^{-1}(a)$.
	
	Let $b_1 = \min \left\{ x: F'(x) \geq 1 \right\}$.
	Assume that there exist $x_1 < x_2 < \twp^{-1}(b_1)$ such that
	$\twp_{xx}(x_1)=0, \twp_{xx}(x_2) = 0$ and $\twp_{xx}(x) < 0 $ for $x_1<x<x_2$.
	We have
	\begin{align*}
		 \twp-\sigma^2\twp_{xx}-c\twp_x+c\sigma^2\twp_{xxx}
		& = (1-\sigma^2 \Delta) (\twp - c\twp_x) \\
		&= (1-\sigma^2 \Delta) w\ast F(\twp) = F(\twp)
	\end{align*}
	and therefore
	\begin{align*}
		0 & = \sigma^2 (\twp_{xx}(x_2) - \twp_{xx}(x_1))  = \underbrace{c \sigma^2 (\twp_{xxx}(x_2)-\twp_{xxx}(x_1))}_{\geq 0} - \underbrace{ c (\twp_x(x_2) - \twp_x(x_1))}_{ < 0} \\
		&\qquad {}  + \underbrace{\twp(x_2) - F(\twp(x_2)) - (\twp(x_1)- F(\twp(x_1))) }_{= \int_{x_1}^{x_2} (1-F'(\twp(x)))\twp_x(x) dx > 0} > 0,
	\end{align*}
	which is a contradiction. 
Thus, since $\twp_x > 0$ implies that $\twp_{xx}(x) > 0$ for arbitrarily small $x$, the claim follows.

(ii): $\phi$ satisfies 
\begin{equation}
\label{eq:phi}
\begin{split}
F'(\twp)\phi  & = (1+c\partial_x) \psi = (1+c\partial_x) (1-\sigma^2\Delta) \phi \\
& = \phi + c\phi_x - \sigma^2 \phi_{xx} -c\sigma^2 \phi_{xxx}.
\end{split}
\end{equation}
There exist $z'_1 < z'_2$ such that $F'(\twp(x))<1$ for all $x\leq z'_1$ and $x\geq z'_2$.
Since $\int_{-\infty}^x\phi_x(y) dy = \phi(x)>0$ for all $x$, there exist arbitrarily small $z$ such that $\phi_x(z)>0$.
Analogously, since $\int_x^{\infty} \phi_x(y) dy = -\phi(x)<0$, there exist arbitrarily large $z$ such that $\phi_x(z)<0$.
We show that there exists no positive local maximum of $\phi_x$ on $\{{F'(\twp)<1}\}$. Then (ii) follows.

So assume there exists $x_0$ such that $F'(\twp(x_0)) < 1$ and $\phi_x$ attains a positive local maximum at $x_0$. Then, using (\ref{eq:phi}),
\[0 < (1-F'(\twp(x_0)))\phi(x_0) = \underbrace{-c\phi_x(x_0)}_{<0} + \underbrace{\sigma^2 \phi_{xx}(x_0)}_{=0} + c\sigma^2 \underbrace{\phi_{xxx}(x_0)}_{\leq 0} < 0,\]
which is a contradiction.

(iii): $\psi$ satisfies $\psi+c\psi_x = F'(\twp) \phi$.  Differentiating we obtain
\[\psi_x(x) + c\psi_{xx}(x) = F''(\twp(x)) \twp_x(x) \phi(x) + F'(\twp(x)) \phi_x(x) =:g(x).\]
For $x \leq \bar{y}_1$, $g(x) >0$ and for $x \geq \bar{y}_2$, $g(x) < 0$. Thus, $\psi$ does not attain a local maximum on $(-\infty, x_1)$, nor a local minimum on $(x_2,\infty)$.
Since $\psi>0$ there exist arbitrarily small $x$ such that $\psi_x(x)>0$ and arbitrarily large $x$ such that $\psi_x(x) <0$, and the claim follows.
\qquad\end{proof}

Let $\delta_1 = 1- \lim_{x\rightarrow -\infty} F'(\twp(x))$, $\delta_2 = 1-\lim_{x\rightarrow\infty} F'(\twp(x))$.

\begin{theorem}
\label{thm:asympbeh}
Let $\epsilon>0$. There exist $x_1(\epsilon) < x_2(\epsilon)$ and $\sqrt{\delta_1} <\tilde{\delta}_1(c) < 1, \sqrt{\delta}_2 < \tilde{\delta}_2(c) < 1$, such that for all $x\leq x_1, y >0$,
\begin{align*}
	{} & \twp_x(x) \leq e^{\frac{\sqrt{\delta_1}+\epsilon}{\sigma} y} \twp_x(x-y), \\
	e^{\frac{\tilde{\delta}_1(c)-\epsilon}{\sigma}y}\phi(x-y) \leq & \phi(x)  \leq e^{\frac{\tilde{\delta}_1(c)+\epsilon}{\sigma}y}\phi(x-y)
\end{align*}
and for all $x\geq x_2, y>0$,
\begin{align*}
	e^{-\frac{\tilde{\delta}_2(c)+\epsilon}{\sigma} y}\twp_x(x) \leq & \twp_x(x+y) \leq e^{-\frac{\tilde{\delta}_2(c) - \epsilon}{\sigma}y} \twp_x(x) \\
	e^{-\frac{\sqrt{\delta_2}+\epsilon}{\sigma}y} \phi(x) \leq & \phi(x+y) & {}.
\end{align*}
For $i=1,2$, $\tilde{\delta}_i(c)$ is the unique positive root of $f_i(x, c) = c x^3 + \sigma x^2 -cx - \delta_i c$, and
is increasing in $c$ with $\tilde{\delta}_i(0)=\sqrt{\delta_i}$ and $\lim_{c\rightarrow\infty} \tilde{\delta}_i(c) = 1$.
\end{theorem}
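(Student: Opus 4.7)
The plan is to convert the nonlocal integral equations for $\twp_x$ and $\phi$ into local third-order linear ODEs, identify the characteristic polynomials of their limiting constant-coefficient equations at $\pm\infty$, and argue that the nonlocal convolution with $w$ selects the \emph{slow} positive root (lying strictly below $1/\sigma$) as the true decay rate.

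First I would derive the local equations. The identity $\phi + c\phi_x - \sigma^2\phi_{xx} - c\sigma^2\phi_{xxx} = F'(\twp)\phi$ was already noted in the proof of Lemma~\ref{lemma:signpsix}. Differentiating the traveling wave equation $\twp - c\twp_x = w\ast F(\twp)$ once and applying $(1-\sigma^2\Delta)$ yields the analogous $c\sigma^2 \twp_{xxxx} - \sigma^2\twp_{xxx} - c\twp_{xx} + (1-F'(\twp))\twp_x = 0$. Since $F'(\twp(x)) \to 1-\delta_1$ as $x \to -\infty$ and $\to 1-\delta_2$ as $x \to +\infty$, the limiting constant-coefficient equations have characteristic polynomials $p_i^{TW}(\lambda) = c\sigma^2\lambda^3 - \sigma^2\lambda^2 - c\lambda + \delta_i$ for $\twp_x$ and $p_i^{\phi}(\nu) = c\sigma^2\nu^3 + \sigma^2\nu^2 - c\nu - \delta_i$ for $\phi$; after the substitution $x = \sigma\nu$, $p_i^{\phi}$ becomes a scalar multiple of $f_i(x,c)$.

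A direct sign analysis shows that $p_i^{\phi}$ has a unique positive root $\nu_i^{\phi} \in (\sqrt{\delta_i}/\sigma, 1/\sigma)$, while $p_i^{TW}$ has two positive roots: a smaller one $\lambda_i^{TW} \in (0,\sqrt{\delta_i}/\sigma)$ and a larger one exceeding $1/\sigma$. The key observation is that only exponential rates inside $(-1/\sigma, 1/\sigma)$ can govern asymptotic decay, because $w\ast e^{\lambda\cdot}$ is finite only for $|\lambda|\sigma < 1$; any component with $|\lambda|\sigma > 1$ would produce an unmatched $e^{\pm x/\sigma}$ tail in the convolution. Concretely, assuming $\twp_x(x) \sim B e^{\lambda x}$ with $\lambda > 1/\sigma$ as $x \to -\infty$ and splitting the integral $w\ast(F'(\twp)\twp_x)(x)$ at $y = x$, the far-field part contributes an unavoidable $\frac{K}{2\sigma} e^{x/\sigma}$ with $K = \int e^{-y/\sigma} F'(\twp(y))\twp_x(y)\, dy > 0$, which decays strictly slower than $e^{\lambda x}$ and therefore cannot be balanced by the left-hand side $\twp_x - c\twp_{xx} \sim B(1-c\lambda) e^{\lambda x}$. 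This contradiction rules out the fast root; the surviving slow root gives the one-sided upper bound $\sqrt{\delta_i}/\sigma$ for $\twp_x$ at $-\infty$ (and symmetrically for $\phi$ at $+\infty$), while for $\phi$ at $-\infty$ and $\twp_x$ at $+\infty$ the uniqueness of the positive root of $p_i^{\phi}$ inside $(0,1/\sigma)$ yields the two-sided asymptotic rate $\tilde{\delta}_i(c)/\sigma$.

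The properties of $\tilde{\delta}_i(c)$ follow by elementary polynomial analysis: the bounds $\sqrt{\delta_i} < \tilde{\delta}_i(c) < 1$ come from $f_i(\sqrt{\delta_i},c) < 0 < f_i(1,c)$; the limits $\tilde{\delta}_i(0) = \sqrt{\delta_i}$ and $\lim_{c\to\infty}\tilde{\delta}_i(c) = 1$ are immediate from inspection of $f_i$ at $c=0$ and after dividing by $c$ and sending $c\to\infty$; monotonicity follows by implicit differentiation, with the positivity of $\partial_x f_i(\tilde{\delta}_i,c)$ coming from the fact that $\tilde{\delta}_i$ lies to the right of the unique positive critical point of $f_i(\cdot,c)$ (the region where $f_i$ is increasing). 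The main obstacle will be the exclusion of the fast mode in the previous paragraph: this root is a perfectly valid solution of the \emph{local} limiting ODE, and its impossibility is due entirely to the nonlocal convolution structure, which imposes a minimum effective decay rate of $1/\sigma$ through its tails. Turning the formal asymptotic matching into a clean contradiction requires uniform control of the error between the putative asymptotic $Be^{\lambda x}$ and the actual $\twp_x$ on a half-line $(-\infty, x_0]$, which I expect to accomplish via a bootstrap/fixed-point argument in a weighted sup-norm space.
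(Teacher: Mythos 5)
Your route is genuinely different from the paper's: you localize the convolution equations into third-order linear ODEs, read off the limiting characteristic polynomials, and argue that the convolution tails of $w$ forbid decay rates exceeding $1/\sigma$, so that the surviving root of $f_i$ must be the true rate. The paper never passes to a characteristic-equation/asymptotic-matching picture. Instead it works directly with the logarithmic-derivative ratios $\sigma^2\twp_{xxx}/\twp_x$ and $\sigma^2\phi_{xx}/\phi$, uses the sign information of Lemma \ref{lemma:signpsix} to discard part of the integral $\int_0^\infty e^{-s}F'(\twp(x\mp cs))\,\cdot\,ds$, and then runs a self-improving iteration: starting from the a priori bound $|\phi_x|\leq\phi/\sigma$ (the same $1/\sigma$ threshold you extract from the kernel tails), each two-point bound on $\phi$ is fed back into the integral to produce a better bound $\delta_i^{(n+1)}=1-(1-\delta_i)\frac{\sigma}{\sigma+\sqrt{\smash[b]{\delta_i^{(n)}}}\,c}$, and the monotone limit of this iteration is precisely the positive root of $f_i$. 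The cubic thus appears as a fixed point of an iteration on differential inequalities, not as a characteristic polynomial. Your identification of the cubic, the root locations, and the properties of $\tilde{\delta}_i(c)$ (including the monotonicity argument via implicit differentiation and the sign of $\partial_x f_i$ at the root) all check out and agree with the paper.

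However, there is a genuine gap at the step you yourself flag as the main obstacle, and it is larger than you suggest. First, the mode-exclusion contradiction presupposes that $\twp_x(x)\sim Be^{\lambda x}$ for a \emph{single} characteristic rate $\lambda$; establishing that the solution has a pure exponential asymptotic at all (rather than mixed-mode, oscillatory, or non-exponential behavior) is the entire analytical content of the theorem, and the promised ``bootstrap/fixed-point argument in a weighted sup-norm space'' is exactly where all the work lives. Second, and independently of that, the theorem's conclusion is a two-point inequality valid for \emph{all} $y>0$, including $y\downarrow 0$. A multiplicative asymptotic $\phi(x)=Ae^{\nu x}(1+o(1))$ only yields $\phi(x)\leq C e^{\nu y}\phi(x-y)$ with some $C>1$, and $Ce^{\nu y}\leq e^{(\nu+\epsilon/\sigma)y}$ fails for small $y$. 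What is actually needed is a pointwise bound on the logarithmic derivative, $\phi_x/\phi\leq(\tilde{\delta}_1(c)+\epsilon)/\sigma$ on a half-line, which one then integrates; this is what the paper's comparison argument (differentiating $\twp_{xx}^2-\frac{\delta_1+\epsilon}{\sigma^2}\twp_x^2$, and its analogues) delivers directly. So your plan must be upgraded to control $\twp_{xx}/\twp_x$ and $\phi_x/\phi$, not just the functions themselves; once you do that, you are in effect reconstructing the paper's iteration.
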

\begin{proof}
Let $\bar{y}_1,\bar{y}_2$ be as in Lemma \ref{lemma:signpsix}.
Note that
\begin{align*}
	\frac{\sigma^2\twp_{xxx}(x)}{\twp_x(x)} 
	& = 1-\frac{(I - \sigma^2 \Delta) \twp_x(x)}{\twp_x(x)} \\
	& = 1 - \frac{(I-\sigma^2 \Delta) \int_0^{\infty} e^{-s} w\ast(F'(\twp) \twp_x)(x+cs) ds}{\twp_x(x)} \\
	& = 1 - \frac{\int_0^{\infty} e^{-s} F'(\twp(x+cs))\twp_x(x+cs) ds}{\twp_x(x)},
\end{align*}
and since $F'(\twp)\phi = \psi + c\psi_x = (I+c\partial_x)(I-\sigma^2\Delta) \phi$,
\begin{align*}
	\frac{\sigma^2 \phi_{xx}(x)}{\phi(x)}
	& = 1 - \frac{(I+c\partial_x)^{-1}( F'(\twp)\phi)(x)}{ \phi(x) } \\
	& = 1 - \frac{\int_0^{\infty} e^{-s} F'(\twp(x-cs)) \phi(x-cs) ds}{\phi(x)}.
\end{align*}
So if $c=0$, then $\frac{\sigma^2 \twp_{xxx}(x)}{\twp_x(x)} = 1-F'(\twp(x))$, which converges to $\delta_1$ and $\delta_2$ for $x\rightarrow -\infty$ and $x\rightarrow\infty$, respectively.

Now assume that $c>0$. 
For $x\leq \bar{y}_1$, $\twp_{xx}(x) \geq 0$ and thus
\begin{align*}
	\frac{\sigma^2\twp_{xxx}(x)}{\twp_x(x)} 
	& \leq 1 - \int_0^{\frac{z_1-x}{c}} e^{-s} F'(\twp(x+cs))\frac{\twp_x(x+cs)}{\twp_x(x)} ds \\
	& \leq 1 - \int_0^{\frac{z_1-x}{c}} e^{-s} F'(\twp(x+cs)) ds \xrightarrow{x\rightarrow - \infty} \delta_1.
\end{align*}
Thus, there exists $x_1(\epsilon)$ such that for $x\leq x_1$,
\[\twp_{xxx}(x) \leq \frac{\delta_1+\epsilon}{\sigma^2}\twp_x(x),\]
and since
$\frac{d}{dx} (\twp_{xx}^2(x) - \frac{\delta_1+\epsilon}{\sigma^2}\twp^2_x(x)) = 2\twp_{xx}(x) (\twp_{xxx}(x) - \frac{\delta_1+\epsilon}{\sigma^2}\twp_x(x))\leq 0$
and $\lim_{x\rightarrow -\infty} (\twp_{xx}^2(x) - \frac{\delta_1+\epsilon}{\sigma^2}\twp^2_x(x)) = 0$, it follows that
$\twp_{xx}(x) \leq \frac{\sqrt{\delta_1+\epsilon}}{\sigma}\twp_x(x)$ and hence for $y>0$, $\twp_x(x) \leq e^{\frac{\sqrt{\delta_1+\epsilon}}{\sigma}y}\twp_x(x-y)$.

For $x \geq \bar{y}_2$, 
$\phi_x(x) \leq 0$ and thus
\begin{align*}
	\frac{\sigma^2\phi_{xx}(x)}{\phi(x)}
	& \leq 1 - \int_0^{\frac{x-z_2}{c}} e^{-s} F'(\twp(x-cs)) \frac{\phi(x-cs)}{\phi(x)} ds \\
	& \leq 1 - \int_0^{\frac{x-z_2}{c}} e^{-s} F'(\twp(x-cs)) ds \xrightarrow{x\rightarrow\infty} \delta_2
\end{align*}
and we obtain similarly to the above that there exists $x_2$ such that for $x\geq x_2$, $y>0$,
$\phi(x+y) \geq e^{-\frac{\sqrt{\delta_2+ \epsilon}}{\sigma}y}\phi(x)$.

Next we show that $\tilde{\delta}_1^2(c) := \lim_{x\rightarrow -\infty} \frac{\sigma^2 \phi_{xx}(x)}{\phi(x)}$ and $\tilde{\delta}_2^2(c) := \lim_{x\rightarrow \infty} \frac{\sigma^2 \twp_{xxx}(x)}{\twp_x(x)}$ exist.
Note that $|\phi_x| \leq \frac{1}{\sigma}\phi$ such that for $x \leq z_1$ and $y>0$, $\phi(x) \leq e^{\frac{1}{\sigma}y} \phi(x-y)$. It follows that for $x\leq z_1$,
\begin{align*}
	\frac{\sigma^2\phi_{xx}(x)}{\phi(x)}
	& \leq 1 - \int_0^{\infty} e^{-s} F'(\twp(x-cs)) e^{-\frac{1}{\sigma}cs} ds \\
	& \xrightarrow{x\rightarrow -\infty}  1 - (1-\delta_1) \frac{\sigma}{\sigma+c} = \frac{c}{\sigma+c} + \delta_1 \frac{\sigma}{\sigma+c}=:\delta_1^{(1)}(c),
\end{align*}
with $\delta_1 < \delta_1^{(1)}(c) <1$.
It follows that there exists $x_1$ such that for $x\leq x_1$, $y>0$,
$\phi(x) \leq e^{\frac{\sqrt{\delta_1^{(1)}(c) + \epsilon}}{\sigma}y} \phi(x-y)$.
Using this improved bound, we obtain that 
\begin{align*}
	\frac{\sigma^2\phi_{xx}(x)}{\phi(x)}
	& \leq 1 - \int_0^{\infty} e^{-s} F'(\twp(x-cs)) e^{-\frac{\sqrt{\delta_1^{(1)}(c) + \epsilon}}{\sigma}cs} ds \\
	& \xrightarrow{x\rightarrow -\infty}  1 - (1-\delta_1) \frac{\sigma}{\sigma+\sqrt{\delta_1^{(1)}(c)+\epsilon}c}=:\delta_1^{(2)}(c,\epsilon).
\end{align*}
Thus, $\limsup_{x\rightarrow - \infty}  \frac{\sigma^2\phi_{xx}(x)}{\phi(x)} \leq \delta_1^{(2)}(c,\epsilon) \xrightarrow{\epsilon\rightarrow 0} 1 - (1-\delta_1) \frac{\sigma}{\sigma+\sqrt{\delta_1^{(1)}(c)}c} =:\delta_1^{(2)}(c) $ with $\delta_1 < \delta_1^{(2)}(c) < \delta_1^{(1)}(c)$.
Iterating this procedure we obtain a decreasing sequence $\delta_1^{(n)}(c)> \delta_1$ satisfying
\[\delta_1^{(n+1)}(c) = 1 - (1-\delta_1) \frac{\sigma}{\sigma+\sqrt{\delta_1^{(n)}(c) }c}. \]
Thus, $\tilde{\delta}_1(c) := \lim_{n\rightarrow \infty} \sqrt{\delta_1^{(n)}(c)}$ satisfies
\[ c\tilde{\delta}_1^3(c) + \sigma \tilde{\delta}_1^2(c) - c\tilde{\delta}_1(c) = \delta_1 \sigma\]
and is therefore the unique positive root of $f_1(c,x) = c x^3 + \sigma x^2  -cx-\delta_1\sigma$.

On the other hand, for small enough $x$,
\[\frac{\sigma^2\phi_{xx}(x)}{\phi(x)} \geq 1 - \int_0^{\infty} e^{-s} F'(\twp(x-cs)) ds \xrightarrow{x\rightarrow -\infty} \delta_1,\]
and hence
\begin{align*}
	\frac{\sigma^2\phi_{xx}(x)}{\phi(x)} 
	& \geq 1 - \int_0^{\infty} e^{-s} F'(\twp(x-cs)) e^{-\frac{\sqrt{\delta_1-\epsilon}}{\sigma}cs}ds \\
	& \xrightarrow{x\rightarrow -\infty} 1 - (1-\delta_1) \frac{\sigma}{\sigma+ \sqrt{\delta_1-\epsilon}c}=: \delta_1'^{(1)}(c, \epsilon).
\end{align*}
Thus, $\liminf_{x\rightarrow - \infty} \frac{\sigma^2\phi_{xx}(x)}{\phi(x)} \geq \delta_1'^{(1)}(c, \epsilon) \xrightarrow{\epsilon\rightarrow 0} 1 - (1-\delta_1) \frac{\sigma}{\sigma+ \sqrt{\delta_1}c} =: \delta_1'^{(1)}(c) $ with $\delta_1 < \delta_1'^{(1)}(c) < 1$.
Iteration of the procedure yields an increasing sequence $\delta_1'^{(n)}(c)<1$ and $\delta_1'(c) := \lim_{n\rightarrow\infty} \sqrt{\delta_1'^{(n)}(c)}$ is the unique positive root of
$f_1(c,x) = c x^3 + \sigma x^2  -cx-\delta_1\sigma$. Hence, $\delta_1'(c) = \tilde{\delta}_1(c)$
and it follows that there exists $x_1$ such that for $x\leq x_1$, $y>0$,
\[e^{\frac{\tilde{\delta}_1(c)-\epsilon}{\sigma}y}\phi(x-y) \leq \phi(x)  \leq e^{\frac{\tilde{\delta}_1(c)+\epsilon}{\sigma}y}\phi(x-y)\]
Analogously, we obtain that there exists $x_2$ such that for $x \geq x_2$, $y>0$,
\[	e^{-\frac{\tilde{\delta}_2(x)+\epsilon}{\sigma} y}\twp_x(x) \leq \twp_x(x+y)  \leq e^{-\frac{\tilde{\delta}_2(c) - \epsilon}{\sigma}y} \twp_x(x), \]
where $\tilde{\delta}_2(c)$ is the unique positive root of $f_2(c,x) = c x^3 + \sigma x^2 -cx - \delta_2c$.

Since for $i=1,2$,
\[0 = \frac{d}{dc} f_i(c,\tilde{\delta}_i(c)) = \frac{\partial}{\partial c} f_i(c, \tilde{\delta}_i(c)) + \frac{d}{dc} \tilde{\delta}_i(c) \frac{\partial}{\partial x}f_i(c, \tilde{\delta}_i(c)),\]
$\frac{\partial}{\partial c} f_i(c, \tilde{\delta}_i(c)) = \tilde{\delta}_i^3(c)-\tilde{\delta}_i(c)< 0$, and $\frac{\partial}{\partial x}f(c, \tilde{\delta}_i(c)) > 0$, it follows that $ \tilde{\delta}_i(c)$ is increasing in $c$ with $\tilde{\delta}_i(0) = \sqrt{\delta_i}$ and $\lim_{c\rightarrow\infty} \tilde{\delta}_i(c) = 1$.
\qquad \end{proof}

\begin{proposition}
\label{prop:asympbehrho} 
\begin{enumerate}[(i)]
	\item There exists constants $\tilde{k}_1, \tilde{k}_2$ such that $\tilde{k}_1 \phi \leq \psi \leq \tilde{k}_2\phi$.
	\item Let $\epsilon>0$ (small enough) and let $x_1(\epsilon)<x_2(\epsilon)$ be as in Theorem \ref{thm:asympbeh}. There exist constants $k_1, k_2, k_1', k_2'$ such that for $x\leq x_1$, $y>0$, 
\[k_1 e^{\frac{\tilde{\delta}_1(c)- \sqrt{\delta_1}-2\epsilon}{\sigma}y} \rho(x-y) \leq \rho(x) \leq k_2 e^{\frac{\tilde{\delta}_1(c)+\epsilon}{\sigma}y}\rho(x-y)\]
and for $x\geq x_2$, $y>0$,
\[ k_1' e^{\frac{\tilde{\delta}_2(c)-\sqrt{\delta_2}-2\epsilon}{\sigma}y} \rho(x-y)  \leq \rho(x) \leq k_2' e^{\frac{\tilde{\delta}_2(c)+\epsilon}{\sigma}y}\rho(x-y).  \]
\end{enumerate}
\end{proposition}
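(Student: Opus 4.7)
The key observation is that the exponential kernel $w(x) = \frac{1}{2\sigma}e^{-|x|/\sigma}$ is the Green's function of $I - \sigma^2\Delta$ on $\R$, so $\phi = w \ast \psi$ is equivalent to $\psi = \phi - \sigma^2\phi_{xx}$, giving
\[ \frac{\psi(x)}{\phi(x)} = 1 - \frac{\sigma^2\phi_{xx}(x)}{\phi(x)}. \]
For (i), I would note that Theorem \ref{thm:asympbeh} already identifies $\sigma^2\phi_{xx}/\phi \to \tilde{\delta}_1(c)^2 < 1$ as $x \to -\infty$, and then run the same bootstrap iteration at $+\infty$, this time starting from the theorem's one-sided estimate $\phi(x+y) \geq e^{-(\sqrt{\delta_2}+\epsilon)y/\sigma}\phi(x)$ and using $\phi_x \leq 0$ on $[\tilde{y}_2,\infty)$ from Lemma \ref{lemma:signpsix}(ii); this yields an analogous finite limit strictly less than $1$ at $+\infty$. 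Hence $\psi/\phi$ has strictly positive limits at both $\pm\infty$, and combined with positivity and continuity of $\psi, \phi$ on $\R$ this produces the global sandwich $\tilde{k}_1\phi \leq \psi \leq \tilde{k}_2\phi$.

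For (ii), I would factor
\[ \frac{\rho(x)}{\rho(x-y)} = \frac{\psi(x)}{\psi(x-y)} \cdot \frac{\twp_x(x-y)}{\twp_x(x)} \]
and control each piece using (i) and Theorem \ref{thm:asympbeh}. In the left tail $x \leq x_1$, the two-sided theorem bound on $\phi(x)/\phi(x-y)$ together with (i) yields
\[ \frac{\tilde{k}_1}{\tilde{k}_2} e^{\frac{\tilde{\delta}_1(c)-\epsilon}{\sigma}y} \leq \frac{\psi(x)}{\psi(x-y)} \leq \frac{\tilde{k}_2}{\tilde{k}_1} e^{\frac{\tilde{\delta}_1(c)+\epsilon}{\sigma}y}; \]
the theorem directly supplies the lower bound $\twp_x(x-y)/\twp_x(x) \geq e^{-(\sqrt{\delta_1}+\epsilon)y/\sigma}$, and the matching upper bound $\twp_x(x-y)/\twp_x(x) \leq 1$ comes from Lemma \ref{lemma:signpsix}(i) after shrinking $x_1$ to $x_1 \leq \bar{y}_1$, since then $\twp_{xx} \geq 0$ on $(-\infty, x_1]$ makes $\twp_x$ nondecreasing there. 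Multiplying the four factors gives the claim for $x \leq x_1$ with $k_1 = \tilde{k}_1/\tilde{k}_2$ and $k_2 = \tilde{k}_2/\tilde{k}_1$.

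In the right tail $x \geq x_2$, the roles of $\phi$ and $\twp_x$ swap: the theorem provides two-sided control on $\twp_x(x+y)/\twp_x(x)$ but only the lower estimate $\phi(x+y)/\phi(x) \geq e^{-(\sqrt{\delta_2}+\epsilon)y/\sigma}$, and the missing upper estimate $\phi(x+y) \leq \phi(x)$ is supplied by Lemma \ref{lemma:signpsix}(ii) after enlarging $x_2$ to $x_2 \geq \tilde{y}_2$. The same algebra produces
\[ \frac{\tilde{k}_1}{\tilde{k}_2} e^{\frac{\tilde{\delta}_2(c)-\sqrt{\delta_2}-2\epsilon}{\sigma}y} \leq \frac{\rho(x+y)}{\rho(x)} \leq \frac{\tilde{k}_2}{\tilde{k}_1} e^{\frac{\tilde{\delta}_2(c)+\epsilon}{\sigma}y} \]
for $x \geq x_2$, $y > 0$, which after relabeling $x \mapsto x-y$ gives the proposition's statement in the subrange $x - y \geq x_2$. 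For the remaining range $x - y < x_2$ I would split $[x-y,x]$ at $x_2$ (and at $x_1$ if $x - y < x_1$), apply the right-tail estimate just established to $\rho(x)/\rho(x_2)$, control $\rho(x_2)/\rho(x_1)$ by multiplicative constants via continuity, positivity and compactness of $[x_1, x_2]$, and handle $\rho(x_1)/\rho(x-y)$ with the left-tail estimate of (ii); the extra constants get absorbed into enlarged $k_1', k_2'$.

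The hardest part will be the right-tail case. Theorem \ref{thm:asympbeh}'s one-sided bounds for $\phi$ and $\twp_x$ at $+\infty$ are complementary in exactly the sign needed, so the upper bound on $\phi(x+y)/\phi(x)$ must be pulled from the monotonicity in Lemma \ref{lemma:signpsix}(ii), and symmetrically the upper bound on $\twp_x(x-y)/\twp_x(x)$ in the left tail relies on Lemma \ref{lemma:signpsix}(i). Extending the right-tail estimate from $x - y \geq x_2$ to all $y > 0$ via patching across the middle region is delicate but tractable because $[x_1,x_2]$ is compact and the left-tail case will already have been proved.
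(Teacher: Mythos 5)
Part (ii) of your argument is essentially the paper's: there too the ratio $\rho(x)/\rho(x-y)$ is controlled by combining the sandwich from (i) with the tail estimates of Theorem \ref{thm:asympbeh}, the missing one-sided bounds being supplied by the monotonicity of $\twp_x$ and $\phi$ in the respective tails, and the constants $\tilde{k}_2/\tilde{k}_1$ appearing exactly as you predict. The patching across the compact middle region is done in the paper only in the subsequent corollaries, so doing it inside the proof is fine.

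Part (i), however, has a genuine gap. Your identity $\psi/\phi = 1-\sigma^2\phi_{xx}/\phi$ is correct, and the \emph{lower} bound $\tilde{k}_1\phi\leq\psi$ does follow from it, since the one-sided estimates already in the proof of Theorem \ref{thm:asympbeh} give $\limsup_{x\to\pm\infty}\sigma^2\phi_{xx}/\phi<1$. But the \emph{upper} bound $\psi\leq\tilde{k}_2\phi$ near $+\infty$ requires $\liminf_{x\to\infty}\sigma^2\phi_{xx}(x)/\phi(x)>-\infty$, and the bootstrap does not close there. The reason is structural: since $\sigma^2\phi_{xx}=\phi-\psi$ and $\psi(x)=\int_0^{\infty}e^{-s}F'(\twp(x-cs))\phi(x-cs)\,ds$, bounding $\sigma^2\phi_{xx}/\phi$ from below is \emph{literally} the statement $\limsup_{x\to\infty}\psi/\phi<\infty$ you are trying to prove. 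If you try to feed the one-sided estimate $\phi(x-cs)\leq e^{\frac{\sqrt{\delta_2}+\epsilon}{\sigma}cs}\phi(x)$ into the integral, you get $\int_0^{(x-x_2)/c}e^{-(1-\frac{(\sqrt{\delta_2}+\epsilon)c}{\sigma})s}\,ds$ plus a boundary term of order $e^{(\frac{\sqrt{\delta_2}+\epsilon}{\sigma}-\frac1c)(x-x_2)}$, both of which diverge as soon as $c\geq\sigma/(\sqrt{\delta_2}+\epsilon)$; there is no a priori bound on $c$ excluding this. The asymmetry is intrinsic: the representation for $\phi$ shifts \emph{leftward} ($x-cs$), so the iteration closes at $-\infty$ but not at $+\infty$ (dually, the one for $\twp_x$ shifts rightward and closes only at $+\infty$) — this is precisely why Theorem \ref{thm:asympbeh} states only one-sided bounds in those tails. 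The paper's mechanism for $\psi\leq\tilde{k}_2\phi$ is different and elementary: by Lemma \ref{lemma:signpsix}(iii), $\psi$ is monotone decreasing on $[\bar{y}_2,\infty)$ and increasing on $(-\infty,\bar{y}_1]$, so for, say, $x\geq\bar{y}_2+\delta$ one has $\phi(x)=w\ast\psi(x)\geq\int_{x-\delta}^{x}w(x-y)\psi(y)\,dy\geq\bigl(\int_0^{\delta}w(y)\,dy\bigr)\psi(x)$, symmetrically in the left tail, and compactness in between. You need to import this monotonicity argument (or the gradient bound $|\psi_x|\leq M\psi$, which the paper uses for the other inequality); the second-derivative identity alone will not deliver the upper half of the sandwich.
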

\begin{proof}
(i) We have
\[(1+c\partial_x) \psi_x = F''(\twp) \twp_x w\ast\psi + F'(\twp)w_x\ast\psi\]
and thus
\begin{align*}
\psi_x(x)
& =  \int_0^{\infty} e^{-s}\big( F''(\twp(x-cs))\twp_x(x-cs) w\ast\psi(x-cs)\\
& \qquad +F'(\twp(x-cs))w_x\ast\psi(x-cs) \big) ds\\
& \leq \Big( \Big\| \frac{F''(\twp)\twp_x}{F'(\twp)}\Big\|_{\infty} + \Big\|\frac{w_x}{w}\Big\|_{\infty}\Big) \psi(x).
\end{align*}
It follows that there exists $\tilde{k}_1$ such that
\[\phi = \frac{\psi+c\psi_x}{F'(\twp)} \leq \frac{1}{\tilde{k}_1} \psi.\]

Let $\bar{y}_1, \bar{y}_2$ be as in Lemma \ref{lemma:signpsix}. Fix $\delta>0$.
Then for $x \geq \bar{y}_2+\delta$, 
\[\phi(x) = w\ast\psi(x) \geq \int_{x-\delta}^x w(x-y) \psi(y) \geq \int_0^{\delta} w(y)dy \psi(x),\]
and for $x\leq \bar{y}_1-\delta$,
\[\phi(x) \geq \int_x^{x+\delta} w(x-y)  \psi(y) dy = \int_0^{\delta} w(y)dy \psi(x). \]
For $x_1-\delta\leq x \leq x_2+\delta$,
\[\phi(x) \geq \frac{\min_{x_1-\delta\leq y\leq x_2+\delta} \phi(y)}{\max_{x_1-\delta\leq y \leq x_2+\delta} \psi(y)}\psi(x),\]
and the claim follows.

(ii)
For $x\leq x_1$, $y>0$,
\[\rho(x-y) \leq \tilde{k}_2 \frac{\phi(x-y)}{\twp_x(x-y)} \leq \tilde{k}_2 e^{-\frac{\tilde{\delta}_1(c) -\sqrt{\delta_1}-2\epsilon}{\sigma}y}\frac{\phi(x)}{\twp_x(x)} \leq \frac{\tilde{k}_2}{\tilde{k}_1}  e^{-\frac{\tilde{\delta}_1(c) -\sqrt{\delta_1}-2\epsilon}{\sigma}y} \rho(x)\]
and
\[\rho(x-y) \geq \tilde{k}_1 \frac{\phi(x-y)}{\twp_x(x-y)} \geq \tilde{k}_1 e^{- \frac{\tilde{\delta}_1(c) + \epsilon}{\sigma} y}\frac{\phi(x)}{\twp_x(x)} \geq \frac{\tilde{k}_1}{\tilde{k}_2} e^{- \frac{\tilde{\delta}_1(c) + \epsilon}{\sigma} y} \rho(x).\]
For $x\geq x_2$, $y>0$,
\[\rho(x+y) \leq \tilde{k}_2 e^{\frac{\tilde{\delta}_2(c)+\epsilon}{\sigma}y} \frac{\phi(x)}{\twp_x(x)} \leq  \frac{\tilde{k}_2}{\tilde{k}_1}e^{\frac{\tilde{\delta}_2(c)+\epsilon}{\sigma}y} \rho(x),  \]
and
\[\rho(x+y) \geq \frac{\tilde{k}_1}{\tilde{k}_2} e^{\frac{\tilde{\delta}_2(c)-\sqrt{\delta_2}-2\epsilon}{\sigma}y}  \rho(x). \]
\end{proof}

\begin{corollary}
\label{lemma:boundrhot}
There exists a constant $L_{\rho}$ such that
$ \rho(x-y) \leq L_{\rho} \rho(x)$ for all $x\in\R$ and $y\geq 0$.
\end{corollary}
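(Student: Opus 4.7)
The plan is to combine the tail estimates of Proposition~\ref{prop:asympbehrho} with the continuity and strict positivity of $\rho = \psi/\twp_x$ on compact sets. First I would fix $\epsilon > 0$ small enough that both $\tilde{\delta}_1(c) - \sqrt{\delta_1} - 2\epsilon \geq 0$ and $\tilde{\delta}_2(c) - \sqrt{\delta_2} - 2\epsilon \geq 0$, which is possible whenever $c > 0$ since $\tilde{\delta}_i(c) > \sqrt{\delta_i}$ by Theorem~\ref{thm:asympbeh}. Let $x_1 < x_2$ be the corresponding cut-off points, and pick $0 < m \leq M < \infty$ with $m \leq \rho \leq M$ on the compact interval $[x_1, x_2]$.

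The argument then splits into finitely many subcases according to the positions of $x$ and $x-y$ relative to $x_1$ and $x_2$. When $x \leq x_1$, the left-tail inequality from Proposition~\ref{prop:asympbehrho}(ii) rearranges to
\[ \rho(x-y) \leq \frac{1}{k_1} e^{-\frac{\tilde{\delta}_1(c) - \sqrt{\delta_1} - 2\epsilon}{\sigma}y}\rho(x) \leq \frac{1}{k_1}\rho(x), \]
since the exponent is non-positive for $y \geq 0$. When $x \geq x_2$, the right-tail lower bound applied with base point $x_2$ gives a uniform positive lower bound on $\rho$ over $[x_2,\infty)$, and in the subcase $x - y \geq x_2$ the right-tail estimate rearranged with $x = (x-y) + y$ yields $\rho(x-y) \leq (\tilde{k}_2/\tilde{k}_1)\rho(x)$.

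For the remaining mixed subcases, where one of $x$ or $x-y$ lies in the middle interval $[x_1,x_2]$, I would bound $\rho$ at the middle point above by $M$, bound the outer point by the corresponding one-sided tail estimate applied with endpoint $x_1$ or $x_2$ as base point, and divide by the uniform lower bound on $\rho$ at the comparison point (namely $m$ in the middle, or $(\tilde{k}_1/\tilde{k}_2)\rho(x_2)$ on the right tail). Taking $L_\rho$ to be the maximum of the finitely many constants so produced completes the argument.

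The main obstacle is purely bookkeeping: one has to check in each subcase that the relevant exponential factor is non-increasing in $y$ in the direction being used, and then chase the constants. All the genuine analytic input is already contained in Proposition~\ref{prop:asympbehrho} together with the continuity and strict positivity of $\rho$.
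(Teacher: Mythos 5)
Your proposal is correct and takes essentially the same route as the paper: rearrange the tail estimates of Proposition~\ref{prop:asympbehrho} into one-sided comparisons (using that $\tilde{\delta}_i(c)-\sqrt{\delta_i}-2\epsilon\ge 0$ for $\epsilon$ small) and then chain through the compact interval $[x_1,x_2]$ via the continuity and strict positivity of $\rho$. The only bookkeeping slip is that your description of the ``remaining mixed subcases'' omits the straddling case $x-y\le x_1<x_2\le x$, in which neither point lies in $[x_1,x_2]$; it is handled by the same chaining (from $x-y$ up to $x_1$, across the middle interval, then from $x_2$ up to $x$), exactly as in the paper's proof.
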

\begin{proof}
By Proposition \ref{prop:asympbehrho}, there exist $x_1 < x_2$ and a constant $k$ such that
$\rho(x-y) \leq k \rho(x)$ for $x \leq x_1, y>0$, and $\rho(x-y) \leq k \rho(x)$ for $x-y\geq x_2, y >0$.
If $x-y \leq x_1 \leq x \leq x_2$, then
$\rho(x-y) \leq k \rho(x_1) \leq  k \frac{\rho(x_1)}{\min_{x_1\leq z \leq x_2} \rho(z)}\rho(x)$,
if $x-y\leq x_1 < x_2 \leq x$, then 
$\rho(x-y) \leq k \rho(x_1) \leq  k^2 \frac{\rho(x_1)}{\rho(x_2)}\rho(x)$,
and if $x_1 \leq x-y \leq x_2$, then
$\rho(x-y) \leq k \frac{\max_{x_1\leq z \leq x_2} \rho(z)}{\min_{x_1\leq z \leq x_2} \rho(z)}\rho(x)$.
\end{proof}

\begin{corollary}
\label{lemma:rho}
There exists a constant $K_{\rho}$ such that for all $x \in \R$,
\[\int w(x-y) \rho(y) dy \leq K_{\rho} \rho(x).\]
\end{corollary}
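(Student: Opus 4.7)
The plan is to split the integral $w\ast\rho(x)=\int_{-\infty}^{x}w(x-y)\rho(y)\,dy+\int_{x}^{\infty}w(x-y)\rho(y)\,dy$ and treat the two half-lines differently. The left half is handled immediately by Corollary \ref{lemma:boundrhot}: substituting $z=x-y\geq 0$ gives $\rho(y)=\rho(x-z)\leq L_{\rho}\rho(x)$, whence $\int_{-\infty}^{x}w(x-y)\rho(y)\,dy\leq L_{\rho}\rho(x)\int_{0}^{\infty}w(z)\,dz=\tfrac{1}{2}L_{\rho}\rho(x)$.

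For the right half the key observation is that the exponential decay rate $1/\sigma$ of $w$ strictly exceeds the growth rate of $\rho$ to the right, because Theorem \ref{thm:asympbeh} yields $\tilde{\delta}_{i}(c)<1$; I would fix $\epsilon>0$ small enough that $\tilde{\delta}_{i}(c)+\epsilon<1$ for $i=1,2$ and then distinguish three cases according to the position of $x$ relative to the transition region $[x_{1},x_{2}]$ of Proposition \ref{prop:asympbehrho}. If $x\geq x_{2}$, Proposition \ref{prop:asympbehrho}(ii) applied with base point $x$ gives $\rho(x+z)\leq k_{2}'e^{(\tilde{\delta}_{2}(c)+\epsilon)z/\sigma}\rho(x)$ for $z\geq 0$, so after substituting $z=y-x$ the integral is bounded by $\frac{k_{2}'}{2(1-\tilde{\delta}_{2}(c)-\epsilon)}\rho(x)$. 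If $x_{1}\leq x\leq x_{2}$, then $\rho(x)\geq\rho_{\min}:=\min_{x_{1}\leq z\leq x_{2}}\rho(z)>0$ and it suffices to show $\int_{x}^{\infty}w(x-y)\rho(y)\,dy$ is bounded by a constant; splitting into $[x,x_{2}]$, where $\rho$ is bounded, and $[x_{2},\infty)$, where the previous estimate (with base point $x_{2}$) applies, does the job.

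The delicate case is $x\leq x_{1}$, where I would split $[x,\infty)=[x,x_{1}]\cup[x_{1},x_{2}]\cup[x_{2},\infty)$. On $[x,x_{1}]$, Proposition \ref{prop:asympbehrho}(ii) applied at base $x$ yields $\rho(y)\leq k_{2}e^{(\tilde{\delta}_{1}(c)+\epsilon)(y-x)/\sigma}\rho(x)$, giving a bound $\frac{k_{2}}{2(1-\tilde{\delta}_{1}(c)-\epsilon)}\rho(x)$ as before. On $[x_{1},x_{2}]$, $\rho$ is bounded by some $\rho_{\max}$ and $w(x-y)\leq\frac{1}{2\sigma}e^{-(x_{1}-x)/\sigma}$ contributes an exponentially small prefactor, which must be measured against $\rho(x)$: from Proposition \ref{prop:asympbehrho}(ii) we also have the lower bound $\rho(x)\geq\frac{1}{k_{2}}e^{-(\tilde{\delta}_{1}(c)+\epsilon)(x_{1}-x)/\sigma}\rho(x_{1})$, so that the ratio picks up the factor $e^{-(1-\tilde{\delta}_{1}(c)-\epsilon)(x_{1}-x)/\sigma}\leq 1$. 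The tail $[x_{2},\infty)$ is handled analogously, using $\rho(y)\leq k_{2}'e^{(\tilde{\delta}_{2}(c)+\epsilon)(y-x_{2})/\sigma}\rho(x_{2})$, performing the $y$-integration, and absorbing the gap factor $e^{-(x_{2}-x)/\sigma}$ against the same lower bound on $\rho(x)$.

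The main obstacle is the bookkeeping of exponentials in the last case: one has to check that the combination of the $w$-decay $e^{-(x_{1}-x)/\sigma}$, the $\rho$-growth $e^{(\tilde{\delta}_{1}(c)+\epsilon)(x_{1}-x)/\sigma}$, and the inverse of the lower bound for $\rho(x)$ remains uniformly bounded as $x\to-\infty$. This reduces precisely to the strict inequality $\tilde{\delta}_{1}(c)+\epsilon<1$ supplied by Theorem \ref{thm:asympbeh}, which makes every exponential in $x_{1}-x$ nonincreasing. Taking $K_{\rho}$ to be the maximum of the finitely many constants produced in the three cases concludes the proof.
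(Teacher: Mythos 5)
Your proposal is correct and follows essentially the same route as the paper: split at the transition points $x_1(\epsilon),x_2(\epsilon)$ from Theorem \ref{thm:asympbeh}, use the exponential growth bounds of Proposition \ref{prop:asympbehrho} together with the lower bound $\rho(x)\geq k\,e^{-(\tilde{\delta}_1(c)+\epsilon)(x_1-x)/\sigma}\rho(x_1)$, and let the strict inequality $\tilde{\delta}_i(c)+\epsilon<1$ absorb all the exponentials. The only (cosmetic) difference is that you dispatch the left half-line once via Corollary \ref{lemma:boundrhot}, whereas the paper folds that bound into each of the three cases.
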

\begin{proof}
Fix $\epsilon>0$ (small enough) and let $x_1, x_2$ be as in Theorem \ref{thm:asympbeh}.
We denote by $k$ an arbitrary positive constant that may change from step to step.
By Proposition \ref{prop:asympbehrho}, we have for $x\leq x_1$,
\begin{align*}
	& w\ast\rho(x)  \\
	& \leq k \int_{-\infty}^x w(x-y) \rho(x) dy + k \int_x^{x_1}w(x-y) e^{\frac{\tilde{\delta}_1(c)+\epsilon}{\sigma}(y-x)} \rho(x) dy\\
	& \qquad {} + \int_{x_1}^{x_2} w(x-y)  dy \max_{x_1\leq y \leq x_2} \rho(y) + k \int_{x_2}^{\infty} w(x-y) e^{\frac{\tilde{\delta}_2(c)+\epsilon}{\sigma}(y-x_2)} \rho(x_2) dy\\
	& =: I_1+I_2+I_3+I_4.
\end{align*}
Clearly, $I_1 \leq k \rho(x)$. Since $\tilde{\delta}_1(c)+\epsilon < 1$, also $I_2 \leq k\rho(x)$. Note that $\rho(x) \geq k e^{-\frac{\tilde{\delta}_1(c)+ \epsilon}{ \sigma}(x_1-x)} \rho(x_1)$. As $\int_{x_1}^{x_2} w(x-y) dy = \frac{1}{2}(e^{-\frac{x_1-x}{\sigma}}-e^{-\frac{x_2-x}{\sigma}})$, it follows that 
\[I_3 \leq k e^{-\frac{1-\tilde{\delta}_1(c)-\epsilon}{\sigma}(x_1-x)} \frac{\rho(x)}{\rho(x_1)} \leq k \rho(x).\]
Since $\int_{x_2}^{\infty} w(x-y) e^{\frac{\tilde{\delta}_2(c)+\epsilon}{\sigma}(y-x_2)} dy \leq k e^{\frac{x}{\sigma}}$, we have 
\[I_4 \leq k e^{\frac{x}{\sigma}} e^{\frac{\tilde{\delta}_1(c)+\epsilon}{\sigma}(x_1-x)} \frac{\rho(x)}{\rho(x_1)} \leq k e^{\frac{x_1}{\sigma}} e^{-\frac{1-\tilde{\delta}_1(c)-\epsilon}{\sigma}(x_1-x)} \frac{\rho(x)}{\rho(x_1)} \leq k\rho(x).\]
For $x_1\leq x\leq x_2$, we obtain as above that
\begin{align*}
	w\ast\rho(x)
	& \leq k \rho(x_1) + \max_{x_1\leq y\leq x_2}\rho(y)  + k \rho(x_2) \leq k \frac{\rho(x)}{\min_{x_1\leq y\leq x_2}\rho(y)}.
\end{align*}
Finally, for $x\geq x_2$,
\begin{align*}
	 w\ast\rho(x)
	& \leq k \rho(x_1) + \max_{x_1\leq y\leq x_2}\rho(y) + k \int_{x_2}^x w(x,y) dy \rho(x)\\
	& \qquad {} + k \int_x^{\infty} w(x,y) e^{\frac{\tilde{\delta}_2(c)+\epsilon}{\sigma}(y-x)}\rho(x) dy.
\end{align*}
Noting that $\tilde{\delta}_2(c) +\epsilon<1$ and that $\rho(x) \geq k \rho(x_2)$, we see that also in this case $w\ast\rho(x) \leq k \rho(x)$, which concludes the proof.
\qquad \end{proof}

\bibliographystyle{siam}
\bibliography{C:/Users/Eva/ownCloud/bibliographies/generalreferences,C:/Users/Eva/ownCloud/bibliographies/neuromathreferences}
\end{document}